\newcommand{\usc}{\text{USC}}
\newcommand{\lsc}{\text{LSC}}
\renewcommand{\phi}{\varphi}
\renewcommand{\bar}[1]{\overline{#1}}
\newcommand{\one}{\mathds{1}}
\newcommand{\bali}{\begin{alignat*}{4}}
\newcommand{\eali}{\end{alignat*}}
\newcommand{\eq}    {\begin{equation}}
\newcommand{\eeq}  {\end   {equation}}
\newcommand{\eqy}  {\begin{eqnarray}}
\newcommand{\eeqy}{\end   {eqnarray}}
\newcommand{  \figc}{\begin{figure} \begin{center}}
\newcommand{\efigc}{\end   {center} \end{figure}}
\newcommand{\bmat}{\left( \begin{array}}
\newcommand{\emat}{\end{array} \right)}
\newcommand{\R}{\mathbb{R}}
\newcommand{\N}{\mathbb{N}}
\renewcommand{\S}{\mathbb{S}}
\renewcommand{\P}{\mathbb{P}}
\newcommand{\M}{\mathcal{M}}
\newcommand{\Z}{\mathbb{Z}}
\newcommand{\x}{\mathbf{x}}
\newcommand{\nd}{\noindent}
\numberwithin{equation}{section}
\newtheorem{theorem}{Theorem}[section]
\newtheorem{definition}[theorem]{Definition}
\newtheorem{lemma}[theorem]{Lemma}
\newtheorem{proposition}[theorem]{Proposition}
\theoremstyle{remark}
\newtheorem{remark}[theorem]{Remark}
\newcommand{\nl} {\newline}
\newcommand{\hsp}{\hspace{10pt}}
\newcommand{\bdm}{\begin{displaymath}}
\newcommand{\edm}{\end{displaymath}}
\newcommand{\benum}{\begin{enumerate}}
\newcommand{\eenum}{\end{enumerate}}
\newcommand{\pbegin}{\left\{\begin{array}{lr}}
\newcommand{\pend}{\end{array}\right.}
\newcommand{\mO}{\mathcal{O}}
\newcommand{\mW}{\mathcal{W}}
\newcommand{\mX}{\mathcal{X}}
\newcommand{\ra}{\rightarrow}
\newcommand{\ep}{\epsilon}
\renewcommand{\div}{\mbox{div}}
\newcommand{\bpiece}{\left\{\begin{array}{lr}}
\newcommand{\epiece}{\end{array}\right.}
\newcommand{\eps}{\varepsilon}
\newcommand{\dist}{\text{dist}}
\renewcommand{\L}{\mathscr{L}}
\newcommand{\X}{\mathcal{X}}
\numberwithin{equation}{section}
\begin{document}

\title{Analysis and algorithms for $\ell_p$-based semi-supervised learning on graphs}
\author{Mauricio Flores, Jeff Calder, Gilad Lerman}
\address{Department of Mathematics, University of Minnesota}
\email{mauricio.a.flores.math@gmail.com,jcalder@umn.edu,lerman@umn.edu}
\thanks{{\bf Source Code}: \href{https://github.com/mauriciofloresML/Laplacian_Lp_Graph_SSL.git}{https://github.com/mauriciofloresML/Laplacian\_Lp\_Graph\_SSL.git}\\
\indent{\bf Funding:} The authors gratefully acknowledge National Science Foundation grants 1713691, 1821266, 1830418, and a University of Minnesota Grant in Aid Award.}

\begin{abstract} 
This paper addresses theory and applications of $\ell_p$-based Laplacian regularization in semi-supervised learning. The graph $p$-Laplacian for $p>2$ has been proposed recently as a replacement for the standard ($p=2$) graph Laplacian in semi-supervised learning problems with very few labels, where Laplacian learning is degenerate. 

In the first part of the paper we prove new discrete to continuum convergence results for $p$-Laplace problems on $k$-nearest neighbor ($k$-NN) graphs, which are more commonly used in practice than random geometric graphs. Our analysis shows that, on $k$-NN graphs, the $p$-Laplacian retains information about the data distribution as $p\to \infty$ and Lipschitz learning ($p=\infty$) is sensitive to the data distribution. This situation can be contrasted with random geometric graphs, where the $p$-Laplacian \emph{forgets} the data distribution as $p\to \infty$. We also present a general framework for proving discrete to continuum convergence results in graph-based learning that only requires pointwise consistency and monotonicity.

In the second part of the paper, we develop fast algorithms for solving the variational and game-theoretic $p$-Laplace equations on weighted graphs for $p>2$.  We present several efficient and scalable algorithms for both formulations, and present numerical results on synthetic data indicating their convergence properties. Finally, we conduct extensive numerical experiments on the MNIST, FashionMNIST and EMNIST datasets that illustrate the effectiveness of the $p$-Laplacian formulation for semi-supervised learning with few labels. In particular, we find that Lipschitz learning ($p=\infty$) performs well with very few labels on $k$-NN graphs, which experimentally validates our theoretical findings that Lipschitz learning retains information about the data distribution (the unlabeled data) on $k$-NN graphs.

\end{abstract}

\maketitle

\vspace{-1cm}
\section{Introduction}

Data science problems, such as regression and classification, are pervasive in today's world, and the size of datasets is growing rapidly. In the supervised setting, data needs to be labeled, requiring substantial effort (e.g.~writing a transcript for speech recognition), or may require expert input (deciding whether a brain scan is healthy or not). In contrast, unlabeled data can often be acquired in large quantities with substantially less effort. Semi-supervised learning harnesses the additional information present in unlabeled data to improve learning tasks. This can include geometric or topological properties of unlabeled data, which can provide valuable information about where to place decisions boundaries, for instance. This can be contrasted with fully supervised algorithms, which only make use of labeled data. Fully supervised learning algorithms typically learn parameterized functions and require an abundant amount of labeled data.

A common setting within semi-supervised learning is \emph{graph-based} semi-supervised learning, which is concerned with propagating label information on graphs. Here, we are given an undirected weighted graph $G = (\mX, \mW)$, where $\mX$ are the vertices and $\mW = \{ w_{xy} \}_{x, y \in \mX}$ are nonnegative edge weights, which are chosen so that $w_{xy} \approx 1$ when $x$ is similar to $y$, and $w_{xy} \approx 0$ when $x$ and $y$ are dissimilar. Each vertex $x$ in the \emph{observation set} $\mO \subset \mX$ is assigned a label $g(x)$, where $g: \mO \ra \R^k$. In a classification problem with $k$ classes, the $i^{\rm th}$ class is usually assigned the label vector $g(x)=e_i$, where $e_i$ is the $i^{\rm th}$ standard basis vector in $\R^k$, that is, the vector with all zeros and a one in the $i^{\rm th}$ coordinate (called a ``one-hot'' vector in machine learning). The task of graph-based semi-supervised learning is to extend the labels from the observation set $\mO$ to a label function $u:\mX \to \R^k$ on the whole graph in some meaningful way. In practice, the equations that are solved for propagating labels are separable among the coordinates of $\R^k$, and the problem reduces to solving for $k$ functions $u_i:\mX \to \R$, one for each class $i=1,\dots,k$, and setting $u=(u_1,\dots,u_k)$ (this is referred to as the ``one-vs-rest'' approach in machine learning). Thus, we can, without loss of generality, focus on the scalar case $k=1$ for algorithms and analysis in this paper.

Since the problem of extending labels is \emph{a priori} an ill-posed problem (there are infinitely many solutions), one usually makes the semi-supervised \emph{smoothness assumption}, which asks that the learned labeling function $u:\mX\to \R$ should be smooth in dense regions of the graph \cite{ssl}. The smoothness assumption is often enforced by defining a functional (or regularizer) $J(u)$ that measures the \emph{smoothness} of a labeling $u:\mX \to \R$, and then minimizing $J(u)$ subject to either hard label constraints $u(x)=g(x)$ for $x\in \mO$, or a soft penalty constraint like the mean squared error in the labels. Soft penalties are useful when the labels are corrupted by noise. In this paper, we are concerned with learning problems with very few labels, in the range of one label per class, so a basic assumption we make is that the labels are clean and are not corrupted by noise.  Thus, the hard constraint is natural to impose and nothing is gained by considering a soft constraint. In fact, the soft-constraint is more likely to be ignored, when there are very few labels, unless the penalty parameter is chosen sufficiently large so that the constraint is essentially a hard one. The soft-constrained problem would be interesting to consider in the context of more moderate label rates, where noisy labels can be better tolerated. All of the techniques we discuss in this paper extend directly, with minor modifications, to problems with soft constraints.

One of the most widely used methods in semi-supervised learning is Laplacian regularization \cite{zhu2003semi}, which uses the smoothness functional
\begin{equation}\label{eq:l2energy}
J_2(u):= \frac{1}{4}\sum_{x, y \in \mX} w_{xy} ( u(x) - u(y) )^2.
\end{equation}
Minimizing $J_2$ attempts to force similar data points in dense regions of the graph to have similar labels. Minimizers of $J_2$ are graph harmonic, and solve the graph $2$-Laplace equation $\Delta^G_2 u = 0$, where
\begin{equation}\label{eq:graph2}
\Delta_2^G u(x):= \sum_{y\in \mX}w_{xy}(u(y)-u(x)).
\end{equation}
In classification, the values of $u$ are rounded to the nearest label. Laplacian regularization, and ideas based upon it, are very widely used in machine learning \cite{ando2007learning, zhou2004learning, zhou2005learning, zhou2004ranking,he2004manifold, he2006generalized, wang2013multi, xu2011efficient, yang2013saliency}, and have achieved great successes. However, it has been noted first in \cite{nadler2009semi} and later in \cite{el2016asymptotic}, that Laplacian regularization becomes ill-posed (degenerate) in problems with very few labels. We say a graph-based learning problem is ill-posed in the limit of infinite unlabeled data and finite labeled data if the sequence of learned functions does not continuously attain the labeled (e.g., boundary) data in the continuum limit. In this case, the learned function becomes nearly constant on the whole graph, with sharp spikes near the labeled data. Thus, even with a hard constraint the labels are almost entirely ignored.  See Figure \ref{f.p2}
 for a depiction of this degeneracy. In the continuum, this is merely reflecting the fact that the capacity of a point is zero in dimension $d\geq 2$ \cite{leoni2017first}.
\begin{figure}
\centering
\subfloat[$p=2$]{\label{f.p2}\includegraphics[trim=10 10 10 10, clip=true,width=.24\textwidth]{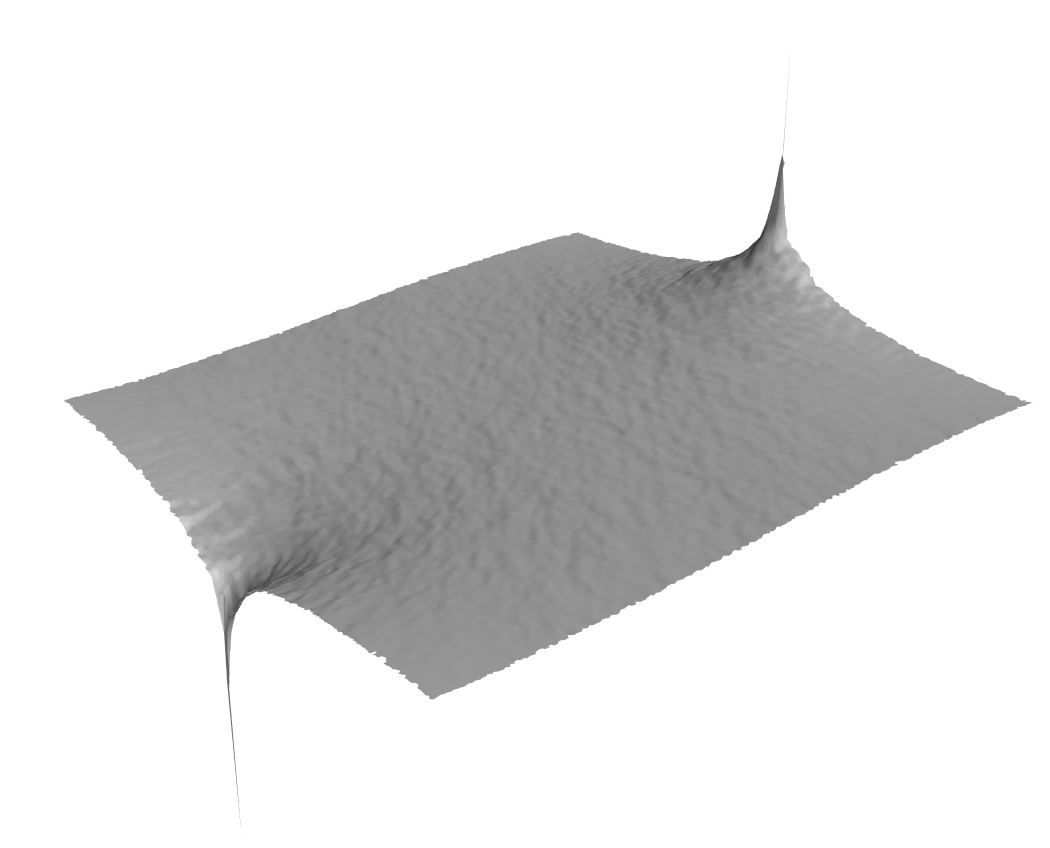}}
\subfloat[$p=2.5$]{\label{f.p25}\includegraphics[trim=10 10 10 10, clip=true,width=.24\textwidth]{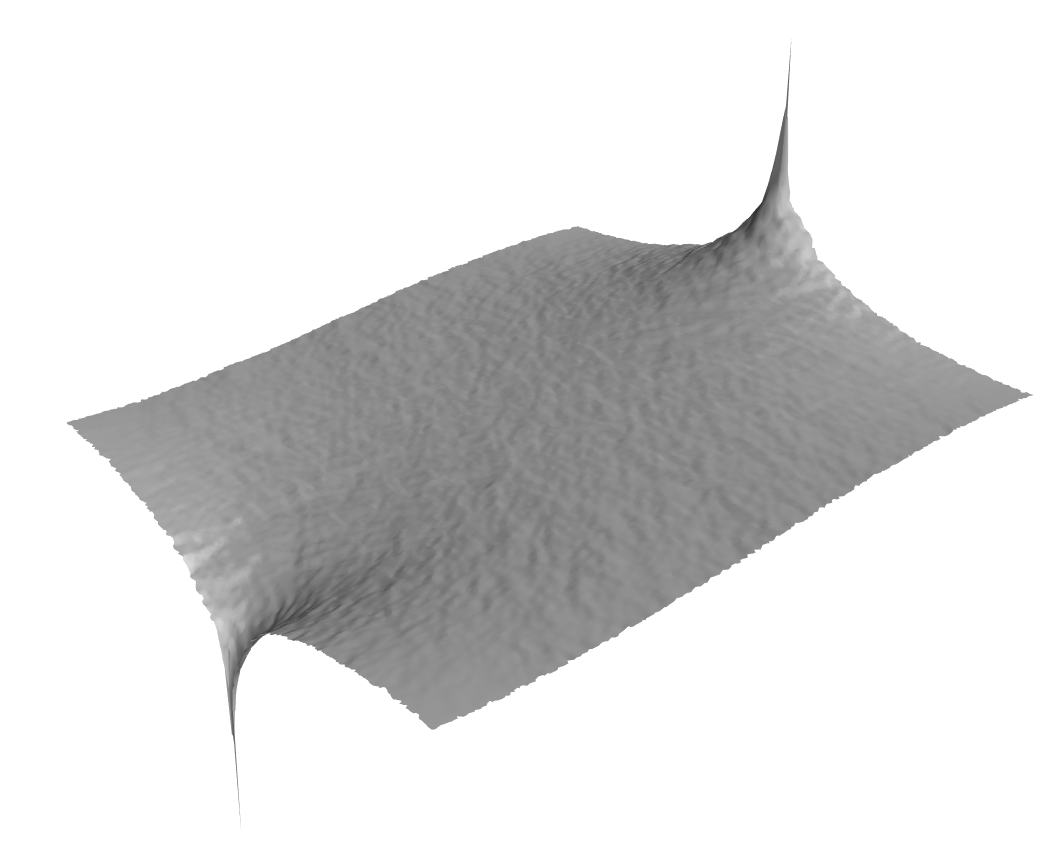}}
\subfloat[$p=3$]{\label{f.p3}\includegraphics[trim=10 10 10 10, clip=true,width=.24\textwidth]{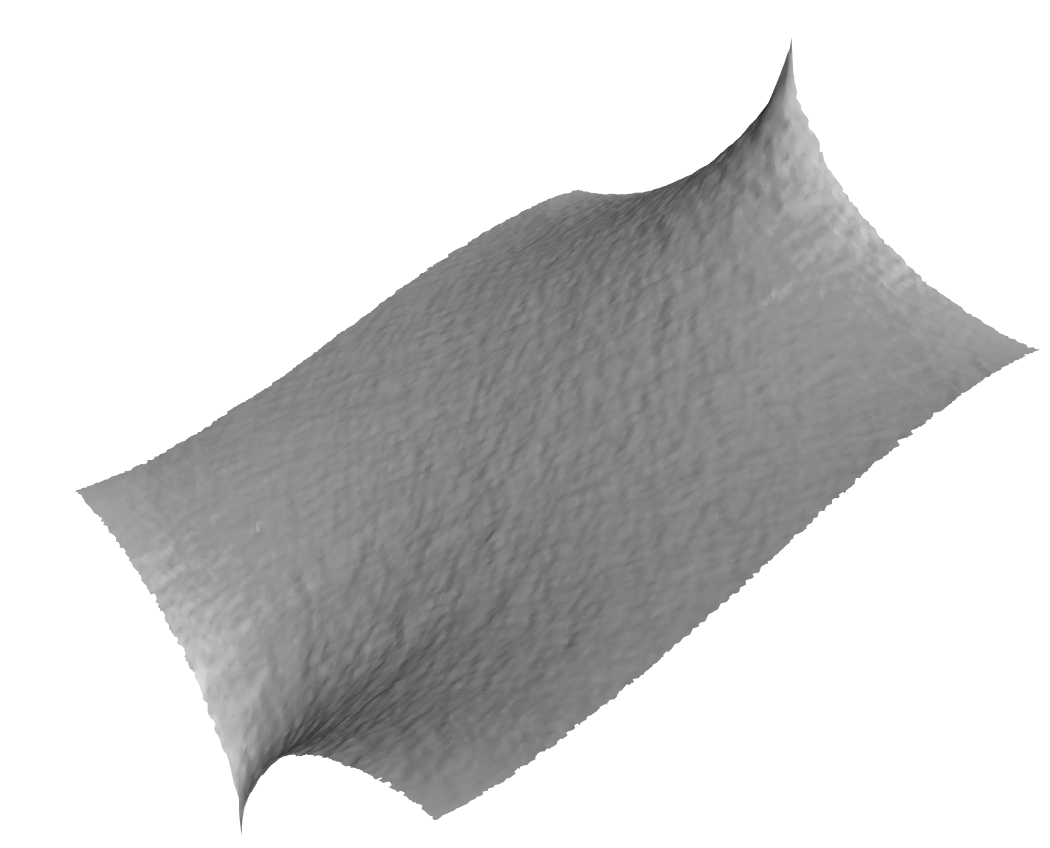}}
\subfloat[$p=10$]{\label{f.p10}\includegraphics[trim=10 10 10 10, clip=true,width=.24\textwidth]{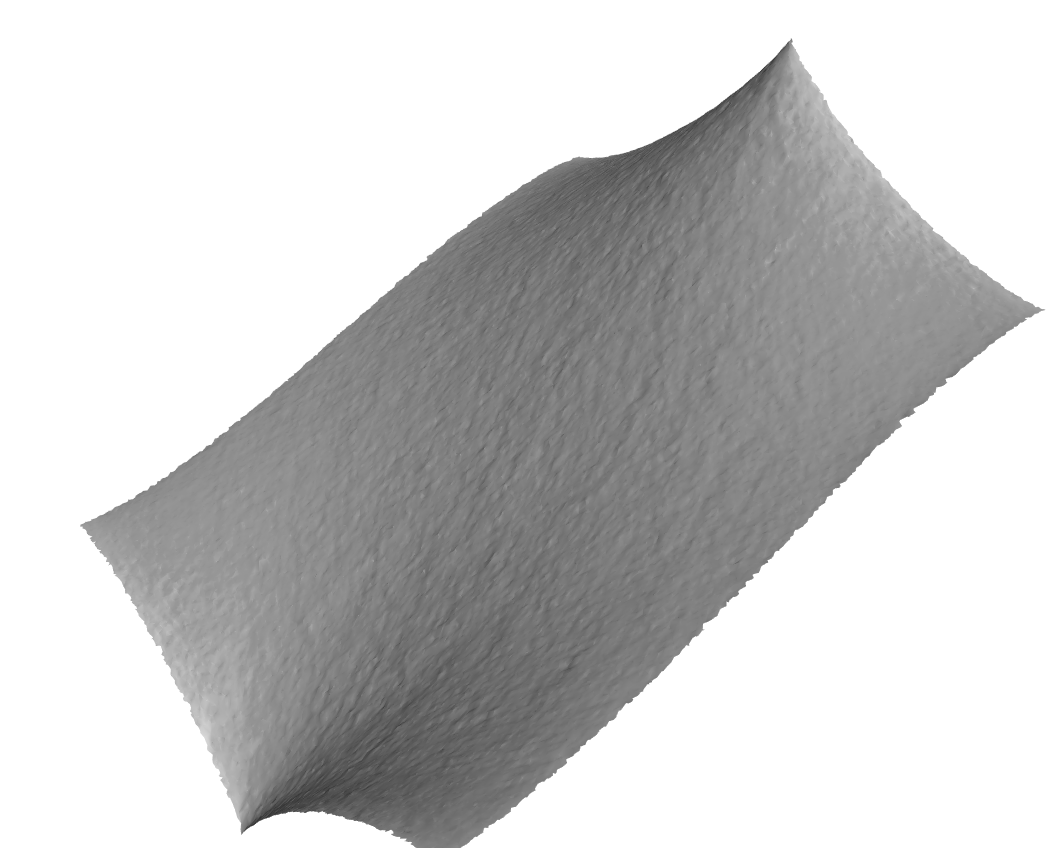}}\\
\caption{Numerical results for a toy learning problem with two labeled and $10^5$ unlabeled data points on $[0,1]^2$. For $p = 2$ the surface is nearly constant, with spikes near the labeled points, while as $p$ becomes larger the surface becomes smoother. The result for $p=\infty$ looks very similar to $p=10$, and the result does not change much for $10 \leq p < \infty$.} 
\label{fig:spikes_ill_posed}
\end{figure}

To address this issue, El Alaoui et al.~\cite{el2016asymptotic} proposed a class of $\ell_p$-based Laplacian regularizers, which use the smoothness functional
\begin{equation}\label{eq:Jp_def_intro}
J_p(u):= \frac{1}{2p}\sum_{x, y \in \mX} w_{xy} \big| u(x) - u(y) \big|^p.
\end{equation}
Choosing $p>2$ places a heavier penalty on large gradients $|u(x)-u(y)|$, which discourages the solution from developing sharp spikes. We also note that choosing $1\leq p<2$, often $p=1$, encourages the gradient $|u(x)-u(y)|$ to be sparse, and can be viewed as a relaxation of graph-cut energies. This can yield good results for classification at moderate label rates (see, e.g., \cite{jung2016semi}), but at very low label rates the issue with spikes is even more pronounced when $p<2$, and the results are similar to Laplace learning.

We note that minimizers of $J_p$ satisfy the graph $p$-Laplace equation $\Delta_p^Gu = 0$, where
\begin{equation}\label{eq:graphp}
\Delta_p^G u(x):= \sum_{y\in \mX}w_{xy}|u(x)-u(y)|^{p-2}(u(y)-u(x)).
\end{equation}
We call (\ref{eq:graphp}) the \emph{variational graph $p$-Laplacian}. Figure \ref{fig:spikes_ill_posed} depicts $\ell_p$-regularization for different values of $p$. As $p$ increases the learned function transitions more smoothly between labeled and unlabeled points. From a continuum perspective, the energy $J_p$ is related to the $p$-Dirichlet energy $\int_\Omega |\nabla u|^p\,dx$, and the Sobolev embedding $W^{1,p}(\Omega)\hookrightarrow C^{0,1-d/p}(\Omega)$ allows isolated boundary points when $p>d$, where $d$ is the dimension.\footnote{Here, $\Omega\subset \R^d$ is an open, bounded domain, and $W^{1,p}(\Omega)$ is the Sobolev space of functions $u:\Omega\to\R$ such that $\int_{\Omega}u^p + |\nabla u|^p\, dx <\infty$.} Indeed, by Morrey's inequality \cite{EvansPDE} we have
\begin{equation}\label{eq:Morrey}
|u(x)-u(y)| \leq C\left( \int_\Omega |\nabla  u|^p\, dx \right)^{1/p}|x-y|^{1-d/p}
\end{equation}
whenever $p>d$ and $|x-y|\leq \tfrac{1}{2}\text{dist}(x,\partial\Omega)$. Morrey's inequality implies that $u$ is H\"older continuous, and prevents spikes  in Figure \ref{f.p2} from occurring. 
In particular, the continuum $p$-Dirichlet problem (see \eqref{eq:contprob}) with constraints at isolated points is well-posed (e.g., admits a unique solution attaining the boundary data continuously) if and only if $p>d$. 
% When $p\leq d$, functions $u\in W^{1,p}(\Omega)$ may not be continuous, and are defined only up to sets of measure zero. In particular, the value $u(x)$ for a point $x\in \Omega$ is not well-defined when $p\leq d$, and it is impossible to specify a boundary condition $u(x)=g(x)$ at an isolated point.

The variational graph $p$-Laplacian \eqref{eq:graphp} has appeared previously in machine learning \cite{alamgir2011phase, bridle2013p,zhou2005regularization}, but was first suggested for problems with few labels in \cite{el2016asymptotic} with $p\geq d+1$. Recently, it was rigorously proven that $\ell_p$-based regularization is ill-posed (its minimizer is degenerate) for $p\leq d$, and well-posed for $p>d$ in the continuum limit of infinite unlabeled and finite labeled data \cite{slepcev2019analysis}. This justifies the continuum heuristics described above.  

Formally sending $p \to \infty$ in \eqref{eq:Jp_def_intro} one obtains \emph{Lipschitz learning}~\cite{kyng2015algorithms, luxburg2004distance}, which corresponds to the smoothness functional
\begin{equation}\label{eq:JInf}
J_\infty(u) = \max_{x,y\in \mX}w_{xy}|u(x)-u(y)|.
\end{equation}
We note that minimizers of \eqref{eq:JInf} are not unique.  To see why this is the case, note that if $w_{xy}|u(x)-u(y)| < J_\infty(u)$, then we can change the values of $u(x)$ and $u(y)$ slightly, without changing the value of $J_\infty(u)$. So minimizers cannot be unique unless the maximum gradient is attained everywhere. Among the non-unique minimizers, one generally looks for one whose gradient cannot be locally improved (i.e., made smaller by adjusting some values of $u(x)$). More precisely, in \cite{kyng2015algorithms} the authors show that there is a unique minimizer whose gradient is smallest in the lexicographical order, called the \emph{lex-minimizer}. This turns out to be equivalent to the notion of \emph{absolutely minimal}, which has been used in the partial differential equation (PDE) and analysis community to select the unique  Lipschitz extension in the continuum  \cite{aronsson2004tour}. 

Lex-minimizers of \eqref{eq:JInf} satisfy the graph $\infty$-Laplace equation $\Delta^G_\infty u= 0$ where
\begin{equation}\label{eq:graphInf}
\Delta_\infty^G u(x):= \min_{y\in \mX}w_{xy}(u(y)-u(x)) + \max_{y\in \mX}w_{xy}(u(y)-u(x)).
\end{equation}
To see why \eqref{eq:graphInf} is the correct form for the graph $\infty$-Laplacian, consider a graph $p$-harmonic function $u$, which satisfies $\Delta^G_p u(x)=0$, where $\Delta^G_p$ is defined in \eqref{eq:graphp}. We split the terms in the sum defining $\Delta^G_p u(x)$ by their sign to obtain
\[\sum_{u(y) > u(x)}w_{xy}(u(y)-u(x))^{p-1} = \sum_{u(x) > u(y)}w_{xy}(u(x)-u(y))^{p-1}.\]
Taking the $p^{\rm th}$ root of both sides and sending $p\to \infty$ yields 
\[\max_{y\sim x}(u(y) - u(x)) = \max_{y\sim x}(u(x)-u(y)),\]
where we write $y\sim x$ if $w_{xy}>0$. This can be simplified to
\[\max_{y\sim x}(u(y) - u(x)) + \min_{y\sim x}(u(y)-u(x)) = 0,\]
which is the graph $\infty$-Laplacian defined in \eqref{eq:graphInf} for an unweighted graph. To obtain the weighted graph $\infty$-Laplacian \eqref{eq:graphInf} in the limit as $p\to \infty$, we simply replace $w_{xy}$ in \eqref{eq:graphp} with $w_{xy}^p$.

It was proven in \cite{calder2019consistency} that Lipschitz learning is well-posed with arbitrarily few labels, and the continuum limit on random geometric graphs is the continuum $\infty$-Laplace equation (see \eqref{eq:infL}). Several papers (see, e.g., \cite{el2016asymptotic,calder2019consistency}) have noted that the continuum $\infty$-Laplacian \eqref{eq:infL} does not involve the data distribution, making it presumably unsuitable for semi-supervised learning, which is supposed to use properties of the unlabeled data, often through its distribution. In \cite{calder2019consistency}, it was shown how to re-weight the graph to introduce varying degrees of sensitivity to the data distribution in Lipschitz learning. 

In order to combine the well-posedness of Lipschitz learning with the distributional sensitivity of Laplacian regularization, it is natural to augment the $2$-Laplacian with a small $\infty$-Laplace term and solve an equation of the form $\Delta^G_2 u + \eps \Delta^G_\infty u = 0$. To this end, we define the \emph{game theoretic} $p$-Laplacian on the graph  (the name will be explained shortly)
\begin{equation} \label{eq:plap_game} 
\L^G_p u(x) = \frac{1}{d_x p} ~ \Delta^G_2 u(x) + \lambda\left(1-\tfrac{2}{p}\right) \Delta^G_\infty u(x), 
\end{equation}
where $d_x = \sum_{y \in \mX} w_{xy}$ is the degree of vertex $x$, and $\lambda>0$ is a constant. For semi-supervised learning with the game-theoretic $p$-Laplacian we solve $\L^G_p u =0$ subject to $u=g$ on $\mO$ (see Section \ref{sec:main} for precise definitions). The second author proved in \cite{calder2018game} that the game theoretic $p$-Laplacian is well-posed with very few labels for $p>d$, and argued for the use of this formulation as an alternative regularization for semi-supervised learning on graphs. In the context of these results, $\lambda$ is chosen (explicitly) depending on the kernel used to define the weights $w_{xy}$, in order to produce the consistency results  described below.  Since having two parameters, $p$ and $\lambda$, is redundant in practice, we take $\lambda=1$ in the numerical sections of the paper. Compared to the variational $p$-Laplacian, the game-theoretic $p$-Laplacian appears better conditioned numerically when $p$ is large, since it does not require computing large powers of $p$. Another main difference is that the game-theoretic $p$-Laplacian does not arise through an optimization problem, and so the methods for solving the equation are somewhat different. We note that the game-theoretic graph $p$-Laplacian (and similar models) have been used very recently for data clustering and learning problems \cite{elmoataz2015p,elmoataz2017game,elmoataz2017nonlocal,hafiene2018nonlocal}, though not in the context of very few labeled data points. A related definition of the game-theoretic $p$-Laplacian on graphs was also studied in \cite{manfredi2015nonlinear}. We also mention recent work \cite{shi2017weighted,calder2019properly} that approaches the semi-supervised learning problem with few labels by re-weighting the graph so that the weights $w_{xy}$ are large near labels.

Both the variational \eqref{eq:graphp} and the game-theoretic \eqref{eq:plap_game} graph $p$-Laplace equations are consistent in the continuum, on random geometric graphs, with the $p$-Laplace equation
\begin{equation}\label{eq:pLaplaceEq}
\Delta_p u:= \text{div}(|\nabla u|^{p-2}\nabla u) = 0.
\end{equation}
It is important to point out that the weights $1/p$ and $\lambda(1-2/p)$ in \eqref{eq:plap_game} are chosen precisely so that $\L^G_p$ is consistent with $\Delta_p$. The operator $\Delta_p$ is called the \emph{$p$-Laplacian}, and solutions of \eqref{eq:pLaplaceEq} are called $p$-harmonic functions \cite{lindqvist2017notes}. The $p$-Laplace equation arises as the necessary conditions (Euler-Lagrange equation) for the $p$-Dirichlet problem
\begin{equation}\label{eq:contprob}
\min_{u}\int_\Omega |\nabla u|^p \, dx.
\end{equation}
Note that we can expand the $p$-Laplacian to obtain
\[\Delta_p u = |\nabla u|^{p-2}(\Delta u + (p-2)\Delta_\infty u),\]
where $\Delta_\infty$ is the $\infty$-Laplacian,  given by
\begin{equation}\label{eq:infL}
\Delta_\infty u:= \frac{1}{|\nabla u|^2}\sum_{i,j=1}^d u_{x_ix_j}u_{x_i}u_{x_j}.
\end{equation}
Thus, any solution of $\Delta_pu=0$ also satisfies
\begin{equation}\label{eq:gameth}
\frac{1}{p}\Delta u + \left(1-\tfrac{2}{p}\right)\Delta_\infty u = 0.
\end{equation}
The left hand side in \eqref{eq:gameth} is often called the \emph{game-theoretic} or \emph{homogeneous} $p$-Laplacian since it arises in two player stochastic tug-of-war games \cite{peres2009tug,lewicka2014game}. This justifies the definition \eqref{eq:plap_game} of the game-theoretic graph $p$-Laplacian. We note that while the $p$-Laplace equation \eqref{eq:pLaplaceEq} is equivalent to the game theoretic version \eqref{eq:gameth} at the continuum level, these are different formulations at the discrete level.

Given the recent interest in graph $p$-Laplacian models in machine learning, it is important to have both strong theoretical results that are relevant in practice, and efficient and scalable algorithms for solving the equations in real-world settings.  Most of the literature on discrete to continuum convergence in graph-based learning, such as the recent work on the $p$-Laplacian \cite{slepcev2019analysis,calder2018game}, assumes the graph is a random geometric graph. However, such graphs have poor sparsity properties\footnote{As an example of the poor sparsity properties of random geometric graphs, consider the MNIST dataset, which has $70,000$ images of handwritten digits. A $k$-nearest neighbor graph on MNIST with $k=3$ neighbors is connected, while the sparsest random geometric graph that is connected requires a bandwidth of $\eps=8.5$ and has on average $294$ neighbors per image, which is roughly $100$ times less sparse than the $k$-nearest neighbor graph. Graph connectivity is one of the basic properties required by most graph-based learning algorithms.}, and practitioners almost always use some form of a  $k$-nearest neighbor ($k$-NN) graph instead. Thus, it is important to develop theory for the graph $p$-Laplacian on $k$-NN graphs in order to study the graphs that are used in practice. It turns out, looking forward to Section \ref{sec:knn}, that the discrete to continuum theory for the $p$-Laplacian on $k$-NN graphs is fundamentally different from the existing theory on random geometric graphs, and this discrepancy accounts for the results of our numerical experiments conducted later in the paper.

From a computational perspective, there are relatively few works on fast algorithms for graph $p$-Laplacians. Kyng et al., \cite{kyng2015algorithms} developed an efficient algorithm for Lipschitz learning ($p=\infty$). Their algorithm has a poor worst case complexity analysis (roughly quadratic complexity in the number of data points), but seems to run very fast in practice. Oberman \cite{oberman2013finite} considers the game-theoretic formulation on regular grids in dimensions $d=2,3$, and developed a fast semi-implicit solution method, as well as gradient-descent methods. One contribution of this paper is an adaptation of Oberman's semi-implicit method to the graph setting. Other works \cite{elmoataz2015p,elmoataz2017game,elmoataz2017nonlocal} use slow iterative methods, such as Jacobi iteration or gradient descent. It was suggested in \cite{el2016asymptotic} to use Newton's method for the variational $p$-Laplacian, but the method was not investigated in any depth. The energy $J_p$ is smooth and convex, but not strongly convex when $p>2$. Other works, such as \cite{kyng2015algorithms}, suggest to use convex programming to solve the variational $p$-Laplacian.

\subsection{Main results and contributions}
\label{sec:main}

Before summarizing the main contributions of our paper, let us give precise definitions of the problems we study. We let $G = (\mX, \mW)$ be a connected graph with vertices $\mX$ and edge weights $\mW = \{ w_{xy} \}_{x, y \in \mX}$. The subset of labeled vertices is denoted $\mO\subset \mX$, and the label function is $g:\mO\to \R$. We denote the number of data points in $\mO$ by $m$, and the number of data points in $\mX$ by $n$.  The $\ell_p$-based Laplacian regularized learning problem \cite{el2016asymptotic} is given by
\begin{equation}\label{eq:plap_optimality}
\min_{u:\mX\to \R}J_p(u) \ \ \text{ subject to }u(x) = g(x) \text{ for all }x\in \mO,
\end{equation}
where we extend the definition of $J_p$ to be
\begin{equation}\label{eq:Jp_def}
J_p(u):= \frac{1}{2p}\sum_{x, y \in \mX} w_{xy} \big| u(x) - u(y) \big|^p + \sum_{x\in \X}f(x)u(x),
\end{equation}
where $f$ is a source function. The unique minimizer $u : \mX \ra \R$ of \eqref{eq:plap_optimality} satisfies the optimality conditions 
\begin{equation}\label{eq:plap_graph}
\left\{\begin{aligned}
-\Delta^G_p u(x) &= f(x)&&\text{if }x \in \mX\setminus \mO\\ 
u(x) &=g(x)&&\text{if }x \in \mO,
\end{aligned}\right.
\end{equation}
where $\Delta^G_p$ is defined in \eqref{eq:graphp}. In machine learning applications we always take $f\equiv 0$ in \eqref{eq:Jp_def} and \eqref{eq:plap_graph}. However, it is useful to formulate the equation in more generality so that we can construct exact solutions by choosing $u(x)$ and computing $f(x)=-\Delta^G_p u(x)$ accordingly. This allows us to evaluate and compare the convergence rates of different methods. We will refer to \eqref{eq:plap_optimality} as the {\bf variational problem}, and to \eqref{eq:plap_graph} as the {\bf variational $p$-Laplace} equation. 

The game-theoretic graph $p$-Laplacian equation  is given by
\begin{equation}\label{eq:plap_graph_game}
\left\{\begin{aligned}
-\L^G_p u(x) &= f(x)&&\text{if }x \in \mX\setminus \mO\\ 
u(x) &=g(x)&&\text{if }x \in \mO,
\end{aligned}\right.
\end{equation}
where $\L^G_p$ is the game-theoretic graph $p$-Laplacian defined in \eqref{eq:plap_game}. We will refer to \eqref{eq:plap_graph_game} as the  {\bf game-theoretic problem}. As before, we are only concerned with $f\equiv 0$ in machine learning, but it is convenient to proceed in generality. The {\bf Lipschitz learning problem} corresponds to the game-theoretic problem with $p=\infty$.

We now summarize the contributions of the paper.

\subsubsection{Discrete to continuum on \texorpdfstring{$k$}-NN graphs}

In Section \ref{sec:knn}, we give a detailed analysis of discrete to continuum convergence of graph-based algorithms on $k$-nearest neighbor graphs.  We prove that the continuum limit of the game-theoretic $p$-Laplacian on \emph{symmetrized} $k$-NN graphs has a significantly different form compared to random geometric graphs. In particular, we show that the continuum operator has an additional drift term along the gradient of the data distribution that \emph{does not} vanish as $p\to \infty$. The drift term arises through the symmetrization process in the $k$-NN graph construction. This means the conventional wisdom that Lipschitz learning is not sensitive to the data distribution (see, e.g., \cite{el2016asymptotic,calder2019consistency,slepcev2019analysis}) is a phenomenon specific to random geometric graphs, and does not hold true for $k$-NN graphs, which highlights the need to perform analysis on different graph constructions. This result is also borne out in our experimental results in Section \ref{sec:experiments}, which show that the  $p=\infty$ graph Laplacian gives good results for semi-supervised learning with very few labels, indicating sensitivity to the data distribution as predicted by our theoretical results.

We also prove a discrete to continuum convergence result for general elliptic learning algorithms on graphs. The result is very general and only requires the equation to have a certain monotonicity property (essentially ellipticity) that allows for the comparison principle to be used. The results show that all elliptic semi-supervised learning algorithms are well-posed at label rates as low as $O(\eps)$, where $\eps>0$ is the bandwidth of the graph (or average distance to the $k^{\rm th}$ nearest neighbor in a $k$-NN graph). This gives a baseline for comparing algorithms at low label rates, and indicates that algorithms should only be claimed to be superior at low label rates if they are well-posed at label rates significantly less than $O(\eps)$. For example, recent work \cite{calder2019rates} by the second author shows that Laplacian regularization ($p=2$) is well-posed at label rates as low as $O(\eps^2)$, and previous work \cite{slepcev2019analysis,calder2018game} showed that the $p$-Laplacian is well-posed for arbitrarily low label rates when $p>d$. It is an open problem to determine the lowest label rates for which $p$-Laplacian regularization is well-posed for $2 < p < d$.

\subsubsection{Efficient algorithms}

We develop and study a range of algorithms for solving both the variational and game-theoretic $p$-Laplace equations, and identify the algorithms that are efficient and scalable in each setting. In each case, we conduct numerical experiments on synthetic data to measure execution time and its dependence on the intrinsic dimensionality of the graph,

For the variational $p$-Laplacian, we propose to use Newton's method with homotopy on $p$. Other solvers, such as iteratively reweighted least squares (IRLS) and a primal-dual method, were considered in \cite{flores2018algorithms}. The primal dual method is slower than Newton with homotopy, and IRLS converges only for $p < 3$, though recent work has found ways to bypass this restriction \cite{adil2019fast}. 

For the game theoretic $p$-Laplacian, we study a gradient descent-like algorithm, a new Newton-like method, and a semi-implicit method, which is an extension of Oberman's method \cite{oberman2013finite} to graphs. The Newton-like method with homotopy on $p$, and the semi-implicit method converge faster than gradient descent, but do not have provable convergence guarantees. The gradient descent-like method is slower, though provably convergent (see Theorem \ref{thm:GDconv}). 

All algorithms are presented and analyzed in Section \ref{sec:alg}, while the numerical experiments on synthetic data are presented in Section \ref{sec:experiments}

\subsubsection{Experimental study}

We conduct a thorough experimental study of $p$-Laplacian semi-supervised learning on real data, including MNIST \cite{lecun1998gradient}, Fashion MNIST \cite{xiao2017fashion}, and Extended MNIST \cite{cohen2017emnist}. In particular, we study classification problems with very few labels, and show that graph $p$-Laplacian learning with $p>2$ is superior to Laplacian regularization. Our results show that $p$-Laplacian learning becomes more accurate when provided with more unlabeled data, which confirms the semi-supervised learning paradigm. The experiments on real data are presented in  Section \ref{sec:real}.  

\subsubsection{Source code}

The code for all numerical experiments is available online\footnote{\href{https://github.com/mauriciofloresML/Laplacian_Lp_Graph_SSL.git}{https://github.com/mauriciofloresML/Laplacian\_Lp\_Graph\_SSL.git}.}, and the $p$-Laplacian semi-supervised learning algorithm is implemented in the GraphLearning Python package \cite{calder2022graphlearning}.

\section{Continuum limits on k-nearest neighbor graphs}
\label{sec:knn}

Since random geometric graphs (also called $\eps$-ball graphs) generally have poor sparsity, it is common in practice to use $k$-nearest neighbor ($k$-NN) graphs, were each point is connected to its $k$-nearest neighbors. However, there are very few discrete to continuum or consistency results for graph Laplacians on $k$-NN graphs. The only results on $k$-NN graphs that we are aware of are pointwise consistency results (without rates)  \cite{ting2011analysis}, $\Gamma$-convergence results for variational problems \cite{garcia2019variational}, and spectral convergence rates \cite{calder2019improved}.

We give here a detailed analysis of pointwise consistency and discrete to continuum convergence for graph $p$-Laplace equations on various $k$-NN and $\eps$-ball graphs (Sections \ref{sec:epsgraph}, \ref{sec:nonknn}, and \ref{sec:symknn}). After a careful study, we find that previous observations about $p$-Laplacian regularization on $\eps$-graphs do not hold on  $k$-NN graphs. In particular, previous work \cite{el2016asymptotic,calder2018game,slepcev2019analysis} has shown that $p$-Laplacian regularization \emph{forgets} the distribution of the unlabeled data as $p\to \infty$, which renders the algorithm unsuitable for semi-supervised learning.  We show in the following sections (see Remarks \ref{rem:pinf_eps}, \ref{rem:pinf_knna}, and \ref{rem:pinf_sym}) that this observation is true \emph{only} for $\eps$-ball graphs, and the situation  is completely different for symmetrized $k$-NN graphs. For symmetrized $k$-NN graphs, even Lipschitz learning ($p=\infty$) is sensitive to the distribution of unlabeled data.

We also give, in Section \ref{sec:continuum}, a general framework for proving discrete to continuum convergence results for elliptic equations on graphs. The main result, Theorem \ref{thm:D2C}, gives a general discrete to continuum convergence result for a wide class of elliptic equations on graphs, and shows that all suitable semi-supervised learning algorithms are well-posed at label rates of $O(\eps)$ and higher. This result indicates that algorithms for semi-supervised learning at low label rates should be judged by their ability to operate below the $O(\eps)$ threshold, and theoretical results should aim to establish this.

Let us remark that the pointwise consistency results in this section share some similarities, in terms of proof techniques, to the results in \cite{calder2018game} and \cite{calder2019improved}. In \cite{calder2018game}, the second author established pointwise consistency, without any rate, for the game theoretic $p$-Laplacian on $\eps$-ball graphs. In Theorem \ref{thm:Ceps} we extend this result to an $O(\eps)$ convergence rate. This is already well-known for $p=2$ (see \cite{hein2007graph}), and since the $p$-Laplacian is a convex combination of the $p=2$ and $p=\infty$ Laplacians, we quote existing results for $p=2$ (actually, we quote the $p=2$ result for $\eps$-ball graphs from \cite{calder2019improved} in the proof of Theorem \ref{thm:Ceps}, since the form of the result from \cite{calder2019improved} is simpler to use). 

For $k$-NN graphs, the only existing pointwise consistency results with rates are the $p=2$ results in the manifold setting established by the second author in \cite{calder2019improved}. The only other work we are aware of on pointwise consistency for $k$-NN graphs is \cite{ting2011analysis}, which considers $p=2$ and proves consistency without any rates. In this section, we prove pointwise consistency with linear convergence rates for the game-theoretic $p$-Laplacian on $k$-NN graphs, both symmetrized (Theorem \ref{thm:knnsympLap}) and non-symmetrized (Theorem \ref{thm:Cknna}). The analogous results for $p=2$ were established by the second author in \cite{calder2019improved}, and these results can be viewed as an extension of those to the $p$-Laplacian with $p>2$. Naturally, we make use of the  $p=2$ results from \cite{calder2019rates}, since the game-theoretic $p$-Laplacian is a convex combination of the $p=2$ and $p=\infty$ graph Laplacians. For both $k$-NN and $\eps$-ball graphs, we prove linear rates of $O(\eps)$ (where $\eps=\left(\frac{k}{n} \right)^{1/d}$ for $k$-NN graphs). For $\eps$-ball graphs and $p=2$, it is well-known that sharper $O(\eps^2)$ pointwise consistency rates are available (see, e.g.,\cite[Remark 5.26]{calderCVnotes}). It would be interesting to investigate whether these sharper rates can be extended to $k$-NN graphs.

For reference, we include here a table of notation used in this section.

\subsection*{Notation}

\begin{itemize} \addtolength{\itemsep}{3pt}
\item[$n$:] Number of vertices in the graph.
\item[$\eps$:] Graph connectivity length scale for $\eps$-ball graphs.
\item[$k$:] Number of neighbors in a $k$-NN graph.
\item[$\eta_\eps$:] Rescaled weight kernel $\eta_\eps(t) = \eta\left( \tfrac{t}{\eps} \right)$.
\item[$\X_n$:] Vertices of our graphs---an \emph{i.i.d.}~sample of size $n$ with density $\rho$ on $\Omega\subset \R^d$.
\item[$\beta$:] Bound on the density $0 < \beta \leq \rho \leq \beta^{-1}$.
\item[$r_0$:] Unique maximum of $r\mapsto r \eta(r)$.
\item[$d^{n,\eps}$:] Degrees on an $\eps$-ball graph.
\item[$N_\eps(x)$:] The number of points in the set $\X_n\cap B(x,\eps)$.
\item[$\eps_k(x)$:] The distance from $x$ to its $k^{\rm th}$ nearest neighbor in $\X_n$.
\item[$s_k(x)$:] Typical distance from $x$ to its $k^{\rm th}$ nearest neighbor (satisfies $\alpha(d)ns_k(x)^d\rho(x) = k$).
\item[$\eps_k(x,y)$:] Symmetrization of $\eps_k(x)$, given by $\eps_k(x,y) = \max\{\eps_k(x),\eps_k(y)\}$.
\item[$s_k(x,y)$:] Symmetrization of $s_k(x)$, given by $s_k(x,y) = \max\{s_k(x),s_k(y)\}$.
\item[$\L^{n,\eps}$:] Unnormalized graph Laplacian on an $\eps$-ball graph.
\item[$\L^{n,\eps}_{rw}$:] Random walk graph Laplacian on an $\eps$-ball graph.
\item[$\L^{n,k}_{a,rw}$:] Random walk graph Laplacian on a nonsymmetric $k$-NN graph.
\item[$\L^{n,k}_{s,rw}$:] Random walk graph Laplacian on a symmetric $k$-NN graph.
\item[$\L^{n,\eps}_p$:] Game-theoretic graph $p$-Laplacian on an $\eps$-ball graph.
\item[$\L^{n,k}_{a,p}$:] Game-theoretic graph $p$-Laplacian on a nonsymmetric $k$-NN graph.
\item[$\L^{n,k}_{s,p}$:] Game-theoretic graph $p$-Laplacian on a symmetric $k$-NN graph.
\end{itemize}

\subsection{Graph construction}

Some parts of the construction of the random graphs are common to $\eps$-ball graphs and $k$-NN graphs, and we review this now. Let $X_1,X_2,\dots,X_n$ be a sequence of \emph{i.i.d.}~random variables on an open connected domain $\Omega\subset \R^d$ with a probability density $\rho:\Omega \to [0,\infty)$. We assume the boundary $\partial \Omega$ is smooth, $\rho\in C^2(\bar{\Omega})$ and there exists $\beta>0$ such that
\begin{equation}\label{eq:rho}
\beta \leq \rho(x) \leq \beta^{-1}  \ \ \ \text{ for all }x\in \Omega.
\end{equation}
Let $\partial_r \Omega= \{x\in \Omega \, : \, \text{dist}(x,\partial \Omega)\leq r\}$ and $\Omega_r = \Omega\setminus \partial_r \Omega$. The vertices of our graph are 
\begin{equation}\label{eq:vertices}
\X_n :=\{X_1,X_2,\dots,X_n\}.
\end{equation}
Let $\eta:[0,\infty)\to [0,\infty)$ be smooth and nonincreasing such that $\eta(t)\geq 1$ for $0\leq t \leq \frac{1}{2}$, and $\eta(t)=0$ for $t>1$. For $\eps>0$ define $\eta_\eps(t) =\eta\left( \tfrac{t}{\eps} \right)$ and set  $\sigma_\eta = \int_{\R^n} |z_1|^2\eta(|z|)\, dz$. Since the graph is unchanged under scaling the weights by a constant, we may as well assume that 
\begin{equation}\label{eq:etaprob}
\int_{B(0,1)} \eta(z) \, dz = 1.
\end{equation}
Then, in particular, $\int_{B(0,\eps)}\eta_\eps(z)\,dz = 1$ as well. As in \cite{calder2018game}, we assume there exists $r_0\in (0,1)$ and $\theta>0$ so that
\begin{equation}\label{eq:strictmax}
r\eta(r) + \theta(r-r_0)^2 \leq r_0 \eta(r_0) \ \ \text{ for all }0 \leq r \leq 1.
\end{equation}
\begin{remark}\label{rem:strictmax}
The condition \eqref{eq:strictmax} says that $r\mapsto r\eta(r)$ has a unique maximum, at $r=r_0$, and is strongly concave near the maximum. Looking forward to the definition of the graph $\infty$-Laplacian (e.g., \eqref{eq:graphLinf}) and Lemma \ref{lem:Linfrate}, the $\max$ and $\min$ over $y\in \X_n$ in \eqref{eq:graphLinf} turn out to occur at a distance of $|x-y|\sim \eps r_0$ from $x$, at least asymptotically as $n\to \infty$ and $\eps\to 0$, and the quantitative assumption \eqref{eq:strictmax} allows us to control this approximation error. If the maximum of $r\mapsto r\eta(r)$ is not unique, then the graph $\infty$-Laplacian may not be asymptotically consistent with the $\infty$-Laplace operator $\Delta_\infty$. 

It is possible to construct nonincreasing $\eta$ where \eqref{eq:strictmax} is not satisfied. For example, consider the kernel
\begin{equation*}
\eta(r) = 
\begin{cases}
2,& \text{if } 0\leq r\leq \tfrac12\\
\frac1r,& \text{if } \tfrac12 \leq r\leq \tfrac34\\
0,& \text{if } r\geq \tfrac34.
\end{cases}
\end{equation*}
The maximum of $r\mapsto r\eta(r)$ is attained on the interval $[\tfrac12,\tfrac34]$ where $r\eta(r)=1$. We can mollify $\eta$ to obtain a smooth kernel wth the same property on the slightly smaller interval $[\tfrac12+\delta,\tfrac34-\delta]$ for any $\delta>0$. We remark that the commonly used Gaussian kernel $\eta(r) = e^{-\frac{r^2}{2\sigma^2}}$ satisfies \eqref{eq:strictmax} (i.e., the maximum of $f(r):=r\eta(r)$ occurs at $r=\sigma$ and $f''(\sigma) = -\frac{2f(r)}{\sigma} < 0$. 
\end{remark}

\begin{remark}
We note it is also common to make the \emph{manifold assumption}, where $X_1,X_2,\dots, X_n$ are a sequence of \emph{i.i.d.}~random variables sampled from an $m$-dimensional manifold $\M$ embedded in $\R^d$, where $m\ll d$. We expect that the analysis here will carry over to the manifold setting with additional technical details. 
\end{remark}

\subsection{\texorpdfstring{$\eps$}{epsilon}-ball graphs}
\label{sec:epsgraph}
The graph constructed with vertices $\X_n$ and edge weights $w_{xy} = \eta_\eps(|x-y|)$ is called a \emph{random geometric graph}, or sometimes an $\eps$-\emph{ball graph}, and is the most widely used graph construction in theoretical analysis of graph-based learning algorithms. Here, we review pointwise consistency for the $p$-Laplacian on $\eps$-ball graphs. Much of the consistency theory was established previously in \cite{calder2018game}, so this section is mostly review, with the additional observation that the arguments in \cite{calder2018game} establish  pointwise consistency \emph{rates} for the $\infty$-Laplacian.

We define the graph Laplacian on a random geometric graph by 
\begin{equation}\label{eq:graphLne}
\L^{n,\eps} u(x) = \frac{2}{\sigma_\eta n\eps^{d+2}}\sum_{y\in \X_n}\eta_\eps(|x-y|) (u(y)-u(x)).
\end{equation}
We also define the degree
\begin{equation}\label{eq:deg}
d^{n,\eps}(x) = \sum_{y\in \X_n} \eta_\eps(|x-y|).
\end{equation}
The \emph{random walk} graph Laplacian is defined by
\begin{equation}\label{eq:graphRW}
\L^{n,\eps}_{rw} u(x) = \frac{2}{\sigma_\eta \eps^2 d^{n,\eps}(x)}\sum_{y\in \X_n}\eta_\eps(|x-y|) (u(y)-u(x)).
\end{equation}
The random walk Laplacian is the generator for a random walk on the graph with transition probabilities $\eta_\eps(|x-y|)/d^{n,\eps}(x)$ of transitioning from $x$ to $y$.  The graph $\infty$-Laplacian is defined by
\begin{equation}\label{eq:graphLinf}
\L_\infty^{n,\eps} u(x) = \frac{1}{r_0^2\eta(r_0)\eps^2}\left(\min_{y\in \X_n}\left\{\eta_\eps(|x-y|) (u(y)-u(x))\right\}+\max_{y\in \X_n}\left\{\eta_\eps(|x-y|) (u(y)-u(x))\right\}\right),
\end{equation}
and the game-theoretic  graph $p$-Laplacian is defined by
\begin{equation}\label{eq:graphLp}
\L^{n,\eps}_p u(x) = \frac{1}{p}\L^{n,\eps}_{rw} u(x)  + \left( 1-\tfrac{2}{p} \right)\L^{n,\eps}_\infty u(x).
\end{equation}
In the continuum, $\L^{n,\eps}_p$ is consistent with the weighted $p$-Laplace operator
\begin{equation}\label{eq:Dps}
\Delta_p u = \frac{1}{p}\rho^{-2}\div(\rho^2 \nabla u)+ \left( 1-\tfrac{2}{p} \right)\Delta_\infty u,
\end{equation}
as is shown in the following theorem.
\begin{theorem}[Consistency on $\eps$-graphs]\label{thm:Ceps}
There exists $C_1,C_2,C_3>0$ such that for any $\eps>0$ with $n\eps^d\geq 1$ and any $0 < \lambda \leq 1$, the event that
\begin{equation}\label{eq:UPCLp}
\max_{x\in \X_n\setminus \partial_\eps \Omega}|\L^{n,\eps}_p u(x) - \Delta_p u(x)| \leq C_1\left(\|u\|_{C^2(B(x,\eps))}^2 |\nabla u(x)|^{-1}\theta^{-1}\eps + \|u\|_{C^3(B(x,\eps))}(\lambda+\eps)  \right) 
\end{equation}
holds for all $u\in C^3(\bar{\Omega})$ has probability at least $1 - C_2n\exp\left( -C_3 n\eps^{3d/2}\lambda^{d/2} \right) - C_2n\exp\left( -C_3 n\eps^{d+2}\lambda^2 \right)$.
\end{theorem}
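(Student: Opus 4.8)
The plan is to exploit the linear decompositions $\L^{n,\eps}_p u = \tfrac{1}{p}\L^{n,\eps}_{rw}u + \big(1-\tfrac2p\big)\L^{n,\eps}_\infty u$ and $\Delta_p u = \tfrac{1}{p}\rho^{-2}\div(\rho^2\nabla u) + \big(1-\tfrac2p\big)\Delta_\infty u$, prove consistency of each of the two pieces separately, and combine them by the triangle inequality; since $\tfrac1p,\,1-\tfrac2p\in[0,1]$ for $p\ge 2$, the constants are unaffected. The factor $n$ in front of the exponentials comes from a union bound over the vertices $x\in\X_n$ at the very end, so it suffices to fix $x\in\X_n\setminus\partial_\eps\Omega$ and exhibit a ``good event'' of probability at least $1 - C\exp(-cn\eps^{3d/2}\lambda^{d/2}) - C\exp(-cn\eps^{d+2}\lambda^2)$ on which \eqref{eq:UPCLp} holds \emph{simultaneously for all} $u\in C^3(\bar\Omega)$.

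For the random-walk term I would Taylor expand $u(y)-u(x) = \nabla u(x)\cdot(y-x) + \tfrac12(y-x)^\top D^2u(x)(y-x) + R_u(x,y)$ with $|R_u(x,y)|\le C\|u\|_{C^3(B(x,\eps))}|y-x|^3$ for $|y-x|\le\eps$, and substitute into $\L^{n,\eps}_{rw}u(x)$. Using $\sum_{y\in\X_n}\eta_\eps(|x-y|)|y-x|^3\le\eps^3 d^{n,\eps}(x)$, this writes $\L^{n,\eps}_{rw}u(x)$, up to a deterministic remainder that is $O(\|u\|_{C^3(B(x,\eps))}\eps)$ after the $\eps^{-2}$ normalization, in terms of the $u$-independent random sums $\sum_{y\in\X_n}\eta_\eps(|x-y|)(y-x)_i$, $\sum_{y\in\X_n}\eta_\eps(|x-y|)(y-x)_i(y-x)_j$ and $d^{n,\eps}(x)$, weighted by the fixed quantities $\nabla u(x)$, $D^2u(x)$, and $1$. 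Each is a sum of at most $n$ i.i.d.\ variables bounded by $C\eps$, $C\eps^2$, $C$ and with means of order $n\eps^{d+2}$, $n\eps^{d+2}$, $n\eps^d$; Bernstein's inequality shows that on an event of failure probability at most $C\exp(-cn\eps^{d+2}\lambda^2)$ each deviates from its mean by at most a $\lambda$-fraction (the exponent $\eps^{d+2}$ is forced by the gradient sum, whose variance is largest relative to the target accuracy, while the quadratic sum and the degree give the better exponent $\eps^d$). Taylor-expanding the corresponding $\rho$-weighted integrals — using $\rho\in C^2(\bar\Omega)$, \eqref{eq:etaprob}, and the vanishing of odd moments of $\eta$ — identifies the limit of the mean as $\Delta u(x) + 2\rho(x)^{-1}\nabla\rho(x)\cdot\nabla u(x) = \rho(x)^{-2}\div(\rho^2\nabla u)(x)$ with error $O(\|u\|_{C^3}\eps)$. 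The point is that uniformity over $u$ is automatic here: after the Taylor step the $u$-dependence lives entirely in the fixed coefficients $\nabla u(x)$, $D^2u(x)$ and the scalar $\|u\|_{C^3(B(x,\eps))}$, so controlling these finitely many random sums controls $\L^{n,\eps}_{rw}u(x)$ for every $u$ at once.

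The $\infty$-Laplacian term is where the real work lies, since the $\min$ and $\max$ are nonlinear and the Bernstein step is replaced by a covering/density argument; most of this analysis is essentially contained in \cite{calder2018game}, the new point being that the error can be tracked in terms of $\eps$ and $\lambda$. Writing $y = x + r\eps\omega$ with $r\in[0,1]$, $|\omega|=1$, and $\nu = \nabla u(x)/|\nabla u(x)|$, the \emph{upper} bound on $\max_{y\in\X_n}\eta_\eps(|x-y|)(u(y)-u(x))$ uses only $\X_n\subset\bar\Omega$ and Taylor expansion: its leading term is $\eps|\nabla u(x)|\,r\eta(r)(\omega\cdot\nu)$, and combining $r\eta(r)\le r_0\eta(r_0) - \theta(r-r_0)^2$ from \eqref{eq:strictmax} with $\omega\cdot\nu = 1 - \tfrac12|\omega-\nu|^2$ forces any near-maximizer to lie within $O\big(\sqrt{\eps\|u\|_{C^2}/(\theta|\nabla u(x)|)}\big)$ of $x + r_0\eps\nu$; there the second-order term is $\tfrac{\eps^2}{2}r_0^2\eta(r_0)\Delta_\infty u(x)$ up to the advertised error — which is precisely where the factors $\theta^{-1}$ and $|\nabla u(x)|^{-1}$ enter, the gradient term having to dominate in order to localize the maximizer, so the bound degenerates as $|\nabla u(x)|\to 0$. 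For the matching \emph{lower} bound one needs an actual vertex $X_i\in\X_n$ near the ideal location $x + r_0\eps\nu$; the event that, for every $x\in\X_n\setminus\partial_\eps\Omega$ and every direction $\nu$, such a vertex exists in a ball of $\rho$-mass of order $\eps^{3d/2}\lambda^{d/2}$ (with $\lambda$ tuning its radius and \eqref{eq:rho} lower-bounding its mass) fails with probability at most $Cn\exp(-cn\eps^{3d/2}\lambda^{d/2})$; evaluating at $X_i$ and Taylor expanding gives the lower bound, with the residual displacement of $X_i$ from the optimum contributing $O(\lambda\|u\|_{C^3})$ after the $\eps^{-2}$ normalization. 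Applying the same reasoning to $-u$ handles the $\min$; adding $\min$ and $\max$, the $\eps|\nabla u(x)|r_0\eta(r_0)$ terms cancel, the second-order terms add to $\eps^2 r_0^2\eta(r_0)\Delta_\infty u(x)$, and dividing by $r_0^2\eta(r_0)\eps^2$ yields $\Delta_\infty u(x)$ up to the claimed error.

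Finally, intersecting the random-walk good event with the $\infty$-Laplacian density event and taking a union bound over the $n$ choices of $x\in\X_n$ gives the stated probability — the polynomial covering-number prefactors being absorbed into the leading $n$ in the only regime that matters, $n\eps^{3d/2}\lambda^{d/2}\gtrsim1$ — and combining the two consistency estimates with weights $\tfrac1p$ and $1-\tfrac2p$ produces \eqref{eq:UPCLp}; the hypothesis $n\eps^d\ge1$ is what guarantees $d^{n,\eps}(x)\asymp n\eps^d$ and makes the covering argument non-vacuous. I expect the main obstacle to be the $\infty$-Laplacian lower bound: producing, uniformly in $x$ and in the gradient direction $\nu$, a sample point close enough to the optimizer $x + r_0\eps\nu$, and tracking precisely how its residual displacement — governed by $\lambda$ — together with the nondegeneracy constant $\theta$ in \eqref{eq:strictmax} and the lower bound on $|\nabla u(x)|$ propagates into the final error after division by $\eps^2$.
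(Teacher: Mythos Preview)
Your proposal is correct and follows essentially the same route as the paper: split $\L^{n,\eps}_p$ and $\Delta_p$ into their random-walk and $\infty$-Laplacian components, invoke the uniform consistency of $\L^{n,\eps}_{rw}$ from \cite[Theorem 5]{calder2018game} (whose proof is exactly the Taylor-plus-Bernstein argument you outline), and prove the $\infty$-Laplacian rate via the strict-maximum condition \eqref{eq:strictmax} together with a fill-distance bound. The only cosmetic difference is that the paper packages the density step through the single global quantity $\delta_n=\sup_{y\in\Omega}\dist(y,\X_n)$ (Proposition \ref{prop:distXn}) rather than a point-near-$x+r_0\eps\nu$ covering, and first reduces the $\max/\min$ to the one-dimensional optimizations $B^\pm(x)=\max_{0\le r\le 1}\{\tfrac{1}{\eps}r\eta(r)|\nabla u(x)|\pm\tfrac12 r^2\eta(r)\Delta_\infty u(x)\}$ before localizing $r^\pm$ near $r_0$; your joint $(r,\omega)$ localization is equivalent and leads to the same error.
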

\begin{proof}
It was shown in \cite[Theorem 5]{calder2018game} that the event that
\begin{equation}\label{eq:PC}
\max_{x\in \X_n\setminus \partial_\eps \Omega}|\L^{n,\eps}_{rw} u(x)  - \rho^{-2}\div\left( \rho^2\nabla u \right)(x)| \leq C\|u\|_{C^3(B(x,\eps))}(\lambda + \eps)
\end{equation}
holds for all $u \in C^3(\bar{\Omega})$ has probability at least $1 - Cn\exp\left( -cn\eps^{d+2}\lambda^2 \right)$ for constants $C,c>0$ and any $0 < \lambda \leq 1$. This is a uniform (over functions $u$) version of pointwise consistency for the graph Laplacian. The weaker nonuniform version dates back to results in \cite{hein2005graphs,hein2007graph}. The proof follows by combining \eqref{eq:PC} with Lemma \ref{lem:Linfrate} below.
\end{proof}
\begin{lemma}\label{lem:Linfrate}
There exists $C_1,C_2,C_3>0$ such that for any $\eps>0$ with $n\eps^d\geq 1$, the event that
\begin{equation}\label{eq:UPCLinf}
\max_{x\in \X_n\setminus \partial_\eps \Omega}|\L^{n,\eps}_\infty u(x) - \Delta_\infty u(x)| \leq C_1\left(\|u\|_{C^2(B(x,\eps))}^2 |\nabla u(x)|^{-1}\theta^{-1}\eps + \|u\|_{C^3(B(x,\eps))}(\lambda+\eps)  \right) 
\end{equation}
holds for all $u\in C^3(\bar{\Omega})$ has probability at least $1 - C_2n\exp\left( -C_3 n\eps^{3d/2}\lambda^{d/2} \right)$.
\end{lemma}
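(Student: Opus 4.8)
The plan is to prove the uniform pointwise consistency rate for the graph $\infty$-Laplacian $\L^{n,\eps}_\infty$ by reducing the problem to a deterministic estimate on a perturbed Euclidean $\infty$-Laplacian plus a probabilistic covering argument. First I would Taylor expand $u$ about $x$: for any $y \in B(x,\eps)$ write $u(y) - u(x) = \nabla u(x)\cdot(y-x) + \tfrac12 (y-x)^T D^2u(x)(y-x) + O(\|u\|_{C^3}\eps^3)$, so that $\eta_\eps(|x-y|)(u(y)-u(x))$ is, up to $O(\|u\|_{C^3(B(x,\eps))}\eps^3)$ errors, a smooth function of $y$ on the annulus where $\eta_\eps > 0$. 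The key point is that the extremal directions of this quadratic are (to leading order) $y - x \approx \pm r_0\eps\, \tfrac{\nabla u(x)}{|\nabla u(x)|}$: condition \eqref{eq:strictmax} with the quadratic lower bound $\theta(r-r_0)^2$ forces $r|x-y| \mapsto r\eta(r)$ to have a nondegenerate (quadratic) maximum at $r = r_0$, and among directions at that radius the linear term $\nabla u(x)\cdot(y-x)$ is extremized along $\pm\nabla u(x)$. This is exactly the deterministic computation behind $\L^{n,\eps}_\infty u(x) \to \Delta_\infty u(x)$; carrying the error terms gives the $\|u\|^2_{C^2}|\nabla u(x)|^{-1}\theta^{-1}\eps$ term (from the uncertainty in locating the maximizing radius, which is $O(\sqrt{\cdot/\theta})$ and couples to the second-order part of $u$) and the $\|u\|_{C^3}(\lambda + \eps)$ term.

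Second, I would address the discretization error, i.e., replacing the $\min$/$\max$ over the continuum annulus by the $\min$/$\max$ over the sample points $\X_n$. This is where the probabilistic content and the parameter $\lambda$ enter. One needs that in every ball of radius $\sim \lambda\eps$ inside the relevant annulus around every $x \in \X_n$ there is at least one sample point; equivalently, the samples form a $\lambda\eps$-net of a neighborhood of each vertex. Since $\rho \geq \beta$, a fixed ball $B(z,\lambda\eps)\cap\Omega$ has mass $\gtrsim \lambda^d\eps^d$, so the probability it contains no sample is at most $(1 - c\lambda^d\eps^d)^n \leq \exp(-cn\lambda^d\eps^d)$; covering the annuli around all $n$ vertices by $O(\lambda^{-d})$ such balls and union-bounding gives a failure probability $\lesssim n\lambda^{-d}\exp(-cn\lambda^d\eps^d)$, which (absorbing the polynomial factor) is of the stated form $C_2 n\exp(-C_3 n\eps^{3d/2}\lambda^{d/2})$ — note $\eps^{3d/2}\lambda^{d/2} \le \eps^d\lambda^d$ when $\lambda,\eps \le 1$, so the claimed bound is the weaker, safer one. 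On the good event, any continuum maximizer $y^*$ in the annulus has a sample point within $\lambda\eps$, and since the integrand $\eta_\eps(|x-y|)(u(y)-u(x))$ is Lipschitz in $y$ with constant $O(\|u\|_{C^1}\eps^{-1})$ on the support, replacing $y^*$ by the nearby sample changes the value by $O(\|u\|_{C^1}\lambda)$; combined with the normalization $\tfrac{1}{r_0^2\eta(r_0)\eps^2}$ this yields the $\|u\|_{C^3}\lambda$ contribution.

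Third, I would combine the two: the continuum extremal value of $\eta_\eps(|x-y|)(u(y)-u(x))$ over $\{|y-x|\le\eps\}$ agrees with the discrete one up to $O(\|u\|_{C^1}\lambda)$ (Step 2), and equals $\pm r_0^2\eta(r_0)\eps^2|\nabla u(x)| + r_0^2\eta(r_0)\eps^2 \cdot \tfrac{\eps}{2}(\nabla u(x))^T D^2 u(x)\cdot(\pm\tfrac{\nabla u(x)}{|\nabla u(x)|})\cdot(\text{stuff}) + O(\text{errors})$ after the Taylor expansion and the nondegenerate-maximum analysis (Step 1); summing the $\min$ and $\max$ contributions, the $|\nabla u(x)|$ terms cancel, the surviving term is $2 r_0^3\eta(r_0)\eps^3 |\nabla u(x)|^{-1}\sum_{i,j}u_{x_ix_j}u_{x_i}u_{x_j}\cdot(\text{const})$, and after dividing by $r_0^2\eta(r_0)\eps^2$ and choosing the normalizing constant correctly (this pins down the $\tfrac{1}{r_0^2\eta(r_0)\eps^2}$ prefactor in \eqref{eq:graphLinf}) one recovers $\Delta_\infty u(x)$ plus the advertised error. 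The uniformity over all $u \in C^3(\bar\Omega)$ is free here because the good event in Step 2 (a geometric net property of $\X_n$) does not depend on $u$, and all the analytic estimates are in terms of $\|u\|_{C^2}, \|u\|_{C^3}$ on $B(x,\eps)$ — there is no supremum over $u$ inside the probability.

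The main obstacle is the nondegenerate-maximum analysis in Step 1, specifically extracting the $\theta^{-1}$ dependence correctly: because the maximizing radius $r_0$ is only a quadratic maximum of $r\eta(r)$ (not a strict interior critical point of a strongly concave function in all variables), perturbing by the $O(\|u\|_{C^2}\eps)$ second-order part of $u$ shifts the maximizing point by $O(\|u\|_{C^2}\eps/(\theta|\nabla u(x)|))$ in radius, and one must check this shift feeds back into the value only at second order in that displacement (hence the clean $\|u\|^2_{C^2}|\nabla u(x)|^{-1}\theta^{-1}\eps$ scaling rather than something worse). This is precisely the argument from \cite[Theorem 5]{calder2018game} for the $\infty$-Laplacian term; the present lemma is essentially the observation that that argument is in fact uniform over $u$ and delivers an explicit rate, so the proof is largely a matter of bookkeeping the constants already implicit there.
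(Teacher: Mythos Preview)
Your overall architecture (Taylor expand, locate the extremizer near $y=x\pm r_0\eps\,\nabla u/|\nabla u|$ via \eqref{eq:strictmax}, then replace the continuum extremum by the discrete one using a covering/net argument) matches the paper's proof, and your analysis of the deterministic $\theta^{-1}$ term in Step~1 is essentially the paper's computation of $B^{+}-B^{-}$. The gap is in Step~2.

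Your first-order Lipschitz bound on the discretization error does not survive the $\eps^{-2}$ normalization. Even granting your Lipschitz constant, perturbing the continuum maximizer $y^*$ by $\lambda\eps$ changes the raw value $\eta_\eps(|x-y|)(u(y)-u(x))$ by $O(\|u\|_{C^1}\lambda)$, and after dividing by $r_0^2\eta(r_0)\eps^2$ this is $O(\|u\|_{C^1}\lambda\eps^{-2})$, not $O(\|u\|_{C^3}\lambda)$ as you claim. The errors in the $\min$ and $\max$ do not cancel, so the sum carries the same divergence. What the paper actually uses (implicit in the expansion $r_0^2\eta(r_0)\L^{n,\eps}_\infty u(x)=B^+(x)-B^-(x)+O(\|u\|_{C^3}(\delta_n^2\eps^{-3}+\eps))$ borrowed from \cite{calder2018game}) is a \emph{second-order} estimate: the objective $y\mapsto \eta_\eps(|x-y|)(u(y)-u(x))$ is $C^2$ with an interior maximum and Hessian of size $O(\|u\|_{C^1}\eps^{-1})$, so sampling within distance $\delta_n$ of the continuum maximizer costs $O(\|u\|_{C^1}\eps^{-1}\delta_n^2)$ in the raw value, hence $O(\|u\|_{C^1}\delta_n^2\eps^{-3})$ after normalization. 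This is $O(\lambda)$ precisely when $\delta_n\lesssim \lambda^{1/2}\eps^{3/2}$, and applying Proposition~\ref{prop:distXn} at this scale (not at scale $\lambda\eps$) is exactly what produces the exponent $n(\lambda^{1/2}\eps^{3/2})^d=n\eps^{3d/2}\lambda^{d/2}$ in the probability bound.

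Relatedly, your justification that ``$\eps^{3d/2}\lambda^{d/2}\le \eps^d\lambda^d$ when $\lambda,\eps\le 1$'' is false: it is equivalent to $\eps\le\lambda$, which is not assumed. So your $\lambda\eps$-net argument neither yields the stated accuracy nor the stated probability; the fix is to take the net at scale $\lambda^{1/2}\eps^{3/2}$ and use the quadratic behavior at the maximum.
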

Before proving Lemma \ref{lem:Linfrate}, we make a few remarks.
\begin{remark}
In fact, pointwise consistency of the graph $\infty$-Laplacian (Lemma \ref{lem:Linfrate}) requires $\nabla u(x)\neq 0$, since the $\infty$-Laplacian is discontinuous (as a function of $\nabla u$) at $\nabla u(x)=0$.  We interpret the right hand side of \eqref{eq:UPCLinf} to be $\infty$ if $\nabla u(x)=0$. The viscosity solution framework for proving discrete to continuum convergence does not require consistency at points where $\nabla u(x)=0$, since the viscosity sub-~and supersolution conditions are not required to hold at such points (see, e.g., \cite{calder2018game}).
\label{rem:zerograd}
\end{remark}
\begin{remark}
We note that the continuum operator $\Delta_p$ \emph{forgets} about the distribution $\rho$ as $p\to \infty$. This was first observed in \cite{el2016asymptotic}, and proved rigorously in \cite{calder2018game,slepcev2019analysis}. On the other hand, for semi-supervised learning problems with very few labels, the theory in \cite{el2016asymptotic,calder2018game,slepcev2019analysis} suggests one should use $d < p < p + \delta$ for some small $\delta>0$, so that the algorithm is well-posed with very few labels, and still has maximal dependence on the data distribution $\rho$, which is essential for semi-supervised learning. We show in Section \ref{sec:symknn} that this observation is completely dependent on the $\eps$-ball graph construction, and in particular, the story is very much different on $k$-NN graphs.
\label{rem:pinf_eps}
\end{remark}
%\begin{remark}
%A straightforward computation shows that we can write $\Delta_p$ in divergence form as
%\[\Delta_p u = \frac{1}{p}|\nabla u|^{2-p}\rho^{-2}\div\left( \rho^2 |\nabla u|^{p-2}\nabla u \right).\]
%\end{remark}

We now turn to the proof of Lemma \ref{lem:Linfrate}. By \cite[Theroem 6]{calder2018game}, we know that
\begin{equation}\label{eq:PCLinf}
\lim_{\substack{n\to \infty \\ \eps_n\to 0}}\L_\infty^{n,\eps_n} u(x) = \Delta_\infty u(x)
\end{equation}
holds for all $u \in C^3(\bar{\Omega})$ and $x\in \Omega$ with $\nabla u(x)\neq 0$ with probability one, provided $\eps_n\to 0$ so that 
\begin{equation}\label{eq:epscond}
\lim_{n\to \infty}\frac{n\eps_n^{3d/2}}{\log n}= \infty.
\end{equation}
Lemma \ref{lem:Linfrate} strengthens this to pointwise consistency with a convergence rate. For this, we need some additional notation. We define
\begin{equation}\label{eq:rne}
\delta_n = \sup_{x\in \Omega} \dist(x,\X_n).
\end{equation}

We need a couple of preliminary results before proving Lemma \ref{lem:Linfrate}. The first concerns the approximation of the maximum value of a function over $\Omega$ with the maximum over the point cloud $\X_n$. 
\begin{proposition}\label{prop:c2max}
Let $\psi\in C^2(\bar{\Omega})$ and assume $\psi$ attains its maximum value over $\bar{\Omega}$ at some $x_0\in \Omega_{\delta_n}$. Then we have
\begin{equation}\label{eq:c2maxbound}
\left|\max_{x\in \Omega}\psi(x) - \max_{x\in \X_n}\psi(x)\right| \leq \|\nabla^2 \psi\|_{L^\infty(\Omega)}\delta_n^2.
\end{equation}
\end{proposition}
\begin{proof}
Since $\nabla \psi(x_0)=0$ and  $\psi$ is $C^2$, for any $x\in \X_n$ we have
\[\psi(x_0) - \psi(x) \leq \|\nabla^2 \psi\|_{L^\infty(\Omega)}|x_0-x|^2,\]
and therefore
\[ \|\nabla^2 \psi\|_{L^\infty(\Omega)}\delta_n^2 \geq  \|\nabla^2 \psi\|_{L^\infty(\Omega)}\min_{x\in \X_n}|x_0-x|^2 \geq \psi(x_0) - \max_{x\in \X_n}\psi(x).\qedhere\] 
\end{proof}
The second preliminary result we quote directly from \cite[Proposition 4.2]{calder2019consistency}. 
\begin{proposition}\label{prop:maxprop}
For any $p\in \R^d$ with $p\neq 0$ and $A\in \R^{d\times d}$, we have
\[\left| \max_{|z|=r}\left\{ p\cdot z + \frac{1}{2}z^T Az \right\} - r|p| - \frac{1}{2}r^2|p|^{-2}p^TAp\right | \leq 2r^3 \|A\|^2 |p|^{-1}.\]
\end{proposition}
We note that we prove a generalization of Proposition \ref{prop:maxprop} in Section \ref{sec:symknn} when studying symmetrized $k$-NN graphs (see Proposition \ref{prop:Hcell}).

We now give the proof of Lemma \ref{lem:Linfrate}.
\begin{proof}[Proof of Lemma \ref{lem:Linfrate}]
First, we claim that
\begin{equation}\label{eq:exp1}
r_0^2\eta(r_0)\L_\infty^{n,\eps} u(x) = B^+(x) - B^-(x) + O\left(\|u\|_{C^3(B(x,\eps)}(\delta_n^2\eps^{-3} + \eps)\right),
\end{equation}
where
\[B^\pm(x) = \max_{0 \leq r \leq 1}\left\{ \frac{1}{\eps}r\eta(r)|\nabla u(x)| \pm \frac{1}{2}r^2 \eta(r)\Delta_\infty u(x) \right\}.\]
To see this, consider the $\max$ term in $\L^{n,\eps}_\infty u(x)$, 
\[M := \frac{1}{\eps^2}\min_{y\in \X_n}\left\{\eta_\eps(|x-y|) (u(y)-u(x))\right\}.\]
We use Proposition \ref{prop:c2max} to obtain
\begin{align*}
M &= \frac{1}{\eps^2}\max_{y\in B(x,\eps)}\left\{\eta_\eps(|x-y|) (u(y)-u(x))\right\} + O(\delta_n^2\eps^{-3})\\
&= \frac{1}{\eps^2}\max_{z\in B(0,1)}\left\{\eta(|z|) (u(x+\eps z)-u(x))\right\} + O(\delta_n^2\eps^{-3})\\
&= \frac{1}{\eps^2}\max_{z\in B(0,1)}\left\{\eta(|z|) \left(\eps \nabla u(x)\cdot z + \frac{\eps^2}{2}z^T \nabla^2 u(x)z\right)\right\} + O(\delta_n^2\eps^{-3} + \eps)\\
&= \max_{0 \leq r \leq 1}\left\{\eta(r) \max_{|z|=r}\left\{\frac{1}{\eps}\nabla u(x)\cdot z + \frac{1}{2}z^T \nabla^2 u(x)z\right\}\right\} + O(\delta_n^2\eps^{-3} + \eps).
\end{align*}
Applying Proposition \ref{prop:maxprop} we have
\begin{align*}
M &= \max_{0 \leq r \leq 1}\left\{\frac{1}{\eps}r\eta(r)|\nabla u(x)| + \frac{1}{2}r^2\eta(r)|\nabla u(x)|^{-2} \nabla u(x)^T\nabla^2 u(x)\nabla u(x)\right\} + O(\delta_n^2\eps^{-3} + \eps)\\
&= B^+(x) + O(\delta_n^2\eps^{-3} + \eps),
\end{align*}
since $\Delta_\infty u(x) = |\nabla u(x)|^{-2} \nabla u(x)^T\nabla^2 u(x)\nabla u(x)$. We can apply a similar argument to the $\min$ term from $\L^{n,\eps}_\infty u(x)$, and this establishes the claim \eqref{eq:exp1}.

Now, let $r^\pm\in [0,1]$ such that
\[B^\pm(x) =  \frac{1}{\eps}r^\pm\eta(r^\pm)|\nabla u(x)| \pm \frac{1}{2}(r^\pm)^2 \eta(r^\pm)\Delta_\infty u(x).\]
By \eqref{eq:strictmax} we have
\begin{align*}
\frac{1}{\eps}r_0\eta(r_0)|\nabla u(x)| \pm \frac{1}{2}r_0^2\eta(r_0)\Delta_\infty u(x)&\leq B^\pm(x) \\
&=\frac{1}{\eps}r^\pm\eta(r^\pm)|\nabla u(x)| \pm \frac{1}{2}(r^\pm)^2\eta(r^\pm)\Delta_\infty u(x)\\
&\leq\frac{1}{\eps}|\nabla u(x)|\left(r_0\eta(r_0) -\theta(r^\pm-r_0)^2\right)\pm \frac{1}{2}(r^\pm)^2\eta(r^\pm)\Delta_\infty u(x).
\end{align*}
Therefore
\[\frac{\theta}{\eps}(r^\pm - r_0)^2|\nabla u(x)| \leq \frac{1}{2}\left|r_0^2\eta(r_0) - (r^{\pm})^2\eta(r^\pm) \right| |\Delta_\infty u(x)|.\]
Since $r\mapsto r^2\eta(r)$ is Lipschitz continuous we have
\[\frac{\theta}{\eps}(r^\pm - r_0)^2|\nabla u(x)|  \leq C|\Delta_\infty u(x)| |r^{\pm}-r_0|,\]
and we deduce that
\[|r^\pm - r_0|\leq C\|u\|_{C^2(B(x,\eps))}\theta^{-1}|\nabla u(x)|^{-1}\eps.\]
Since
\[B^-(x) \geq \frac{1}{\eps}r^+\eta(r^+)|\nabla u(x)| - \frac{1}{2}(r^+)^2 \eta(r^+)\Delta_\infty u(x)\]
we have
\[B^+(x) - B^-(x)\leq (r^+)^2 \eta(r^+)\Delta_\infty u(x) \leq r_0^2\eta(r_0)\Delta_\infty u(x) + C\|u\|_{C^2(B(x,\eps))}^2\theta^{-1}|\nabla u(x)|^{-1}\eps.\]
Similarly, since 
\[B^+(x) \geq \frac{1}{\eps}r^-\eta(r^-)|\nabla u(x)| +\frac{1}{2}(r^-)^2 \eta(r^-)\Delta_\infty u(x)\]
we have
\[B^+(x) - B^-(x)\geq (r^-)^2 \eta(r^-)\Delta_\infty u(x) \geq r_0^2\eta(r_0)\Delta_\infty u(x) - C\|u\|_{C^2(B(x,\eps))}^2\theta^{-1}|\nabla u(x)|^{-1}\eps.\]
Combining this with \eqref{eq:exp1} and Proposition \ref{prop:distXn} below completes the proof.
\end{proof}

\begin{proposition}\label{prop:distXn}
There exists $C_1,C_2>0$ such that for every $t>0$ with $nt^d \geq 1$ we have $\P(\delta_n > t) \leq C_1n\exp\left(-C_2 \beta nt^d \right)$.
\end{proposition}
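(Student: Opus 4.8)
The plan is a covering-number argument combined with a union bound over a net (here $\delta_n$ is the covering radius, or fill distance, of the sample $\X_n$ in $\Omega$). Since $nt^d\ge 1$ and $t>0$ force $n\ge 1$, and since every $X_j\in\Omega$ gives $\delta_n=\sup_{y\in\Omega}\min_j|y-X_j|\le\operatorname{diam}\Omega$ almost surely, the bound is trivial when $t\ge\operatorname{diam}\Omega$ (the probability is then $0$); so we may assume $t<\operatorname{diam}\Omega$. Let $\{z_1,\dots,z_M\}\subset\Omega$ be a maximal $\tfrac t2$-separated subset of $\Omega$. Maximality forces $\bigcup_i B(z_i,\tfrac t2)\supseteq\Omega$, while the balls $B(z_i,\tfrac t4)$ are pairwise disjoint; combining disjointness with the lower volume bound $|B(z_i,\tfrac t4)\cap\Omega|\ge c_0 t^d$ discussed below and $\sum_i|B(z_i,\tfrac t4)\cap\Omega|\le|\Omega|$ gives $M\le C_0 t^{-d}$, with $C_0$ depending only on $\Omega$ and $d$.

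The key observation is that if $\delta_n>t$ then there is a point $y\in\Omega$ with $\dist(y,\X_n)>t$; choosing $z_i$ with $|y-z_i|\le\tfrac t2$ yields $\dist(z_i,\X_n)>\tfrac t2$, i.e.\ $\X_n\cap B(z_i,\tfrac t2)=\emptyset$. Hence, writing $\mu$ for the sampling measure (with density $\rho$) and using independence of the $X_j$,
\[
\P(\delta_n>t)\;\le\;\sum_{i=1}^M\P\big(X_j\notin B(z_i,\tfrac t2)\ \text{for all }j\big)\;=\;\sum_{i=1}^M\big(1-\mu(B(z_i,\tfrac t2))\big)^n.
\]

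It remains to lower bound $\mu(B(z,\tfrac t2))$ uniformly in $z\in\Omega$. Since $\rho\ge\beta$ on $\Omega$ by \eqref{eq:rho}, we have $\mu(B(z,r))\ge\beta\,|B(z,r)\cap\Omega|$, and since $\partial\Omega$ is smooth it satisfies an interior corkscrew (cone) condition, so $|B(z,r)\cap\Omega|\ge c_0 r^d$ for all $z\in\Omega$ and $0<r\le\operatorname{diam}\Omega$, with $c_0=c_0(\Omega)>0$. Thus $\mu(B(z_i,\tfrac t2))\ge c_1\beta t^d$ with $c_1=2^{-d}c_0$, and using $1-s\le e^{-s}$,
\[
\P(\delta_n>t)\;\le\;M\,(1-c_1\beta t^d)^n\;\le\;C_0 t^{-d}\exp(-c_1\beta n t^d).
\]
Finally $nt^d\ge1$ gives $t^{-d}\le n$, so $\P(\delta_n>t)\le C_0 n\exp(-c_1\beta n t^d)$, which is the claim with $C_1=C_0$, $C_2=c_1$.

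The only ingredient that is not completely elementary is the uniform volume lower bound $|B(z,r)\cap\Omega|\ge c_0 r^d$ over all $z\in\Omega$; this is the one place the smoothness (or merely Lipschitz regularity) of $\partial\Omega$ enters, ensuring that even for $z$ close to $\partial\Omega$ a fixed fraction of $B(z,r)$ stays inside $\Omega$ (for radii exceeding the boundary-curvature scale one reduces to a fixed radius $r_*$ and uses $t<\operatorname{diam}\Omega$). The covering estimate, the union bound, and the Bernoulli tail bound are all standard.
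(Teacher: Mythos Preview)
Your proof is correct and follows essentially the same route as the paper's: a covering of $\Omega$ by balls of radius comparable to $t$, a union bound over the covering, the Bernoulli estimate $(1-\mu(B))^n\le e^{-n\mu(B)}$ with $\mu(B)\gtrsim\beta t^d$, and the final substitution $t^{-d}\le n$. The only cosmetic differences are that you construct the covering explicitly as a maximal $\tfrac t2$-separated set (and bound its cardinality via disjointness of the $\tfrac t4$-balls), whereas the paper simply asserts such a covering exists from the Lipschitz regularity of $\partial\Omega$; and you articulate the corkscrew/cone condition where the paper leaves the uniform lower bound $|B(x_i,h)\cap\Omega|\ge C_1h^d$ implicit.
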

\begin{proof}
Let $t>0$ with $nt^d \geq 1$. Since $\Omega$ has a smooth (and hence Lipschitz) boundary, for any $h>0$ we can find a covering of $\Omega$ by balls $B(x_1,h),B(x_2,h),\dots,B(x_M,h)$  so that $|B(x_i,h)\cap \Omega|\geq C_1h^d$ and $M\leq C_2h^{-d}$ for some universal constants $C_1,C_2>0$. Since $\rho \geq \beta > 0$, the probability that $B(x_i,h)\cap \Omega \cap \X_n$ is empty is bounded by $(1-C_1\beta h^d)^n \leq \exp(-C_1\beta nh^d)$. Hence, the event that $B(x_i,h)\cap \Omega\cap \X_n$ has at least one point for all $i=1,\dots,M$ has probability at least $1 -C_2h^{-d}\exp(-C_1\beta nh^d)$. Since the balls $B(x_i,h)$ cover $\Omega$, for each $x\in \Omega$ there exists $x_i$ such that $|x-x_i|\leq h$. If $B(x_i,h)\cap \Omega \cap \X_n$ is nonempty, then there exists $y\in \X_n$ such that $|x-y|\leq 2h$. Thus, if $\delta_n > 2h$ then at least one of $B(x_i,h)\cap \Omega\cap \X_n$ is empty. Therefore 
\[\P(\delta_{n}> 2h) \leq C_2h^{-d}\exp(-C_1\beta nh^d).\]
The proof is completed by setting $h = t/2$ and recalling $h^{-d}= 2^dt^{-d} \leq 2^dn$.
\end{proof}

\subsection{Nonsymmetric k-nearest neighbor graphs}
\label{sec:nonknn}

We now consider the simplest $k$-nearest neighbor graph. Let
\begin{equation}\label{eq:Ne}
N_\eps(x) = \sum_{y\in \X_n}\one_{B(x,\eps)}(y)
\end{equation}
be the number of samples in an $\eps$-neighborhood of $x\in \Omega$. Here, $B(x,\eps)$ is the closed ball of radius $\eps>0$ centered about $x$. For $k\leq n$ we define 
\begin{equation}\label{eq:epsk}
\eps_k(x) = \min\{\eps > 0 \, :\, N_\eps(x)\geq k\}.
\end{equation}
The value of $\eps_k(x)$ for $x\in \X_n$ is the distance from $x$ to its $k^{\rm th}$ nearest neighbor in $\X_n$. The \emph{nonsymmetric} $k$-nearest neighbor random walk graph Laplacian is then given by 
\begin{equation}\label{eq:graphRWknn}
\L^{n,k}_{a,rw} u(x) = \frac{2}{\sigma_\eta  d^{n,\eps_k(x)}(x)}\left(\frac{n\alpha(d)}{k} \right)^{2/d}\sum_{y\in \X_n}\eta_{\eps_k(x)}(|x-y|) (u(y)-u(x)).
\end{equation}
We note that since $\eps_k(x)$ satisfies, on average, $\alpha(d) n\eps_k(x)^d \rho(x) \approx k$, the normalization in \eqref{eq:graphRWknn} is equivalent, up to the factor $\rho(x)$, with the normalization in the case of the $\eps$-ball graph given in \eqref{eq:graphRW}.

The graph $\infty$-Laplacian is defined by
\begin{align}\label{eq:graphLinfknn}
\L_{a,\infty}^{n,k} u(x) &= \frac{1}{r_0^2\eta(r_0)}\left(\frac{n\alpha(d)}{k} \right)^{2/d}\Bigg(\min_{y\in \X_n}\left\{\eta_{\eps_k(x)}(|x-y|) (u(y)-u(x))\right\}\\
&\hspace{2.5in}+\max_{y\in \X_n}\left\{\eta_{\eps_k(x)}(|x-y|) (u(y)-u(x))\right\}\Bigg).\notag
\end{align}
We now define the game-theoretic $k$-nearest neighbor graph $p$-Laplacian to be 
\begin{equation}\label{eq:graphLpknn}
\L^{n,k}_{a,p} u(x) = \frac{1}{p}\L^{n,k}_{a,rw} u(x)  + \left( 1-\tfrac{2}{p} \right)\L^{n,k}_{a,\infty} u(x).
\end{equation}
The main result in this section is consistency for the nonsymmetric $k$-nearest neighbor graph Laplacian.
\begin{theorem}[Consistency on nonsymmetic $k$-NN graphs]\label{thm:Cknna}
There exists $C_1,C_2,C_3,c_1,c_2>0$ such that for $k\leq c_1n$ and $0 < \lambda \leq 1/4$, the event that
\begin{equation*}
|\L^{n,k}_{a,p} u(x) - \rho(x)^{-2/d}\Delta_p u(x)| \leq C_1\left(\|u\|_{C^2(B_k)}^2 |\nabla u(x)|^{-1}\theta^{-1}\left( \tfrac{k}{n} \right)^{1/d} + \|u\|_{C^3(B_k)}\left(\lambda+\left( \tfrac{k}{n} \right)^{1/d}\right)  \right),
\end{equation*}
holds for all $u\in C^3(\bar{\Omega})$ and $x\in\Omega_{c_2(k/n)^{1/d}}\cap \X_n$, where $B_k=B(x,c_2(k/n)^{1/d})$,  has probability at least 
\[1 - C_2n^3\exp\left( -C_3 \left(\tfrac{k}{n}\right)^{1/2} k\lambda^{d/2} \right) - C_2n^3\exp\left( -C_3 \left(\tfrac{k}{n}\right)^{2/d}k\lambda^2 \right).\] 
\end{theorem}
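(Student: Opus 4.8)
The plan is to reduce the $k$-NN consistency statement to the already-established $\eps$-ball consistency (Theorem~\ref{thm:Ceps}, Lemma~\ref{lem:Linfrate}) by noting that, \emph{conditioned on the radius $\eps_k(x)$}, the nonsymmetric $k$-NN operator at a point $x$ is essentially an $\eps$-ball operator with $\eps = \eps_k(x)$ and a renormalized bandwidth factor $(n\alpha(d)/k)^{2/d}$ replacing $\eps^{-2}$. The first step is therefore a concentration estimate showing that with high probability, simultaneously for all $x\in \X_n$ away from $\partial\Omega$, the random radius $\eps_k(x)$ is comparable to the deterministic scale $(k/(n\rho(x)))^{1/d}$; precisely, one wants $|\,\alpha(d)n\eps_k(x)^d\rho(x) - k\,| \lesssim \lambda k$ (or the analogous two-sided bound) on an event of the stated probability. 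This is a binomial concentration argument: $N_{\eps}(x)$ is $\mathrm{Binomial}(n-1, p_\eps(x))$ with $p_\eps(x) = \int_{B(x,\eps)}\rho$, and choosing $\eps$ so that $n p_\eps(x) = (1\pm\lambda)k$ and applying a Chernoff bound gives failure probability $\exp(-c\lambda^2 k)$ per point, then a union bound over the $n$ points (actually over $n$ points and possibly over a net of candidate functions, which inflates the polynomial prefactor to $n^3$). The factor $(k/n)^{2/d}$ and $(k/n)^{1/d}$ appearing in the error terms and in the probability exponents are exactly what one gets by substituting $\eps \sim (k/n)^{1/d}$ into the $\eps$-ball rates of Theorem~\ref{thm:Ceps}.

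The second step is a Taylor expansion of the kernel normalization. On the good event, write $\eps_k(x) = \big(\tfrac{k}{n\alpha(d)\rho(x)}\big)^{1/d}(1+e(x))$ with $|e(x)|\lesssim \lambda$, and expand $(n\alpha(d)/k)^{2/d} = \eps_k(x)^{-2}\rho(x)^{2/d}(1+e(x))^{2} = \eps_k(x)^{-2}\rho(x)^{2/d}(1 + O(\lambda))$. Substituting this into \eqref{eq:graphRWknn} and \eqref{eq:graphLinfknn} shows
\[
\L^{n,k}_{a,rw}u(x) = \rho(x)^{2/d}\,\L^{n,\eps_k(x)}_{rw}u(x)(1+O(\lambda)), \qquad
\L^{n,k}_{a,\infty}u(x) = \rho(x)^{2/d}\,\L^{n,\eps_k(x)}_{\infty}u(x)(1+O(\lambda)),
\]
up to additive errors controlled by $\|u\|_{C^1}$ times $\lambda$. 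One must also check that, conditioned on $\eps_k(x)$, the remaining $n-k$ points outside $B(x,\eps_k(x))$ are still i.i.d.\ with density $\rho$ restricted appropriately—so that the $\eps$-ball consistency estimates apply verbatim with $\eps=\eps_k(x)$. (The points \emph{inside} the ball are not i.i.d.\ $\rho$ under conditioning, but the kernel $\eta$ is supported in $B(0,1)$, so all of $B(x,\eps_k(x))$ contributes; this is the one genuinely delicate conditioning point, and I would handle it by conditioning on the \emph{unordered} set of the $k$ nearest points and the radius, noting the $\eps$-ball proofs in \cite{calder2018game} only use that the empirical measure in $B(x,\eps)$ is close to $n\rho\,dx$, which the concentration step already gives directly without needing exact i.i.d.\ structure).

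The third step is bookkeeping: combine the rewritten random-walk and $\infty$-Laplace pieces according to the convex combination \eqref{eq:graphLpknn}, invoke Theorem~\ref{thm:Ceps} at scale $\eps = \eps_k(x) \sim (k/n)^{1/d}$ to get $\L^{n,\eps_k(x)}_{rw}u(x) \to \rho^{-2}\div(\rho^2\nabla u)(x)$ and $\L^{n,\eps_k(x)}_\infty u(x)\to \Delta_\infty u(x)$ with the stated rates, multiply by $\rho(x)^{2/d}$, and collect all error contributions ($O(\lambda)$ from the radius fluctuation, $O((k/n)^{1/d})$ from the discretization, and the $|\nabla u(x)|^{-1}\theta^{-1}$ term from the $\infty$-Laplacian as in Lemma~\ref{lem:Linfrate}). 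The resulting limit is $\rho(x)^{2/d}\big(\tfrac1p\rho^{-2}\div(\rho^2\nabla u) + (1-\tfrac2p)\Delta_\infty u\big) = \rho(x)^{-2/d}\big(\rho^{2/d}\cdot\rho^{2/d}\cdot(\cdots)\big)$—one should double-check the exponent, but the target is $\rho(x)^{-2/d}\Delta_p u(x)$ with $\Delta_p$ as in \eqref{eq:Dps}, and matching this fixes the precise power of $\rho$. The restriction to $x\in\Omega_{c_2(k/n)^{1/d}}$ is needed so that $B(x,\eps_k(x))\subset\Omega$ and boundary effects do not enter the expansion.

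\textbf{Main obstacle.} The hard part will be the uniform-in-$x$ control of the random radius $\eps_k(x)$ together with the conditioning issue: one needs the concentration of $N_\eps(x)$ to hold simultaneously for all $x\in\X_n$ and, after conditioning on which points are the $k$ nearest, to still be able to apply the i.i.d.-based $\eps$-ball estimates. Getting the exponents in the probability bound to come out as $(k/n)^{1/2}k\lambda^{d/2}$ and $(k/n)^{2/d}k\lambda^2$ (rather than something weaker) requires carefully tracking how $\eps\sim(k/n)^{1/d}$ and $n\eps^d\sim k$ feed into the $\eps$-graph probabilities $n\exp(-cn\eps^{3d/2}\lambda^{d/2})$ and $n\exp(-cn\eps^{d+2}\lambda^2)$ from Theorem~\ref{thm:Ceps}, and the extra powers of $n$ in the prefactor ($n^3$ vs.\ $n$) come from the union bound over $x\in\X_n$ compounded with the function-net argument already present in the $\eps$-ball statement.
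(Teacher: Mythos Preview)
Your high-level reduction to the $\eps$-ball result is correct, and the concentration estimate on $\eps_k(x)$ is exactly Lemma~\ref{lem:epsk} in the paper. However, you have misidentified the main obstacle. The conditioning issue you worry about---that after fixing $\eps_k(x)$ the points inside the ball are no longer i.i.d.\ with density $\rho$---is never confronted in the paper, because the paper does not condition at all. Instead, it uses a \emph{covering argument in the bandwidth}: one fixes a discrete grid $\Z_\delta \cap [a-\delta,b+\delta]$ of candidate bandwidths (with $a,b \sim (k/n)^{1/d}$ and $\delta = a^3$), applies Theorem~\ref{thm:Ceps} at each \emph{deterministic} grid value of $\eps$ via a union bound, and then uses that $\eps \mapsto \L^{n,\eps}_p u(x)$ is Lipschitz in $\eps$ (with constant $\lesssim \|u\|_{C^1}/\eps^2$) to extend the consistency estimate to every $\eps \in [a,b]$. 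On the high-probability event where $\eps_k(x)\in[a,b]$ for all $x$, one then simply substitutes the random value $\eps_k(x)$ into an estimate that already holds uniformly over that range---no conditional-distribution argument is needed. Your proposed workaround (arguing the empirical measure is still close to $n\rho\,dx$ after conditioning) might be made to work, but it is considerably more delicate and unnecessary.

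This covering argument is also the actual source of the $n^3$ prefactor, which you attribute to a function-net. Theorem~\ref{thm:Ceps} is already uniform over $u\in C^3(\bar\Omega)$ and over $x\in\X_n$ (contributing one factor of $n$); the union bound over the $O(\delta^{-1}) = O((n/k)^{3/d}) \le O(n^2)$ grid points in $\eps$ contributes the remaining $n^2$.

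Finally, your Taylor-expansion step for the normalization is more complicated than needed, and your uncertainty about the exponent of $\rho$ is resolved by writing down the exact algebraic identity
\[
\L^{n,k}_{a,p} u(x) \;=\; \frac{\eps_k(x)^2}{s_k(x)^2\,\rho(x)^{2/d}}\;\L^{n,\eps_k(x)}_p u(x),
\qquad s_k(x) = \left(\frac{k}{\alpha(d)\rho(x)n}\right)^{1/d},
\]
so that the factor $\rho(x)^{-2/d}$ appears directly and the only remaining approximation is $\eps_k(x)^2/s_k(x)^2 = 1 + O(\lambda + (k/n)^{2/d})$ from Lemma~\ref{lem:epsk}.
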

\begin{remark}
Notice that, up to the factor $\rho^{-2/d}$, the nonsymmetric $k$-NN graph $p$-Laplacian is consistent with the same operator $\Delta_p$ as the $\eps$-ball graph Laplacian. This is due to the ability to treat the $k$-NN Laplacian as an $\eps$-ball graph Laplacian with spatially varying $\eps$, since there is no symmetrization to be concerned with. The same observation as in Remark \ref{rem:pinf_eps} holds here; that is, the continuum operator $\Delta_p$ \emph{forgets} the distribution $\rho$ as $p\to \infty$. We also remark that the rates in Theorems \ref{thm:Cknna} and \ref{thm:Ceps} are essentially the same, since the quantity $\left( \frac{k}{n} \right)^{1/d}$ is the average distance to the $k^{\rm th}$ nearest neighbor (obtained by setting $n\eps^d \sim k$), and plays the same role as $\eps$ in Theorem \ref{thm:Ceps}. 
\label{rem:pinf_knna}
\end{remark}

We now turn to the proof of Theorem \ref{thm:Cknna}. To connect $\L^{n,k}_{a,p}$ to $\L^{n,\eps}_p$, we define
\begin{equation}\label{eq:epsx}
s_k(x) = \left( \frac{k}{\alpha(d)\rho(x)n} \right)^{1/d}.
\end{equation}
The radius $s_k(x)$ satisfies $\alpha(d) ns_k(x)^d\rho(x) = k$, so that the ball $B(x,s_k(x))$ contains on average $k$ points from $\X_n$. Then by algebraic manipulations we have
\begin{equation}\label{eq:nkne}
\L^{n,k}_{a,p} u(x) = \frac{\eps_k(x)^2}{s_k(x)^2\rho(x)^{2/d}}\L^{n,\eps_k(x)}_pu(x).
\end{equation}
This identity allows us to view the nonsymmetric $k$-nearest neighbor graph Laplacian $\L^{n,k}_{a,p}$ as an $\eps$-ball graph Laplacian for spatially varying $\eps=\eps(x)$, and consistency will follow by applying Theorem \ref{thm:Ceps} and a covering argument.

Before proving consistency, we recall some facts about $N_\eps(x)$ and $\eps_k(x)$.  These facts can be found in \cite{calder2019improved}, in the manifold setting, but we include the proofs here for the reader's convenience, as they are simpler in the Euclidean setting.
\begin{proposition}\label{prop:Nex}
For any $0 < t \leq 1$ and $x\in \Omega_\eps$ we have
\begin{equation}\label{eq:Nex}
\P( |N_\eps(x) - np(x,\eps)| \geq np(x,\eps) t) \leq 2 \exp\left(-\tfrac{3}{8}\alpha(d)\beta n\eps^d t^2\right),
\end{equation}
where $p(x,\eps) = \int_{B(x,\eps)}\rho(y)\, dy$.
\end{proposition}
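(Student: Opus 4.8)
The plan is to recognize $N_\eps(x)$ as a binomial random variable and apply a Bernstein-type concentration inequality. Since $X_1,\dots,X_n$ are i.i.d.\ with density $\rho$, the variables $Y_i := \one_{B(x,\eps)}(X_i)$ are i.i.d.\ Bernoulli with parameter $p(x,\eps) = \int_{B(x,\eps)}\rho(y)\,dy$, so $N_\eps(x) = \sum_{i=1}^n Y_i$ is $\mathrm{Binomial}(n,p(x,\eps))$ with mean $np(x,\eps)$ and variance $np(x,\eps)(1-p(x,\eps)) \leq np(x,\eps)$.

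Next I would apply Bernstein's inequality to the centered sum $N_\eps(x) - np(x,\eps) = \sum_{i=1}^n (Y_i - p(x,\eps))$, whose summands are bounded in absolute value by $1$ and have total variance at most $np(x,\eps)$. This gives, for every $s>0$,
\[
\P\big(|N_\eps(x) - np(x,\eps)| \geq s\big) \leq 2\exp\left(-\frac{s^2/2}{np(x,\eps) + s/3}\right).
\]
Taking $s = np(x,\eps)\,t$ and using $t \leq 1$, the denominator equals $np(x,\eps)\big(1 + \tfrac{t}{3}\big) \leq \tfrac{4}{3}\,np(x,\eps)$, so the exponent is bounded below by $\tfrac{3}{8}\,np(x,\eps)\,t^2$.

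Finally, I would bound $p(x,\eps)$ from below: because $x \in \Omega_\eps$, the ball $B(x,\eps)$ lies inside $\Omega$, so the density lower bound \eqref{eq:rho} yields $p(x,\eps) \geq \beta\,|B(x,\eps)| = \beta\,\alpha(d)\,\eps^d$. Substituting this into the exponent produces the claimed rate $\tfrac{3}{8}\,\alpha(d)\,\beta\,n\eps^d\,t^2$, finishing the argument. There is no deep obstacle here — this is a routine concentration estimate — but one point merits care: the plain multiplicative Chernoff bound yields only the constant $\tfrac{1}{3}$ in the exponent, which is too weak to give $\tfrac{3}{8}$, so one must work with the Bernstein form (or an equally sharp relative-entropy bound) together with the restriction $t \leq 1$ to extract the stated constant.
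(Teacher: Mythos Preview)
Your proof is correct and follows essentially the same route as the paper: identify $N_\eps(x)$ as a sum of i.i.d.\ Bernoulli variables, apply a concentration inequality, and then use $x\in\Omega_\eps$ together with $\rho\geq\beta$ to bound $p(x,\eps)\geq\alpha(d)\beta\eps^d$. The paper simply writes ``by the Chernoff bounds'' to obtain the $\tfrac{3}{8}$ constant, whereas you work it out explicitly via Bernstein's inequality and the restriction $t\leq 1$; your added remark that the common multiplicative Chernoff form with constant $\tfrac{1}{3}$ is insufficient is a valid observation and makes your argument more transparent than the paper's.
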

\begin{proof}
Note that $N_\eps(x)$ is the sum of Bernoulli zero/one random variables with parameter $p(x,\eps) = \int_{B(x,\eps)}\rho(x)\, dx$. By the Chernoff bounds we have
\[\P\left( |N_\eps(x) - np(x,\eps)| \geq np(x,\eps) t \right) \leq 2 \exp\left( -\frac{3}{8}np(x,\eps)t^2 \right)\]
for any $0 < t \leq 1$. The proof is completed by noting that $p(x,\eps) \geq  \alpha(d)\beta \eps^d$.
\end{proof}
\begin{lemma}\label{lem:epsk}
There exists $C_1,C_2,c_1,c_2>0$ such that for any $0 < t \leq 1/4$, $k\leq c_1n$, and $x\in \Omega_{c_2(k/n)^{1/d}}$, we have
\begin{equation}\label{eq:knndist}
\P\left(\left|\frac{\eps_k(x)}{s_k(x)} - 1\right| \geq t + C_1\left( \frac{k}{n} \right)^{2/d}\right) \leq 2 \exp\left(-C_2k t^2\right).
\end{equation}
\end{lemma}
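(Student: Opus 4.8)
The plan is to reduce everything to a concentration estimate for the count $N_\eps(x)$, which is exactly what Proposition \ref{prop:Nex} supplies. The key elementary observation is that, for fixed $x$, the function $\eps\mapsto N_\eps(x)$ is nondecreasing and right-continuous (its jumps occur precisely at the distances from $x$ to the sample points), so $\{\eps_k(x)\le r\}=\{N_r(x)\ge k\}$ for every $r>0$. Hence, writing $a = t + C_1(k/n)^{2/d}$ and $\eps^\pm = (1\pm a)s_k(x)$, the events $\{\eps_k(x)>\eps^+\}$ and $\{\eps_k(x)<\eps^-\}$ translate into $\{N_{\eps^+}(x)<k\}$ and $\{N_{\eps^-}(x)\ge k\}$, and it remains to show these have probability $\lesssim \exp(-C_2kt^2)$.

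Next I would establish the deterministic estimate on the mean of $N_\eps(x)$. Since $\rho\in C^2(\bar\Omega)$ and, for $x\in\Omega_\eps$, $B(x,\eps)\subset\Omega$, Taylor expansion together with $\int_{B(0,\eps)}z\,dz=0$ gives $p(x,\eps)=\int_{B(x,\eps)}\rho = \alpha(d)\rho(x)\eps^d\bigl(1+O(\eps^2)\bigr)$, the implied constant depending only on $\|\rho\|_{C^2}$ and $\beta$. Evaluating at $\eps^\pm$, using $\alpha(d)\rho(x)n s_k(x)^d=k$ and $s_k(x)^2\le(\alpha(d)\beta)^{-2/d}(k/n)^{2/d}$, this yields $n\,p(x,\eps^+)\ge k(1+a)^d\bigl(1-C(k/n)^{2/d}\bigr)$ and $n\,p(x,\eps^-)\le k(1-a)^d\bigl(1+C(k/n)^{2/d}\bigr)$. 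Using $(1+a)^d\ge 1+a$ and $(1-a)^d\le 1-a$, and choosing $C_1$ large (depending only on $C$ and $d$) so that the $C_1(k/n)^{2/d}$ slack inside $a$ absorbs the error $C(k/n)^{2/d}$, I obtain the clean bounds $n\,p(x,\eps^+)\ge k(1+t)$ and $n\,p(x,\eps^-)\le k(1-t)$. This is the point at which $c_1$ is fixed (to force $a\le\tfrac12$, so $\eps^->0$ and $\tfrac12\le 1\pm a\le\tfrac32$) and $c_2$ is fixed (so that $x\in\Omega_{c_2(k/n)^{1/d}}$ implies $x\in\Omega_{\eps^\pm}$, legitimizing both the Taylor expansion and Proposition \ref{prop:Nex}).

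For the probabilistic step, using the equivalence above and the mean bounds,
\begin{align*}
\P\bigl(\eps_k(x) > \eps^+\bigr) &= \P\bigl(N_{\eps^+}(x) < k\bigr) \le \P\Bigl(N_{\eps^+}(x) < \tfrac{n\,p(x,\eps^+)}{1+t}\Bigr)\\
&\le \P\Bigl(\bigl|N_{\eps^+}(x) - n\,p(x,\eps^+)\bigr| \ge n\,p(x,\eps^+)\cdot\tfrac{t}{1+t}\Bigr),
\end{align*}
and similarly $\P(\eps_k(x)<\eps^-)\le\P\bigl(|N_{\eps^-}(x)-n\,p(x,\eps^-)|\ge n\,p(x,\eps^-)\cdot\tfrac{t}{1-t}\bigr)$. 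For $0<t\le\tfrac14$ both relative deviations lie in $(0,1)$ and are at least $\tfrac45 t$, while $n(\eps^\pm)^d\ge 2^{-d}n s_k(x)^d = 2^{-d}k/(\alpha(d)\rho(x))\ge 2^{-d}\beta k/\alpha(d)$; Proposition \ref{prop:Nex} then bounds each of the two probabilities by $2\exp(-c\beta^2 kt^2)$. Adding them and absorbing the extra factor of $2$ into the exponent (the inequality is trivial when $kt^2$ is small since the right side exceeds $1$, and otherwise one uses $4e^{-2x}\le 2e^{-x}$ for $x\ge\tfrac12\ln2$) gives \eqref{eq:knndist}.

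The main obstacle is not a single hard estimate but the bookkeeping: one must verify that the perturbed radii $\eps^\pm$ remain in the regime where both the $C^2$ Taylor expansion of $\rho$ and Proposition \ref{prop:Nex} are valid — this is exactly what the hypotheses $k\le c_1n$ and $x\in\Omega_{c_2(k/n)^{1/d}}$ provide — and one must choose $C_1$ large enough that the $(k/n)^{2/d}$ error from the non-constancy of $\rho$ is dominated by the built-in slack in $a$. This unavoidable error term is precisely why the additive $C_1(k/n)^{2/d}$ appears in the statement and cannot be removed.
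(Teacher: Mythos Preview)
Your proof is correct and follows essentially the same approach as the paper's: both arguments rely on the duality $\{\eps_k(x)\le r\}=\{N_r(x)\ge k\}$, the Taylor expansion $p(x,\eps)=\alpha(d)\rho(x)\eps^d(1+O(\eps^2))$, and the Chernoff bound in Proposition~\ref{prop:Nex}. The only cosmetic difference is the order of operations---the paper fixes $\eps$ so that $np(x,\eps)=k(1+t)$ and then solves for $\eps/s_k(x)$, whereas you fix $\eps^\pm=(1\pm a)s_k(x)$ directly and then bound $np(x,\eps^\pm)$---but the content is the same.
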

\begin{proof}
For any $\eps>0$ we have
\[\P(\eps_k(x) \geq \eps) \leq \P(N_\eps(x) \leq k).\]
Choose $\eps>0$ so that $np(x,\eps)=k(1+t)$ for $t>0$. Then we have
\begin{align*}
\P\left(\eps_k(x) \geq \eps \right) &\leq \P(N_{\eps}(x)\leq k)\\
&=\P\left(N_{\eps}(x) - np(x,\eps)\leq -kt\right)\\
&\leq\P\left(N_{\eps}(x) - np(x,\eps)\leq -\tfrac{1}{2}np(x,\eps)t\right),
\end{align*}
provided $t\leq 1$. By Proposition \ref{prop:Nex} we have
\[\P\left(\eps_k(x) \geq \eps \right) \leq 2 \exp\left(-\tfrac{3}{32}\alpha(d)\beta n\eps^d t^2\right),\]
for all $0 < t \leq 1$. We now note that 
\[2 \geq 1 + t = \frac{n}{k}p(x,\eps) \geq \frac{n}{k}\alpha(d)\beta \eps^d,\]
and
\[1 \leq 1 + t = \frac{n}{k}p(x,\eps) \leq \frac{n}{k}\alpha(d)\beta^{-1} \eps^d.\]
Therefore
\begin{equation}\label{eq:rough}
\left(\frac{\beta}{\alpha(d)}\right)\frac{k}{n}\leq\eps^d \leq \left(\frac{2}{\alpha(d)\beta}\right)\frac{k}{n}.
\end{equation}
For a more refined estimate, we have
\begin{equation}\label{eq:TE}
p(x,\eps) = \int_{B(x,\eps)} \rho(y) \, dy = \alpha(d)\rho(x)\eps^d + O(\eps^{d+2}).
\end{equation}
Therefore
\[k(1+t) = np(x,\eps) \geq \alpha(d)\rho(x)n\eps^d(1-C\eps^2),\]
and so
\[\eps^d \leq \frac{k(1+t)}{\alpha(d)\rho(x)n(1-C\eps^2)} = \frac{s_k(x)^d(1+t)}{1-C\eps^2}\leq s_k(x)^d(1+t)(1+4C\eps^2),\]
provided $C\eps^2 \leq \frac{1}{2}$. Hence, due to \eqref{eq:rough}, there exists $c>0$ such that $k\leq cn$ implies $C\eps^2\leq \frac{1}{2}$ and so 
\[\eps^d \leq s_k(x)^d (1 + t + 8C\eps^2).\]
It follows that there exists $C_1,C_2>0$ such that
\[\P\left(\frac{\eps_k(x)}{s_k(x)} \geq 1 + t + C_1\left( \tfrac{k}{n} \right)^{2/d}\right) \leq 2 \exp\left(-C_2k t^2\right),\]
for any $0 < t \leq 1$. The proof for the estimate in the other direction is similar, but requires a restriction $t \leq 1/4$.
\end{proof}
\begin{proof}[Proof of Theorem \ref{thm:Cknna}]
Fix $0 < a < b\leq1$ with $na^d \geq 1$. Let $C_1,C_2,C_3>0$ so that Theorem \ref{thm:Ceps} holds and let $x\in \Omega_b$. For $0 < \delta \le a/2$ and $0 \leq \lambda \leq 1$, let $A_{\delta,\lambda}$ denote the event that
\[|\L^{n,\eps}_p u(x) - \Delta_p u(x)| \leq C_1\left(\|u\|_{C^2(B(x,\eps))}^2 |\nabla u(x)|^{-1}\theta^{-1}\eps + \|u\|_{C^3(B(x,\eps))}(\lambda+\eps)  \right) \]
holds for all $u\in C^3(\bar{\Omega})$ and all $\eps \in [a-\delta,b+\delta]\cap \Z_\delta$, where $\Z_\delta = \delta \Z$. By Theorem \ref{thm:Ceps} and a union bound we have
\[\P(A_{\delta,\lambda}) \geq  1 - C_2n\delta^{-1}\exp\left( -C_3 na^{3d/2}\lambda^{d/2} \right) - C_2n\delta^{-1}\exp\left( -C_3 na^{d+2}\lambda^2 \right).\]
For $\eps>0$, let us denote by $\lfloor \eps\rfloor_\delta$ the largest number belonging to the set $\Z_\delta:=\delta \Z$ that is less than or equal to $\eps$, and write $\lceil \eps\rceil_\delta = \lfloor \eps\rfloor_\delta + \delta$. Noting that
\[|\partial_\eps \eta_\eps(|x-y|)| = |x-y|\eps^{-2}\eta_\eps(|x-y|) \leq \frac{C}{\eps},\]
provided $|x-y|\leq \eps$, we can compute that
\[|\partial_\eps \L^{n,\eps}_p u(x)| \leq \frac{C\|u\|_{C^1(B(x,\eps)}}{\eps^2}\]
holds for all $u\in C^3(\bar{\Omega})$ and $\eps \in [a-\delta,b+\delta]$ with probability at least $1 - 2\exp\left( -c n a^d \right)$ for $c>0$, due to Proposition \ref{prop:Nex}. Therefore, when $A_{\delta,\lambda}$ occurs and $\eps_k(x) \in [a,b]$ we have
\[|\L^{n,\eps_k(x)}_p u(x) - \Delta_p u(x)| \leq C_1\left(\|u\|_{C^2(B(x,b+\delta))}^2 |\nabla u(x)|^{-1}\theta^{-1}\eps_k(x) + \|u\|_{C^3(B(x,b+\delta))}(\lambda+\eps_k(x))  \right), \]
provided we choose $\delta=a^3$. Using \eqref{eq:nkne} we can write
\[|\rho(x)^{2/d}\L^{n,k}_{a,p} u(x) - \Delta_pu(x)|\leq \frac{\eps_k(x)^2}{s_k(x)^2}|\L^{n,\eps_k(x)}_pu(x) - \Delta_p u(x)| + \left|\frac{\eps_k(x)^2}{s_k(x)^2} - 1 \right| |\Delta_pu(x)|.\]
We now invoke Lemma \ref{lem:epsk}, set $a = c_1(k/n)^{1/d}$ and $b = c_2(k/n)^{1/d}$ for constants $0 < c_1<c_2$, and union bound over $x\in\Omega_{c_2(k/n)^{1/d}}\cap \X_n$. The proof is completed by noting that $\delta^{-1} \leq Cn^{3/d}\leq Cn^2$.
%For any $a \leq \eps \leq b$, the Chernoff bound yields
%\[\P(N_{\lceil \eps\rceil_\delta}(x) - N_{\lfloor \eps\rfloor_\delta}(x) \geq c_1nb^{d-1}\delta) \leq \exp\left( -c_2na^{d-1}\delta \right) \]
%for constants $c_1,c_2>0$. By a union bound, we have
%\begin{equation}\label{eq:Nbounds}
%\P(\forall \eps \in [a,b],\, N_{\eps}(x) - N_{\lfloor \eps\rfloor_\delta}(x) \leq c_1nb^{d-1}\delta) \geq 1 -  \delta^{-1}\exp\left( -c_2na^{d-1}\delta \right).
%\end{equation}
%
%
\end{proof}

\subsection{Symmetric k-nearest neighbor graphs}
\label{sec:symknn}

We now consider symmetrized $k$-nearest neighbor graphs. Define
\begin{equation}\label{eq:rk}
\eps_k(x,y) = \max\{\eps_k(x),\eps_k(y)\}.
\end{equation}
The symmetric $k$-nearest neighbor random walk graph Laplacian is then defined by 
\begin{equation}\label{eq:graphRWsknn}
\L^{n,k}_{s,rw} u(x) = \frac{2}{\sigma_\eta  d^{n,k}_s(x)}\left(\frac{n\alpha(d)}{k} \right)^{2/d}\sum_{y\in \X_n}\eta_{\eps_k(x,y)}(|x-y|) (u(y)-u(x)),
\end{equation}
where the degree $d^{n,k}_s(x)$ is defined by
\begin{equation}\label{eq:dnks}
d^{n,k}_s(x) = \sum_{y\in \X_n}\eta_{\eps_k(x,y)}(|x-y|).
\end{equation}
We can also define a symmetrized graph using $\eps_k(x,y)=\min\{\eps_k(x),\eps_k(y)\}$, which would produce the \emph{mutual} $k$-NN graph, or any other suitable symmetric combination of $\eps_k(x)$ and $\eps_k(y)$. We expect the consistency results in this section to hold, with minor modifications, for many other types of symmetrization.

The graph $\infty$-Laplacian on the symmetric $k$-nearest neighbor graph  is defined by
\begin{align}\label{eq:graphLinfknn_sym}
\L_{s,\infty}^{n,k} u(x) &= \frac{1}{r_0^2\eta(r_0)}\left(\frac{n\alpha(d)}{k} \right)^{2/d}\Bigg(\min_{y\in \X_n}\left\{\eta_{\eps_k(x,y)}(|x-y|) (u(y)-u(x))\right\}\\
&\hspace{2.5in}+\max_{y\in \X_n}\left\{\eta_{\eps_k(x,y)}(|x-y|) (u(y)-u(x))\right\}\Bigg),\notag
\end{align}
and the graph $p$-Laplacian is defined by
\begin{equation}\label{eq:graphLpknnsym}
\L^{n,k}_{s,p} u(x) = \frac{1}{p}\L^{n,k}_{s,rw} u(x)  + \left( 1-\tfrac{2}{p} \right)\L^{n,k}_{s,\infty} u(x).
\end{equation}
The graph $p$-Laplacian on a symmetric $k$-NN graph is consistent in the continuum with the operator
\begin{equation}\label{eq:knnsymDel}
\Delta_p^s u = \rho^{-2/d}\left[\tfrac{1}{p}\rho^{-(1+2/d)}\div\left( \rho^{1-2/d}\nabla u \right) +(1-\tfrac{2}{p})\left(\Delta_\infty u-\tfrac{1}{d}\nabla \log \rho\cdot \nabla u\right)\right],
\end{equation}
as is shown in the following theorem.
\begin{theorem}[Consistency on symmetric $k$-NN graphs]\label{thm:knnsympLap}
There exists $C_1,C_2,C_3,c_1,c_2>0$ such that for $0  < t \leq 1/4$ and $k\leq c_1n$, the event that
\[\left|\L^{n,k}_{s,p}u(x)-\Delta_p^s u(x) \right|\leq C_1(1+|\nabla u(x)|^{-1}\|u\|^2_{C^2(B(x,\eps))} + \|u\|_{C^3(B(x,\eps))})(\delta_n\eps^{-2} + t+\eps)\]
holds for all $x\in \X_n\cap \Omega_\eps$, where $\eps= c_2(k/n)^{1/d}$, and for all $u\in C^3(\bar{\Omega})$, has probability at least $1-C_3n\exp\left( -C_2k\left( \tfrac{k}{n} \right)^{2/d}t^2 \right)$.
\end{theorem}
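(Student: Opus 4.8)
The plan is to follow the template of the proofs of Theorems~\ref{thm:Ceps} and~\ref{thm:Cknna}: fix a test function $u\in C^3(\bar\Omega)$ and a vertex $x\in\X_n\cap\Omega_\eps$, Taylor-expand $\L^{n,k}_{s,p}u(x)$, replace the sum over $\X_n$ by an integral against $\rho$ using a concentration inequality, identify the limiting operator, and conclude with a union bound over $x$. The new feature relative to the nonsymmetric case is that the effective bandwidth $\eps_k(x,y)=\max\{\eps_k(x),\eps_k(y)\}$ depends on both endpoints, so there is no exact rescaling identity analogous to \eqref{eq:nkne}; one must instead expand $\eps_k(y)$ about the deterministic radius $s_k(y)$ from \eqref{eq:epsx} and track how the $\max$ perturbs the kernel.

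First I would set $h=s_k(x)$ (so $h\asymp\eps$) and write $y=x+hz$. From $s_k(y)/s_k(x)=(\rho(x)/\rho(y))^{1/d}=1-\tfrac hd\,b\cdot z+O(h^2)$ with $b:=\nabla\log\rho(x)$, together with the estimate $\eps_k(y)=s_k(y)\bigl(1+O(t\eps+\eps^2)\bigr)$ supplied by Lemma~\ref{lem:epsk} (applied with error parameter $t\eps$ and union-bounded over $y\in\X_n$), one obtains
\[ \frac{\eps_k(x,y)}{s_k(x)}=1+\tfrac1d\,h\,(b\cdot z)_-+O(t\eps+\eps^2),\qquad (a)_-:=\max\{-a,0\}, \]
and hence
\[ \eta_{\eps_k(x,y)}(|x-y|)=\eta(|z|)-\tfrac hd\,|z|\,(b\cdot z)_-\,\eta'(|z|)+O(t\eps+\eps^2). \]
The point is that $(b\cdot z)_-$ is not an even function of $z$, so its moments against $z$ do not vanish; this is precisely the mechanism that produces the additional $\rho$-dependent drift terms in $\Delta^s_p$ which are absent from the $\eps$-ball and nonsymmetric $k$-NN limits. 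Using $t\eps$ (rather than $t$) in Lemma~\ref{lem:epsk} is what makes the weight error of size $O(t\eps)$ enter the final estimate only as $O(t)$ after the normalizing factor $(n\alpha(d)/k)^{2/d}\sim\eps^{-2}$, and it is also why the stated probability is $1-C_3n\exp\!\bigl(-C_2k(k/n)^{2/d}t^2\bigr)$: both this step and the concentration of the weighted sums (whose variance is of order $k\eps^2$) fail with probability at most $n\exp(-ck\eps^2t^2)$.

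For the diffusion part $\tfrac1p\L^{n,k}_{s,rw}u$, I would replace $\sum_y(\cdot)$ by $n\int(\cdot)\rho\,dy$ via Bernstein's inequality, change variables to $z$, expand $u(y)-u(x)$ and $\rho(y)$ to second order, and integrate termwise. The product of $-\tfrac hd|z|(b\cdot z)_-\eta'(|z|)$ with the leading part $h\,\nabla u(x)\cdot z$ of $u(y)-u(x)$, after passing to polar coordinates and evaluating the one-sided angular integral $\int_{S^{d-1}}(b\cdot\omega)_-(\nabla u\cdot\omega)\,d\omega=-\tfrac{|S^{d-1}|}{2d}\,b\cdot\nabla u$, yields a multiple of $\nabla\log\rho\cdot\nabla u$ that combines with the usual $\rho$-gradient term to turn the $\eps$-ball drift coefficient $2$ into $1-\tfrac2d$, giving the diffusion term $\rho^{-2/d}\bigl[\Delta u+(1-\tfrac2d)\nabla\log\rho\cdot\nabla u\bigr]$ of $\Delta^s_p$. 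For the $\infty$-Laplacian part I would rerun the $B^\pm(x)$ argument from the proof of Lemma~\ref{lem:Linfrate} — sandwiching $\max_y$ and $\min_y$ between their deterministic-weight versions by monotonicity of $\eps\mapsto\eta_\eps$, locating the maximizers $r^\pm$ within $O(\theta^{-1}|\nabla u|^{-1}\eps)$ of $r_0$ via the strict-maximum condition \eqref{eq:strictmax}, and using $\X_n$-density (Proposition~\ref{prop:distXn}) for the $\delta_n\eps^{-2}$ discretization error exactly as in \eqref{eq:exp1} — but now carrying the extra kernel term $-\tfrac hd|z|(b\cdot z)_-\eta'(|z|)$. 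Since the maximizing direction is $\approx\hat n=\nabla u/|\nabla u|$ and the minimizing one is $\approx-\hat n$, the extra term enters $B^+$ with $(b\cdot\hat n)_-$ and $B^-$ with $(b\cdot\hat n)_+$, so it survives in $B^+-B^-$ because $(a)_- - (a)_+ = -a$, with coefficient proportional to $r_0^3\eta'(r_0)$; the first-order optimality relation $r_0\eta'(r_0)+\eta(r_0)=0$ at the interior maximizer $r_0$ of $r\eta(r)$ then forces the surviving drift to equal exactly $-\tfrac1d\nabla\log\rho\cdot\nabla u$, uniformly in $p$. Adding the two contributions gives $\Delta^s_p u$, with the remaining $O(\eps)$ from Taylor truncation, and the union bound over $x\in\X_n\cap\Omega_\eps$ finishes the proof.

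The main obstacle is the nonsmoothness of the effective kernel: $\max\{s_k(x),s_k(y)\}$ has a crease along the surface $\{s_k(\cdot)=s_k(x)\}$, which near $x$ is the hyperplane $\{b\cdot(y-x)=0\}$, so the weight cannot be Taylor-expanded to all orders; one has to commit to the first-order one-sided expansion above, verify that the moment integrals are the ones claimed, and check that the extremal points for the $\infty$-Laplacian do not misbehave near the crease — which they do not, since $(b\cdot\hat n)_\pm$ and its derivative are controlled and the perturbation vanishes on the crease. With that in hand, the remainder is a careful but routine combination of the concentration bounds already used for Theorems~\ref{thm:Ceps} and~\ref{thm:Cknna} with Lemma~\ref{lem:epsk} and Proposition~\ref{prop:distXn}.
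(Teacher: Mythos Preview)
Your proposal is correct and follows the paper's overall strategy of treating the random-walk and $\infty$-Laplacian pieces separately, controlling $\eps_k(x,y)/s_k(x)$ via Lemma~\ref{lem:epsk} (applied with the rescaled parameter $t\eps$, which is exactly what produces the $k(k/n)^{2/d}t^2$ exponent), and running a $B^\pm$-type argument around the maximizer $r_0$ of $r\eta(r)$.

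There is one genuine technical difference worth noting. For $\L^{n,k}_{s,\infty}$, the paper does \emph{not} keep the skewed kernel and Taylor-expand it as you do; instead it absorbs the skewness by the change of variables $z=\Phi(y)=s_k(x)^{-1}(y-x)(1-[v\cdot(y-x)]_+)$ so that the kernel becomes exactly $\eta(|z|)$ and the perturbation shows up in the argument of $u$. The resulting cell problem $H_{r,\eps}(v,p,X)$ is analyzed directly (Proposition~\ref{prop:Hcell}), and the drift coefficient $-\tfrac{1}{d}$ drops out of the identity $[v\cdot\nabla u]_+-[-v\cdot\nabla u]_+=v\cdot\nabla u$ without ever invoking the first-order condition $r_0\eta'(r_0)+\eta(r_0)=0$. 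Your kernel-expansion route is also valid --- the relation $r_0\eta'(r_0)=-\eta(r_0)$ does hold since $r_0\in(0,1)$ is a smooth interior maximizer --- and it has the advantage of making the mechanism for the new drift term completely transparent; the paper's change of variables is perhaps cleaner in that it avoids differentiating $\eta$ and never has to discuss the support boundary or the crease $\{b\cdot z=0\}$. For the diffusion part, the paper simply quotes the unnormalized result from \cite{calder2019improved} and divides by the degree (Lemma~\ref{lem:dnks}); your direct moment computation reaches the same $1-\tfrac{2}{d}$ drift and is a legitimate alternative.
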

\begin{proof}
The proof simply combines Lemma \ref{lem:PCrwsknn} and Lemma \ref{lem:knnLinf}, both of which are proved below.
\end{proof}

\begin{remark}
It is important to point out the additional drift term $-\frac{1}{d}\nabla \log \rho\cdot \nabla u$ in $\Delta_p^s$, compared to $\Delta_p$. This term is due to the symmetrization of the $k$-nearest neighbor relation, and is a result of the Taylor expansion
\[\left( \frac{\rho(y)}{\rho(x)} \right)^{\frac{1}{d}} = 1 + \frac{1}{d}\nabla \log \rho(x)\cdot(y-x) + O(|x-y|^2),\]
which makes an appearance in the proof of Lemma \ref{lem:skewball} below. Since this term persists in the limit as $p\to \infty$, this shows that $p$-Laplacian learning for large $p$, and in particular, Lipschitz learning ($p=\infty$) are sensitive to the distribution  $\rho$  of the unlabeled data \emph{provided} the graph is constructed as a symmetrized $k$-nearest neighbor graph, as is often done in practice. This should be contrasted with the case of $\eps$-ball graphs and nonsymmetric $k$-NN graphs, where the continuum operator $\Delta_p$ \emph{forgets} the distribution $\rho$ as $p\to \infty$.
\label{rem:pinf_sym}
\end{remark}

We first record a result about $\eps_k(x,y)$, which was proved in \cite{calder2019improved}, but not explicitly stated as a lemma. We define $a_-=\min\{a,0\}$.
\begin{lemma}\label{lem:skewball}
There exists $C_1,C_2,c_1,c_2>0$ such that for any $0 < t \leq 1/4$,  $k\leq c_1n$ and $x\in \Omega_\eps$, the event that
\[\left| \frac{\eps_k(x,y)}{s_k(x)}\left(1 + \tfrac{1}{d}[\nabla \log \rho(x)\cdot (y-x)]_-\right)- 1\right| \leq t + C_1\eps^2,\]
holds for all $y\in \X_n\cap B(x,\eps)$, with $\eps=c_2(k/n)^{1/d}$, has probability at least $1-2n\exp\left( -C_2kt^2 \right)$.
\end{lemma}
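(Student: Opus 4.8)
The plan is to deduce Lemma~\ref{lem:skewball} from the single‑point estimate of Lemma~\ref{lem:epsk} by a union bound, combined with a second‑order Taylor expansion of $\rho$, together with the elementary observation that the symmetrized bandwidth $\eps_k(x,y)=\max\{\eps_k(x),\eps_k(y)\}$ is a $1$‑Lipschitz function of the two individual bandwidths. Throughout set $\eps=c_2(k/n)^{1/d}$; since $\beta\le\rho\le\beta^{-1}$ one has $s_k(z)\asymp(k/n)^{1/d}\asymp\eps$ for all $z$, so that $\eps$, $(k/n)^{1/d}$ and $s_k(z)$ may be used interchangeably inside $O(\cdot)$ terms. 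The relevant regime is $|x-y|\lesssim\eps$ (this is implicit in the statement and is the only regime that matters for the consistency proofs below, since the weight $\eta_{\eps_k(x,y)}(|x-y|)$ vanishes once $|x-y|>\eps_k(x,y)\asymp\eps$); there, Taylor remainders at scale $|x-y|$ are $O(\eps^2)$.

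\textbf{Step 1: uniform control of $\eps_k$.} Apply Lemma~\ref{lem:epsk} with parameter $t$ at each of the $n$ sample points and take a union bound: with probability at least $1-2n\exp(-C_2kt^2)$,
\[
\left|\frac{\eps_k(z)}{s_k(z)}-1\right|\le t+C\Big(\tfrac{k}{n}\Big)^{2/d}=t+C'\eps^2
\]
holds simultaneously for all $z\in\X_n\cap\Omega_\eps$; this already suffices for every downstream application, where only $x,y\in\X_n$ are evaluated, and the probability matches the asserted one exactly. If one insists on the literal statement over all $z\in\Omega_\eps$, the same bound at the points of a deterministic net of $\Omega$ of mesh $\sim\eps^3$, extended to arbitrary $z$ via the $1$‑Lipschitz continuity of $\eps_k(\cdot)$ and the smoothness of $s_k(\cdot)$, does the job at the cost of replacing $n$ by a harmless higher power of $n$. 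Everything that follows is deterministic on this event.

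\textbf{Step 2: Taylor expansion, the maximum, and conclusion.} Since $s_k(y)/s_k(x)=(\rho(x)/\rho(y))^{1/d}$ and $\rho\in C^2(\bar\Omega)$ with $\rho\ge\beta>0$, Taylor's theorem gives, for $|x-y|\lesssim\eps$,
\[
\frac{s_k(y)}{s_k(x)}=1-\tfrac1d\,\nabla\log\rho(x)\cdot(y-x)+O(\eps^2).
\]
Writing $L=\nabla\log\rho(x)\cdot(y-x)$, so that $|L|\lesssim\eps$, and combining with Step~1, $\eps_k(x)/s_k(x)=1+O(t+\eps^2)$ and $\eps_k(y)/s_k(x)=1-\tfrac1dL+O(t+\eps^2)$. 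As $\max$ is $1$‑Lipschitz with respect to the sup‑norm of its arguments,
\[
\frac{\eps_k(x,y)}{s_k(x)}=\max\Big\{\frac{\eps_k(x)}{s_k(x)},\frac{\eps_k(y)}{s_k(x)}\Big\}=\max\Big\{1,\,1-\tfrac1dL\Big\}+O(t+\eps^2)=1-\tfrac1d[L]_-+O(t+\eps^2),
\]
using $\max\{1,1-a\}=1-\min\{0,a\}$ and positive homogeneity of the negative part. Multiplying by $\big(1+\tfrac1d[L]_-\big)=1+O(\eps)$ and using $[L]_-^2\le L^2=O(\eps^2)$ yields
\[
\frac{\eps_k(x,y)}{s_k(x)}\Big(1+\tfrac1d[\nabla\log\rho(x)\cdot(y-x)]_-\Big)-1=O(t+\eps^2),
\]
which is the claim after replacing $t$ by $t/C$ at the start of Step~1 (absorbed into the constant $C_2$) and renaming $C_1$.

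\textbf{Main obstacle.} The only genuinely delicate point is Step~1: Lemma~\ref{lem:epsk} is pointwise, whereas here it is needed uniformly, so one must check that the union bound — over the $n$ sample points, which suffices for every downstream use, or over a net in the fully uniform version — does not degrade the failure probability beyond $2n\exp(-C_2kt^2)$. Step~2 is then routine propagation of $O(t+\eps^2)$ errors through a $1$‑Lipschitz maximum and a $C^2$ Taylor expansion; the one thing worth noting is that the negative part in the statement must be read as $[a]_-=\min\{0,a\}\le 0$, which is precisely what makes the product $\tfrac{\eps_k(x,y)}{s_k(x)}\big(1+\tfrac1d[L]_-\big)$ collapse to $1$ (the opposite sign convention would produce $(1+[L]_-)^2$ instead).
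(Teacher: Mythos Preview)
Your proof is correct and follows essentially the same route as the paper: a union bound of Lemma~\ref{lem:epsk} to make $\eps_k(z)/s_k(z)$ uniformly close to $1$, followed by a second-order Taylor expansion of $\rho$ and the $1$-Lipschitz property of $\max$. The only cosmetic difference is that the paper introduces the intermediate quantity $s_k(x,y)=\max\{s_k(x),s_k(y)\}$ and first bounds $\eps_k(x,y)/s_k(x,y)$, whereas you work directly with the two ratios $\eps_k(x)/s_k(x)$ and $\eps_k(y)/s_k(x)$; you are also more explicit than the paper about the implicit restriction $|x-y|\lesssim\eps$ and about the union bound being over $\X_n$ (which is all that is needed downstream) rather than all of $\Omega_\eps$.
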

\begin{proof}
Let $x\in \Omega_\eps$. By Lemma \ref{lem:epsk} and a union bound, there exists $C_1,C_2,c>0$ such that for any $0 < t \leq 1/4$ and $k\leq cn$ we have
\begin{equation}\label{eq:epsksk}
\max_{y\in \{x\}\cup\X_n\cap\Omega_\eps}\left|\frac{\eps_k(y)}{s_k(y)} - 1\right| \leq t + C_1\eps^2
\end{equation}
holds with probability at least $1-2n\exp\left(-C_2k t^2\right)$, where $\eps=c_2(k/n)^{1/d}$. We assume for the rest of the proof that \eqref{eq:epsksk} holds. For $y\in \X_n\cap B(x,\eps)$ it follows from \eqref{eq:epsksk} that
\[\left| \frac{\eps_k(x,y)}{s_k(x,y)} - 1\right| \leq t + C_1\eps^2,\]
where $s_k(x,y) = \max\{s_k(x),s_k(y)\}$. Since $|x-y|\leq \eps$, we compute
\begin{align*}
s_k(x,y)&=s_k(x)\max\left\{ 1,\frac{s_k(y)}{s_k(x)} \right\}\\
&=\frac{s_k(x)}{\min\left\{ 1,\frac{\rho(y)^{1/d}}{\rho(x)^{1/d}} \right\}}\\
&=\frac{s_k(x)}{\min\left\{ 1,1 + \frac{1}{d}\nabla \log \rho(x)\cdot (y-x) + O(\eps^2) \right\}}\\
&=\frac{s_k(x)}{1 + \frac{1}{d}[\nabla \log \rho(x)\cdot (y-x)]_- + O(\eps^2)}.
\end{align*}
Therefore
\[\left| \frac{\eps_k(x,y)}{s_k(x)}\left(1 + \tfrac{1}{d}[\nabla \log \rho(x)\cdot (y-x)]_-\right)- 1\right| \leq t + C_1\eps^2,\]
which completes the proof.
\end{proof}

\begin{lemma}\label{lem:dnks}
There exists $C_1,C_2,C_3,c>0$ such that for each $x\in \Omega_\eps$ with $\eps = c(k/n)^{1/d}$
\begin{equation}\label{eq:dnks_bound}
\left|\frac{\alpha(d)}{k} d^{n,k}_s(x) - 1\right| \leq C_1(t + \eps)
\end{equation}
holds with probability at least $1-C_2\exp\left( -C_3kt^2 \right)$ for any $0 < t \leq 1/4$.
\end{lemma}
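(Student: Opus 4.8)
The plan is to write $d^{n,k}_s(x) = \sum_{y\in\X_n}\eta_{\eps_k(x,y)}(|x-y|)$ as a sum of (almost) i.i.d.\ bounded random variables, apply a concentration inequality, and then compute the expectation up to $O(\eps)$ via a Taylor expansion, using Lemma \ref{lem:skewball} to replace the data-dependent radius $\eps_k(x,y)$ by the deterministic quantity $s_k(x)$. First I would condition on the high-probability event from Lemma \ref{lem:skewball}, so that for all $y\in\Omega_\eps$ we may write $\eps_k(x,y) = s_k(x)\big(1 + \tfrac{1}{d}[\nabla\log\rho(x)\cdot(y-x)]_-\big)^{-1}(1 + O(t+\eps))$. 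Since $\eta$ is Lipschitz and nonincreasing with $\eta_\eps(r)=\eta(r/\eps)$, perturbing the radius by a multiplicative factor $1+O(t+\eps)$ changes each summand $\eta_{\eps_k(x,y)}(|x-y|)$ by $O((t+\eps)\cdot \|\eta'\|_\infty)$ relative to $\eta_{\bar s(x,y)}(|x-y|)$, where $\bar s(x,y) = s_k(x)\big(1 + \tfrac{1}{d}[\nabla\log\rho(x)\cdot(y-x)]_-\big)^{-1}$; summing over the $O(k)$ nonzero terms (only $y$ within distance $\approx s_k(x)$ of $x$ contribute) gives a total error of $O(k(t+\eps))$, hence $O(t+\eps)$ after multiplying by $\alpha(d)/k$.

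Next I would handle the remaining sum $\sum_{y\in\X_n}\eta_{\bar s(x,y)}(|x-y|)$. This is now a sum of i.i.d.\ random variables (functions of $X_i$ only, with $x$ fixed), each bounded by $\sup\eta$ and supported on $|X_i - x|\le s_k(x)$. A Bernstein/Chernoff bound as in Proposition \ref{prop:Nex} shows this sum concentrates around its mean within relative error $t$ with probability at least $1 - C\exp(-ckt^2)$, using that the mean is of order $k$ (the ball of radius $\approx s_k(x)$ contains on average $\approx k$ points and $\eta\ge 1$ on the inner half-ball). The expectation is
\[
n\int_{\Omega} \eta_{\bar s(x,y)}(|x-y|)\,\rho(y)\,dy = n\int_{B(x,s_k(x))}\eta\!\left(\tfrac{|x-y|}{s_k(x)}\big(1+\tfrac{1}{d}[\nabla\log\rho(x)\cdot(y-x)]_-\big)\right)\rho(y)\,dy.
\]
Substituting $y = x + s_k(x)z$, expanding $\rho(y) = \rho(x) + O(\eps)$ and using $n s_k(x)^d\rho(x)\alpha(d) = k$ by the definition \eqref{eq:epsx} of $s_k$, the leading term is $\tfrac{k}{\alpha(d)}\int_{B(0,1)}\eta\big(|z|(1+\tfrac{1}{d}[\nabla\log\rho(x)\cdot z]_-)\big)\,dz$ plus $O(\eps\cdot k)$. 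The key computation is that this integral equals $1$: one checks that the substitution $z\mapsto w$ with $|w| = |z|(1+\tfrac{1}{d}[\nabla\log\rho(x)\cdot z]_-)$ along each ray has Jacobian that, to leading order, exactly compensates, reducing the integral to $\int_{B(0,1)}\eta(|w|)\,dw = 1$ by the normalization \eqref{eq:etaprob}; the $\tfrac{1}{d}$ factor is precisely what makes this work, analogously to how $s_k(y)/s_k(x) = (\rho(x)/\rho(y))^{1/d}$ enters.

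Combining the three error contributions — the radius-replacement error $O(t+\eps)$, the concentration error $O(t)$, and the Taylor-expansion error $O(\eps)$ — and taking $\eps = c(k/n)^{1/d}$ yields \eqref{eq:dnks_bound} on an event of probability at least $1 - C_2\exp(-C_3 k t^2)$, after absorbing the Lemma \ref{lem:skewball} failure probability $2n\exp(-C_2 k t^2)$ into the constants (note the statement already carries such an $n$-free bound, so one should be slightly careful — the union bound over $y$ in Lemma \ref{lem:skewball} is only needed for the single fixed $x$ here, so a version without the factor $n$ suffices). I expect the main obstacle to be the Jacobian computation showing the normalized integral is exactly $1$: one must be careful that the non-smooth function $a\mapsto[a]_-$ does not spoil the leading-order cancellation, which I would handle by splitting the ball $B(0,1)$ into the half where $\nabla\log\rho(x)\cdot z < 0$ and the half where it is $\ge 0$, computing each piece separately, and checking the first-order terms cancel against the $\rho(y)$ expansion on the complementary half.
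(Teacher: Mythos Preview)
Your overall architecture---condition on Lemma~\ref{lem:skewball}, replace the data-dependent radius, apply Bernstein/Chernoff, then compute the mean---matches the paper's proof. The difference is that you retain more precision from Lemma~\ref{lem:skewball} than is needed, and this creates an artificial obstacle.

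The drift term in Lemma~\ref{lem:skewball}, namely $\tfrac{1}{d}[\nabla\log\rho(x)\cdot(y-x)]_-$, is itself $O(\eps)$ because $|y-x|\le \eps_k(x,y)=O(\eps)$ and $\nabla\log\rho$ is bounded. Hence Lemma~\ref{lem:skewball} already gives the crude consequence $\big|\eps_k(x,y)/s_k(x)-1\big|\le C(t+\eps)$, and the paper simply replaces $\eps_k(x,y)$ by the constant $s_k(x)$ rather than by your $\bar s(x,y)$. After that replacement, $d^{n,k}_s(x)=d^{n,s_k(x)}(x)+O(k(t+\eps))$, and the remaining degree $d^{n,s_k(x)}(x)$ is a genuine $\eps$-ball degree whose concentration around $\rho(x)ns_k(x)^d=\alpha(d)^{-1}k$ is standard. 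No Jacobian is required.

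Your route would also work if carried out correctly, but there is a slip in your substitution: after $y=x+s_k(x)z$ you write the integrand as $\eta\big(|z|(1+\tfrac{1}{d}[\nabla\log\rho(x)\cdot z]_-)\big)$, which drops the factor $s_k(x)$ in the drift. The correct argument to $\eta$ is $|z|\big(1+\tfrac{1}{d}s_k(x)[\nabla\log\rho(x)\cdot z]_-\big)$, and the drift is then manifestly $O(\eps)$, so the integral is $\int_{B(0,1)}\eta(|z|)\,dz+O(\eps)=1+O(\eps)$ by \eqref{eq:etaprob}. The delicate Jacobian cancellation you anticipate is an artifact of that dropped factor; once restored, the ``main obstacle'' evaporates. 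Your closing remark about the $n$-free failure probability is a fair observation: the paper's own proof invokes Lemma~\ref{lem:skewball}, which carries a factor $n$, so the constants in the statement should be read somewhat loosely.
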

\begin{proof}
By Lemma \ref{lem:skewball} there exists $c>0$ so that for every $x\in \Omega_\eps$, where $\eps=c(k/n)^{1/d}$, we have
\begin{equation}\label{eq:ekxyexp}
\left|\frac{s_k(x)}{\eps_k(x,y)} - 1 \right| \leq C(t + \eps)
\end{equation}
for all $y\in \X_n\cap B(x,\eps)$ with probability at least $1 - 2n\exp\left( -C_2kt^2 \right)$. Notice the drift term involving $\nabla \log \rho(x) \cdot (y-x)_-$ form Lemma \ref{lem:skewball} is absorbed into the $O(\eps)$ error term on the right hand side, since $|x-y|\leq \eps$. Assuming \eqref{eq:ekxyexp} holds we have
\begin{align*}
d^{n,k}_s(x) &= \sum_{y\in \X_n}\eta\left( \frac{|x-y|}{\eps_k(x,y)} \right)\\
&=\sum_{y\in \X_n}\eta\left(\frac{|x-y|}{s_k(x)}(1 + O(t + \eps))\right) \\
&=\sum_{y\in \X_n}\eta\left(\frac{|x-y|}{s_k(x)}\right) + O(N_{Cs_k(x)}(x)(t+\eps)),
\end{align*}
for $C>0$, provided $t\leq 1$ and $\eps\leq 1$.  By Proposition \ref{prop:Nex} we have $N_{Cs_k(x)}(x)\leq \bar{C}k$ with probability at least $1-\exp\left( -ck \right)$.  Thus,
\[d^{n,k}_s(x) = d^{n,s_k(x)}(x) + O(k(t + \eps)).\]
By the Bernstein inequality (see, e.g., \cite[Theorem 5]{calder2018game}) we have that
\[d^{n,s_k(x)}(x) = \rho(x) ns_k(x)^d + O(ns_k(x)^{d}t) = \alpha(d)^{-1}k + O\left( kt \right),\]
with probability at least $1-2\exp\left( -ck t^2 \right)$. This completes the proof.
\end{proof}

We now prove pointwise consistency for the random walk graph Laplacian $\L^{n,k}_{s,rw}$ on a symmetric $k$-nearest neighbor graph. The analogous result for the unnormalized graph Laplacian was established in \cite{calder2019improved}.
\begin{lemma}\label{lem:PCrwsknn}
There exists $C_1,C_2,C_3,c_1,c_2>0$ such that for $0  < t \leq 1/4$ and $k\leq c_1n$, the event that
\begin{equation*}
\left|\L^{n,k}_{s,rw} u(x) - \rho(x)^{-1}\div\left( \rho^{1 - 2/d}\nabla u \right)(x) \right| \leq C_1(1 +\|u\|_{C^3(B_k)})\left(t+\left( \tfrac{k}{n} \right)^{1/d}\right) 
\end{equation*}
holds for all $u\in C^3(\bar{\Omega})$ and $x\in\Omega_{c_2(k/n)^{1/d}}\cap \X_n$, where $B_k=B(x,c_2(k/n)^{1/d})$,  has probability at least $1 -C_2n\exp\left( -C_3 \left(\tfrac{k}{n}\right)^{2/d}kt^2 \right)$.
\end{lemma}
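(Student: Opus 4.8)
The plan is to follow the standard pointwise-consistency recipe for graph Laplacians — replace the random sum over $\X_n$ by a deterministic integral against $\rho$, rescale, and Taylor expand — using the two ingredients tailored to the symmetrized $k$-NN construction that are already in place: Lemma~\ref{lem:dnks} for the random degree $d^{n,k}_s(x)$, and Lemma~\ref{lem:skewball} (via Lemma~\ref{lem:epsk}) to replace the data-dependent, direction-biased radius $\eps_k(x,y)=\max\{\eps_k(x),\eps_k(y)\}$ by the deterministic $s_k(x,y)=\max\{s_k(x),s_k(y)\}$, which to leading order equals $s_k(x)\bigl(1+\tfrac1d[\nabla\log\rho(x)\cdot(y-x)]_-\bigr)^{-1}$. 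Since \cite{calder2019improved} proves the analogous statement for the \emph{unnormalized} symmetric $k$-NN Laplacian (in the manifold setting), one clean route is: use Lemma~\ref{lem:dnks} to write $\tfrac{\alpha(d)}{k}d^{n,k}_s(x)=1+O(t+\eps)$ with $\eps=c_2(k/n)^{1/d}$, quote \cite{calder2019improved} in the Euclidean specialization for the unnormalized sum, and combine; the $(1+O(t+\eps))$ factor multiplying the bounded limit operator only produces an extra $O((1+\|u\|_{C^3(B_k)})(t+\eps))$. What follows sketches the self-contained argument.

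After the degree reduction it remains to estimate $\tfrac{2\alpha(d)}{\sigma_\eta k}(\tfrac{n\alpha(d)}{k})^{2/d}\sum_{y\in\X_n}\eta_{\eps_k(x,y)}(|x-y|)(u(y)-u(x))$. I would (i) replace $\eps_k(x,y)$ by $s_k(x,y)$ in the kernel, so the summand becomes a deterministic function of the sample point; (ii) apply the Bernstein estimate of \cite[Theorem~5]{calder2018game} — the summand is supported in $B(x,Cs_k(x))$, containing $O(k)$ of the $X_i$, bounded by $O(\eps\|u\|_{C^1})$ and of variance $O(\eps^{d+2}\|u\|_{C^1}^2)$, so after the normalization $\asymp 1/(k\eps^2)$ the fluctuation is $O((1+\|u\|_{C^3(B_k)})t)$ with probability at least $1-C\exp(-ck(k/n)^{2/d}t^2)$ — to pass to $\tfrac{2\alpha(d)}{\sigma_\eta k}(\tfrac{n\alpha(d)}{k})^{2/d}\,n\int_\Omega\eta_{s_k(x,y)}(|x-y|)(u(y)-u(x))\rho(y)\,dy$; and (iii) substitute $y=x+s_k(x)z$, use $ns_k(x)^d=k/(\alpha(d)\rho(x))$ and $(\tfrac{n\alpha(d)}{k})^{2/d}=\rho(x)^{-2/d}s_k(x)^{-2}$, and expand $\eta$, $u$, $\rho$ to second order. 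The zeroth-order term cancels, the first-order term is odd, and the second-order terms give $\rho(x)^{-2/d}\sigma_\eta$ times $\tfrac12\Delta u(x)$ (from $D^2u$), $\sigma_\eta\,\nabla\log\rho(x)\cdot\nabla u(x)$ (from expanding $\rho$), and the radius-drift term; for the last, writing $[t]_-=\tfrac12(t-|t|)$ to discard the odd half and integrating by parts to evaluate $\int_{\R^d}\eta'(|z|)|z|^3\,dz=-(d+2)d\,\sigma_\eta$, it equals $-\tfrac{d+2}{2d}\sigma_\eta\,\nabla\log\rho(x)\cdot\nabla u(x)$. Summing, the bracket is $\tfrac12\Delta u+(\tfrac12-\tfrac1d)\nabla\log\rho\cdot\nabla u$, and after multiplying by the prefactor $2\rho(x)^{-2/d}$ one obtains
\[
\rho(x)^{-2/d}\bigl[\Delta u+(1-\tfrac2d)\nabla\log\rho\cdot\nabla u\bigr]=\rho(x)^{-1}\div(\rho^{1-2/d}\nabla u)(x),
\]
as required.

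The main obstacle is step (i). By Lemma~\ref{lem:skewball} the relative error $\eps_k(x,y)/s_k(x,y)-1$ is $O(t+\eps^2)$, and since $\eta$ is merely Lipschitz the crude per-term bound $O(t+\eps^2)$, summed over $O(k)$ terms against $|u(y)-u(x)|=O(\eps\|u\|_{C^1})$ and multiplied by the normalization $\asymp 1/(k\eps^2)$, yields an error of order $\tfrac{t}{\eps}\|u\|_{C^1}$, which is worse than the claimed $O(t+\eps)$. To recover the sharp rate one first splits $u(y)-u(x)=\nabla u(x)\cdot(y-x)+O(\eps^2\|u\|_{C^2})$ — the remainder is comfortably within budget — and then, for the linear part, separates the kernel perturbation into its $O(\eps)$ component, which is precisely the deterministic drift $[\nabla\log\rho(x)\cdot(y-x)]_-$ already captured in the Taylor expansion of step (iii), and its residual $O(t)$ fluctuation, which is a single random number attached to each vertex (coming from $\eps_k(y)/s_k(y)-1$) and must be controlled by concentration rather than worst case, after reducing to a sum over the neighbors $y$ with $\eps_k(y)>\eps_k(x)$; this is the delicate estimate worked out in \cite{calder2019improved}. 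Once it is in place, every error contribution is $O((1+\|u\|_{C^3(B_k)})(t+\eps))$, and the stated probability follows by union-bounding the good events of Lemmas~\ref{lem:dnks} and \ref{lem:skewball} and of the Bernstein inequality over the $n$ vertices $x\in\X_n$, the Bernstein bound $n\exp(-ck(k/n)^{2/d}t^2)$ being dominant.
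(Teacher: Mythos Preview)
Your approach is essentially the paper's: reduce $\L^{n,k}_{s,rw}$ to the unnormalized symmetric $k$-NN Laplacian via Lemma~\ref{lem:dnks} and then invoke \cite[Theorem~3.6]{calder2019improved} for the latter, with your self-contained sketch correctly identifying the delicate $\eps_k(x,y)\to s_k(x,y)$ replacement as the nontrivial step handled there. The one point you do not address is how to pass from the fixed-$u$ estimate (which is what \cite{calder2019improved} and your Bernstein step deliver) to a single high-probability event on which the bound holds \emph{for all} $u\in C^3(\bar{\Omega})$; the paper does this explicitly by Taylor expanding $u$ about $x$ to second order \emph{before} concentrating and applying the fixed-function estimate only to the finitely many basis polynomials $p_i(y)=y_i-x_i$ and $p_ip_j$, so that the probability bound no longer depends on $u$.
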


\begin{proof}
Let $u\in C^3(\bar{\Omega})$ and $x \in \Omega$. Let us define the unnormalized $k$-nearest neighbor graph Laplacian
\[Lu(x)  = \frac{2}{\sigma_\eta n}\left( \frac{n\alpha(d)}{k} \right)^{1 + 2/d}\sum_{y\in \X_n}\eta_{\eps_k(x,y)}(|x-y|)(u(y) - u(x)).\]
Then by \cite[Theorem 3.6]{calder2019improved} we have that
\[\left|Lu(x) - \rho(x)^{-1}\div\left( \rho^{1 - 2/d}\nabla u \right)(x) \right| \leq C(1 + \|u\|_{C^3(B(x,\eps))})(t + \eps)  \]
holds for all $x\in \Omega_\eps\cap \X_n$ with probability at least $1 - Cn\exp\left( -c\left( \tfrac{k}{n} \right)^{2/d}kt^2 \right)$ for $0 \leq t \leq 1/4$, $k\leq c_2n$, and $\eps = c_2(k/n)^{1/d}$ with $c_1,c_2>0$. Noting that
\[\L^{n,k}_{s,rw}u(x) = \frac{k}{\alpha(d)d^{n,k}_s(x)}Lu(x)\]
and invoking Lemma \ref{lem:dnks} we have
\[\left|\L^{n,k}_{s,rw}u(x) - \rho(x)^{-1}\div\left( \rho^{1 - 2/d}\nabla u \right)(x) \right| \leq C(1 + \|u\|_{C^3(B(x,\eps))})(t + \eps)  \]
holds for all $x\in \Omega_\eps\cap \X_n$ with probability at least $1 - Cn\exp\left( -c\left( \tfrac{k}{n} \right)^{2/d}kt^2 \right)$.

To prove uniformity over $u\in C^3(\bar{\Omega})$, we fix $x\in \Omega_\eps$ and $u\in C^3(\bar{\Omega})$ and we first Taylor expand $u$ to find
\[u(y) = u(x) + \nabla u(x)\cdot (y-x) + \frac{1}{2}(y-x)^T \nabla^2u(x)(y-x) + O(\|u\|_{C^3(B(x,\eps))}\eps^3)\]
for $|x-y|\leq \eps$.  Therefore
\begin{equation}\label{eq:Lnkexp}
\L^{n,k}_{s,rw}u(x) = \sum_{i=1}^d u_{x_i}(x)\L_{s,rw}^{n,k}p_i(x) + \frac{1}{2}\sum_{i,j=1}^du_{x_ix_j}(x)\L^{n,k}_{s,rw}(p_ip_j)(x) + O(\|u\|_{C^3(B(x,\eps))}\eps), 
\end{equation}
where $p_i(y) = y_i-x_i$ and the error term is controlled with probability at least $1-C\exp\left( -ck \right)$, due to Lemma \ref{lem:dnks}. By the argument above
\[|\L_{s,rw}^{n,k}p_i(x) - (1-\tfrac{2}{d})\rho(x)^{-(1+2/d)}\rho_{x_i}(x)| \leq C(t+\eps),\]
and
\[|\L_{s,rw}^{n,k}(p_ip_j)(x) - 2\rho(x)^{-2/d}\delta_{ij} | \leq C(t+\eps)\]
hold for all $x\in \Omega_\eps\cap \X_n$ with probability at least $1 - Cn\exp\left( -c\left( \tfrac{k}{n} \right)^{2/d}kt^2 \right)$, where $\delta_{ij}=1$ if $i=j$ and $\delta_{ij}=0$ otherwise. Substituting this into \eqref{eq:Lnkexp} we have
\begin{align*}
\L^{n,k}_{s,rw}u(x) &=(1-\tfrac{2}{d})\rho^{-(1+2/d)}\sum_{i=1}^d u_{x_i}(x)\rho_{x_i}(x)+ \rho(x)^{-2/d}\sum_{i,j=1}^du_{x_ix_i}(x)\delta_{ij} + O(\|u\|_{C^3(B(x,\eps))}\eps), \\
&=\rho(x)^{-2/d}\left( (1-\tfrac{2}{d})\rho(x)^{-1}\nabla \rho(x)\cdot \nabla u(x) + \Delta u(x) \right)+ O(\|u\|_{C^3(B(x,\eps))}\eps)\\
&=\rho(x)^{-1}\div\left( \rho^{1 - 2/d}\nabla u \right)(x)  + O(\|u\|_{C^3(B(x,\eps))}\eps),
\end{align*}
which completes the proof.
\end{proof}

We require the following simple proposition.
\begin{proposition}\label{prop:max}
Let $\psi\in C^1(\bar{\Omega})$ and assume $\psi$ attains its maximum value over $\bar{\Omega}$ at some $x_0\in \Omega$. Then we have
\begin{equation}\label{eq:maxbound}
\left|\max_{x\in \Omega}\psi(x) - \max_{x\in \X_n}\psi(x)\right| \leq \|\nabla \psi\|_{L^\infty(\Omega)}\delta_n
\end{equation}
\end{proposition}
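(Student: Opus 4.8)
The plan is to establish the two inequalities $0 \le \max_{\Omega}\psi - \max_{\X_n}\psi$ and $\max_{\Omega}\psi - \max_{\X_n}\psi \le \|\nabla\psi\|_{L^\infty(\Omega)}\delta_n$ separately; combined they give \eqref{eq:maxbound}. The first is immediate: since $\X_n\subset\Omega$, every value $\psi(y)$ with $y\in\X_n$ is bounded above by $\max_{\Omega}\psi$, and hence so is $\max_{\X_n}\psi$.

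For the reverse inequality, let $x_0\in\Omega$ be an interior point at which $\psi$ attains its maximum over $\bar\Omega$. By the definition of $\delta_n$ we have $\dist(x_0,\X_n)\le\delta_n$, so there is a sample point $y\in\X_n$ with $|x_0-y|\le\delta_n$ (if the infimum is not attained, pick $y$ with $|x_0-y|\le\delta_n+\eps$ and send $\eps\to0$; since $x_0$ is interior and $\delta_n$ is small the segment $[x_0,y]$ lies in $\Omega$). As $\psi\in C^1(\bar\Omega)$, integrating $\nabla\psi$ along this segment yields $|\psi(x_0)-\psi(y)|\le\|\nabla\psi\|_{L^\infty(\Omega)}|x_0-y|\le\|\nabla\psi\|_{L^\infty(\Omega)}\delta_n$. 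Therefore $\max_{\X_n}\psi\ge\psi(y)\ge\psi(x_0)-\|\nabla\psi\|_{L^\infty(\Omega)}\delta_n=\max_{\Omega}\psi-\|\nabla\psi\|_{L^\infty(\Omega)}\delta_n$, which is the claimed bound.

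This is essentially a one-line Lipschitz estimate, so there is no substantive obstacle. The only mild technical point is ensuring that the segment joining the interior maximizer $x_0$ to its nearest sample point stays inside $\Omega$, so that the fundamental theorem of calculus applies with the stated $L^\infty$ bound on $\nabla\psi$; this is harmless because $x_0$ is an interior point and $\delta_n$ is small (for $n$ large), and in any case can be bypassed by extending $\psi$ to a $C^1$ function on a neighborhood of $\bar\Omega$.
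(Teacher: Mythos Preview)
Your proof is correct and follows essentially the same approach as the paper: both pick a sample point within distance $\delta_n$ of the interior maximizer $x_0$ and apply the Lipschitz bound $|\psi(x_0)-\psi(y)|\le \|\nabla\psi\|_{L^\infty(\Omega)}|x_0-y|$. You are simply a bit more explicit about the trivial direction and the technical issue of the segment staying in $\Omega$, which the paper omits.
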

\begin{proof}
For any $x\in \X_n$ we have
\[\psi(x_0) - \psi(x) \leq \|\nabla \psi\|_{L^\infty(\Omega)}|x_0-x|,\]
and therefore
\[\delta_n \geq \min_{x\in \X_n}|x_0-x| \geq \|\nabla \psi\|_{L^\infty(\Omega)}^{-1}(\psi(x_0) - \max_{x\in \X_n}\psi(x)),\] 
where we recall $\delta_n$ was defined in \eqref{eq:rne}. 
\end{proof}
We define
\begin{equation}\label{eq:H}
H_{r,\eps}(v,p,X) = \max_{|z|=r}\left\{ p\cdot(z +[v\cdot z]_+\eps z) + \frac{\eps}{2}z^TXz  \right\}.
\end{equation}
The following proposition can be viewed as an extension of Proposition \ref{prop:maxprop}, which applies when $v=0$. It is used to evaluate the $\max$ and $\min$ in the graph $\infty$-Laplacian, asymptotically after Taylor expansions, in Lemma \ref{lem:knnLinf} below.
\begin{proposition}\label{prop:Hcell}
For any $\eps,r>0$, $p\in \R^d$ with $p\neq 0$ and $X\in \R^{d\times d}$ we have
\[\left|H_{r,\eps}(v,p,X) - r|p| - [v\cdot p]_+ r^2\eps - \frac{r^2\eps}{2|p|^2}p^T Xp\right|\leq 2|p|^{-1}(|p||v| + \|X\|)^2 r^3 \eps^2.\]
\end{proposition}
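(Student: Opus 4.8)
The plan is to analyze the maximization in the definition \eqref{eq:H} of $H_{r,\eps}(v,p,X)$ by rescaling to the unit sphere and treating the terms involving $\eps$ as a perturbation of the leading-order linear term $p\cdot z$. Write $z = r\omega$ with $|\omega|=1$, so that
\[
H_{r,\eps}(v,p,X) = \max_{|\omega|=1}\left\{ r\, p\cdot \omega + r^2\eps\,[v\cdot \omega]_+ \,p\cdot\omega + \tfrac{r^3\eps}{2}\,\omega^T X\omega \right\}.
\]
The unperturbed problem $\max_{|\omega|=1} r\,p\cdot\omega$ has the unique maximizer $\omega^\star = p/|p|$ (here we use $p\neq 0$), with value $r|p|$. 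The idea is that the maximizer $\omega_\eps$ of the full expression is within $O(\eps)$ of $\omega^\star$, and plugging $\omega^\star$ into the $\eps$-order terms already gives the claimed main term; the discrepancy is then $O(\eps^2)$.

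The key steps, in order, are: (i) Lower bound: evaluate the bracketed expression at the specific choice $\omega = \omega^\star = p/|p|$, which gives exactly $r|p| + r^2\eps\,[v\cdot p]_+/|p|\cdot|p| = r|p| + r^2\eps[v\cdot p]_+ \cdot (\text{wait, careful})$ — more precisely $r^2\eps [v\cdot\omega^\star]_+\, p\cdot\omega^\star = r^2\eps\,[v\cdot p/|p|]_+\,|p| = r^2\eps[v\cdot p]_+$, plus $\tfrac{r^3\eps}{2}(\omega^\star)^T X\omega^\star = \tfrac{r^3\eps}{2|p|^2}p^TXp$; this shows $H_{r,\eps}(v,p,X)$ is \emph{at least} the claimed main term, with no error. (ii) Upper bound: for the maximizer $\omega_\eps$, first establish a crude a priori estimate $|\omega_\eps - \omega^\star| \leq C\eps$ by comparing the value at $\omega_\eps$ against the value at $\omega^\star$ and using the strong concavity of $\omega\mapsto p\cdot\omega$ on the sphere near its max, namely $r|p| - r\,p\cdot\omega \geq \tfrac{r|p|}{2}|\omega-\omega^\star|^2$; the size of the $\eps$-order terms forces $|\omega_\eps-\omega^\star|^2 \lesssim \eps(|p||v|+\|X\|)r^2/|p| \cdot r$, giving $|\omega_\eps - \omega^\star| \lesssim (|p||v|+\|X\|)^{1/2}\,r\,\eps^{1/2}/|p|^{1/2}$ roughly. (iii) Refine: substitute $\omega_\eps = \omega^\star + h$ with $h$ small into the bracket, Taylor expand each term — $r\,p\cdot\omega_\eps = r|p| - \tfrac{r|p|}{2}|h|^2 + O(|h|^2\cdot\text{l.o.t.})$ using $p\cdot h \le 0$ to second order from the constraint, and the $\eps$-order terms change by $O(\eps(|p||v|+\|X\|)r^2|h|)$ — and collect: the total is the main term plus $O(\eps(|p||v|+\|X\|)r^2|h| + |p|r|h|^2)$, and inserting the bound on $|h|$ from step (ii) yields the $O(|p|^{-1}(|p||v|+\|X\|)^2 r^3\eps^2)$ error. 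Tracking constants carefully should land on the stated factor $2$.

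The main obstacle I anticipate is the bookkeeping in step (iii): one must be careful that the second-order expansion of the constraint $|\omega_\eps|=1$ is used correctly (so that $p\cdot h$ contributes at order $|h|^2$, not $|h|$), and that the a priori bound from step (ii) is sharp enough that squaring it and multiplying by the right prefactors reproduces exactly the homogeneity $r^3\eps^2$ and the quadratic dependence on $(|p||v|+\|X\|)$ with constant at most $2$. A cleaner alternative that avoids the Lagrange-multiplier bookkeeping is to parametrize $\omega$ by the angle to $\omega^\star$ and reduce to a one-variable optimization, or simply to bound directly: writing $F(\omega)$ for the bracket, we have $0 \le \max F - F(\omega^\star) = F(\omega_\eps) - F(\omega^\star)$, and $F(\omega_\eps) - F(\omega^\star) = r\,p\cdot(\omega_\eps-\omega^\star) + \eps\,(\text{difference of }\eps\text{-terms})$; the first piece is $\le 0$ and in fact $\le -\tfrac{r|p|}{2}|\omega_\eps - \omega^\star|^2$, while the second is $\le C\eps(|p||v|+\|X\|)r^2|\omega_\eps-\omega^\star| + \text{(already-counted main term evaluated at }\omega^\star)$, so rearranging gives both the a priori bound and, upon re-substitution, the final $O(\eps^2)$ estimate in one pass. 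This Young's-inequality-style argument ($ab \le \tfrac14 a^2\cdot\text{(coeff)} + b^2/\text{(coeff)}$) is the quickest route to the explicit constant $2$.
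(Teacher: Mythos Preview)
Your ``cleaner alternative'' at the end is essentially the paper's proof: the paper takes the maximizer $z_r$, sets $w_r=rp/|p|$, uses the exact identity $r|p|-p\cdot z_r=\tfrac{|p|}{2r}|w_r-z_r|^2$ (which is your ``strong concavity'' inequality as an equality), bounds the difference of the $\eps$-order terms between $z_r$ and $w_r$ by a Lipschitz estimate $(|p||v|+\|X\|)|w_r-z_r|\,r\eps$, and divides through to get $|w_r-z_r|\le 2|p|^{-1}(|p||v|+\|X\|)r^2\eps$. Substituting back gives the bound with the stated constant $2$.

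Your three-step route as written has a real gap, however. In step (ii) you bound the \emph{size} of the $\eps$-order terms rather than the \emph{difference} between their values at $\omega_\eps$ and $\omega^\star$; this only yields $|\omega_\eps-\omega^\star|=O(\eps^{1/2})$, as you note. Plugging that into the error expression in step (iii), namely $O(\eps r^2(|p||v|+\|X\|)|h|+r|p||h|^2)$, produces terms of order $\eps^{3/2}$ and $\eps$, not $\eps^2$. The fix is exactly the Lipschitz-difference bound you identify in the alternative (and which the paper uses): bounding $F(\omega_\eps)-F(\omega^\star)$ with the $\eps$-terms contributing $O(\eps r^2(|p||v|+\|X\|)|h|)$ rather than $O(\eps r^2(|p||v|+\|X\|))$ gives $|h|=O(\eps)$ directly, and then the same inequality immediately yields the $O(\eps^2)$ final bound in one pass.

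Two minor slips to clean up: in your rescaling $z=r\omega$ the quadratic term is $\tfrac{r^2\eps}{2}\omega^TX\omega$, not $\tfrac{r^3\eps}{2}$ (the proposition's main term has $r^2\eps$ in front of $p^TXp/|p|^2$, consistent with $r^2$); and the ``strong concavity'' you invoke is actually an identity on the sphere, $1-a\cdot b=\tfrac12|a-b|^2$ for unit $a,b$, which is what makes the constant come out cleanly.
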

\begin{proof}
Let $z_r$ such that $|z_r|=r$ and 
\[H_{r,\eps}(v,p,X) = (p\cdot z_r)(1 + [v\cdot z_r]_+\eps) + \frac{\eps}{2}z_r^T Xz_r.\]
Set $w_r = rp/|p|$. Choosing $z=w_r$ yields
\[H_{r,\eps}(v,p,X) \geq r|p|(1 + [v\cdot w_r]_+\eps) + \frac{\eps}{2}w_r^T Xw_r.\]
Note that for any unit vectors $a,b$ we have
\[1 - a\cdot b = \frac{1}{2}(2 - 2a\cdot b) = \frac{1}{2}|a-b|^2.\]
Therefore
\[r|p| - p\cdot z_r = r|p|\left( 1 - \frac{p}{|p|}\cdot \frac{z_r}{r} \right) = \frac{r|p|}{2}\left| \frac{p}{|p|} -\frac{z_r}{r} \right|^2=\frac{|p|}{2r}\left| w_r -z_r \right|^2.\]
Combining this with the observations above we have
\begin{align*}
\frac{|p|}{2r}\left| w_r -z_r \right|^2&=r|p| - p\cdot z_r\\
&\leq \left( (p\cdot z_r)[v\cdot z_r]_+ + \frac{1}{2}z_r^TXz_r - r|p|[v\cdot w_r]_+ - \frac{1}{2}w_r^TXw_r  \right)\eps\\
&\leq  \left( r|p|\left([v\cdot z_r]_+ - [v\cdot w_r]_+\right) + \frac{1}{2}\left( z_r^TXz_r - w_r^TXw_r \right)  \right)\eps\\
&\leq \left(r|p||v||w_r - z_r| + r\|X\| |w_r-z_r| \right)\eps\\
&= (|p||v| + \|X\|)|w_r-z_r| r\eps.
\end{align*}
Therefore
\[|w_r-z_r| \leq 2\left( |v| + |p|^{-1}\|X\| \right)r^2\eps = 2|p|^{-1}\left( |p||v| + \|X\| \right)r^2\eps,\]
and so
\begin{align*}
H_{r,\eps}(v,p,X) -& p\cdot(w_r + [v\cdot w_r]_+ \eps w_r) - \frac{\eps}{2}w_r^T Xw_r\\
&=p\cdot(z_r + [v\cdot z_r]_+ \eps z_r)+ \frac{\eps}{2}z_r^TX z_r- p\cdot(w_r + [v\cdot w_r]_+ \eps w_r) - \frac{\eps}{2}w_r^T Xw_r\\
&=p \cdot z_r - p\cdot w_r + [v\cdot z_r]_+(p\cdot z_r)\eps -  [v\cdot w_r]_+(p\cdot w_r)\eps + \frac{1}{2}\left( z_r^TXz_r - w_r^TXw_r \right)\eps\\
&\leq p\cdot z_r - r|p| + r|p|\left([v\cdot z_r]_+ - [v\cdot w_r]_+\right)\eps + \|X\||w_r-z_r|r\eps \\
&\leq |p||v||w_r - z_r|r\eps + \|X\||w_r-z_r|r\eps \\
&= (|p||v| + \|X\|)|w_r-z_r|r\eps\\
&\leq 2|p|^{-1}(|p||v| + \|X\|)^2 r^3 \eps^2,
\end{align*}
which completes the proof.
\end{proof}

\begin{lemma}\label{lem:knnLinf}
There exists constants $C_1,C_2,c_1,c_2>0$ such that for $0 < t \leq 1$ and $k\leq c_1 n$, the event that
\begin{align*}
&\left|\L^{n,k}_{s,\infty}u(x)-\rho(x)^{-2/d}\left(\Delta_\infty u(x)-\tfrac{1}{d}\nabla \log \rho(x)\cdot \nabla u(x) \right) \right|\\
&\hspace{2in}\leq C_1\left|(1+|\nabla u(x)|^{-1}\|u\|^2_{C^2(B(x,\eps))} + \|u\|_{C^3(B(x,\eps))})(\delta_n\eps^{-2} + t+\eps)\right|
\end{align*}
holds for all $x\in \X_n\cap \Omega_\eps$, where $\eps= c_2(k/n)^{1/d}$, and for all $u\in C^3(\bar{\Omega})$, has probability at least $1-2n\exp\left( -C_2k\left( \tfrac{k}{n} \right)^{2/d}t^2 \right)$.
\end{lemma}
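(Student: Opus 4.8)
The plan is to adapt the argument for the $\eps$-ball graph (Lemma~\ref{lem:Linfrate}) to the symmetrized $k$-NN setting, where the extra work comes from the direction-dependent bandwidth. Fix $x\in\X_n\cap\Omega_\eps$ with $\nabla u(x)\neq0$ and abbreviate $p=\nabla u(x)$, $X=\nabla^2u(x)$, $v=\nabla\log\rho(x)$, and $\bar\eps=s_k(x)$ (which is comparable to $\eps$ by \eqref{eq:rho}). Using $(n\alpha(d)/k)^{2/d}=\rho(x)^{-2/d}\bar\eps^{-2}$ I would first rewrite
\[
\L^{n,k}_{s,\infty}u(x)=\frac{\rho(x)^{-2/d}}{r_0^2\eta(r_0)\,\bar\eps^{2}}\Bigl(\min_{y\in\X_n}g(y)+\max_{y\in\X_n}g(y)\Bigr),\qquad g(y):=\eta_{\eps_k(x,y)}(|x-y|)\,(u(y)-u(x)),
\]
so the task reduces to expanding $\max_{\X_n}g$ and $\min_{\X_n}g$ through order $\bar\eps^2$: the $O(\bar\eps)$ parts must cancel in the sum, and the $O(\bar\eps^2)$ parts must combine into $r_0^2\eta(r_0)\bar\eps^{2}\bigl(\Delta_\infty u(x)-\tfrac1d\,v\cdot p\bigr)$.

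Next I would carry out two reductions. (i) \emph{Smoothing the bandwidth.} Apply Lemma~\ref{lem:skewball} with its parameter taken to be $\eps t$ — this yields exactly the asserted probability $1-2n\exp(-C_2k\eps^2t^2)$ and, since $k\le c_1n$, keeps the hypothesis $\eps t\le1/4$ — to replace $\eps_k(x,y)$ by $\bar\eps(x,y):=s_k(x)\bigl(1+\tfrac1d[v\cdot(y-x)]_-\bigr)^{-1}$ at relative error $O(\eps t+\eps^2)$; since $|u(y)-u(x)|=O(\bar\eps|p|)$ on $\supp g$, this perturbs $\max g$ and $\min g$ by $O(\bar\eps^2|p|(t+\eps))$, i.e.\ by $O(\|u\|_{C^3}(t+\eps))$ after the $\bar\eps^{-2}$ normalization. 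Write $\bar g(y):=\eta_{\bar\eps(x,y)}(|x-y|)(u(y)-u(x))$, extended by $0$ outside $B(x,C\bar\eps)\subset\Omega$. (ii) \emph{Discretization.} One checks $\bar g$ is Lipschitz on $\bar\Omega$ with $\|\nabla\bar g\|_{L^\infty}\le C(|p|+\bar\eps\|u\|_{C^2})\le C\|u\|_{C^3}$ and attains its extrema in the interior, so Proposition~\ref{prop:max} (whose proof uses only the Lipschitz bound) gives $|\max_{\X_n}\bar g-\max_\Omega\bar g|\le C\|u\|_{C^3}\delta_n$ and likewise for the min; this is the $\delta_n\eps^{-2}$ term after normalization, with $\delta_n$ later controlled by Proposition~\ref{prop:distXn}.

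The heart of the proof is the resulting continuum cell problem for $\max_\Omega\bar g$ and $\min_\Omega\bar g$. Substituting $y-x=\bar\eps z$ and Taylor expanding $u$ and $\eta$ gives, for each $r\ge0$,
\[
\max_{|z|=r}\bar g=\bar\eps\,\eta(r)\max_{|z|=r}\Bigl\{p\cdot z+\bar\eps\bigl(\tfrac{\kappa(r)}{d}[v\cdot z]_-(p\cdot z)+\tfrac12z^TXz\bigr)\Bigr\}+O\bigl(\bar\eps^{3}(|p|+\|u\|_{C^3})\bigr),\qquad\kappa(r):=\tfrac{r\eta'(r)}{\eta(r)}.
\]
Because $p\cdot z$ is strictly maximized at $z=rp/|p|$, the inner maximizer lies within $O(\bar\eps)$ of that point, and there the kink linearizes: $[v\cdot z]_-=v\cdot z$ if $v\cdot p<0$ and $[v\cdot z]_-=0$ if $v\cdot p>0$. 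Hence with $\tilde v:=\tfrac{\kappa(r)}{d}v$ (when $v\cdot p<0$) or $\tilde v:=0$ (otherwise) the inner maximum equals $H_{r,\bar\eps}(\tilde v,p,X)$ from \eqref{eq:H}, to which Proposition~\ref{prop:Hcell} applies and yields (using $[\tilde v\cdot p]_+=\tfrac{\kappa(r)}{d}[v\cdot p]_-$ in all cases, the $|p|^{-1}\|X\|^2$ term supplying the $|\nabla u|^{-1}\|u\|^2_{C^2}$ factor)
\[
\max_{|z|=r}\bar g=\bar\eps r\eta(r)|p|+\bar\eps^{2}r^{2}\eta(r)\Bigl(\tfrac{\kappa(r)}{d}[v\cdot p]_-+\tfrac12\Delta_\infty u(x)\Bigr)+O\bigl(\bar\eps^{3}(|p|+|p|^{-1}\|u\|^2_{C^2}+\|u\|_{C^3})\bigr),
\]
and, by replacing $z$ with $-z$ (so $[-a]_-=-[a]_+$), the analogous formula for $\min_{|z|=r}\bar g$ with $-\bar\eps r\eta(r)|p|$ in front and $[v\cdot p]_+$ in place of $[v\cdot p]_-$. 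Optimizing over $r\in[0,1]$: the leading $\pm\bar\eps r\eta(r)|p|$ is optimized at $r=r_0$ by \eqref{eq:strictmax}, the $O(\bar\eps)$ displacement of the optimal radius/direction costs only $O(\bar\eps^{3}|p|^{-1}\theta^{-1}\|u\|^2_{C^2})$ since $(r_0,p/|p|)$ is a critical point, and the first-order optimality $\eta(r_0)+r_0\eta'(r_0)=0$ hidden in \eqref{eq:strictmax} gives $\kappa(r_0)=-1$. Adding the two expansions, the drift pieces combine via $[v\cdot p]_-+[v\cdot p]_+=v\cdot p$ to produce $\max_\Omega\bar g+\min_\Omega\bar g=\bar\eps^2r_0^2\eta(r_0)\bigl(\Delta_\infty u(x)-\tfrac1d\nabla\log\rho(x)\cdot\nabla u(x)\bigr)+O(\cdots)$; dividing by $r_0^2\eta(r_0)\bar\eps^2$ and multiplying by $\rho(x)^{-2/d}$ gives the stated bound, with each error term above landing under the prefactor $(1+|\nabla u|^{-1}\|u\|^2_{C^2}+\|u\|_{C^3})$ times $(\delta_n\eps^{-2}+t+\eps)$.

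The main obstacle — and the only place the symmetric $k$-NN structure genuinely differs from the $\eps$-ball case — is that $\eps_k(x,y)$ depends on the direction $y-x$ through the nonsmooth bracket $[\nabla\log\rho(x)\cdot(y-x)]_-$, so the cell problem is no longer the clean radial optimization of Lemma~\ref{lem:Linfrate}; the linearization-near-the-extremum step above is what tames it, reducing it to $H_{r,\bar\eps}$ and, through $\kappa(r_0)=-1$, producing the drift coefficient $-\tfrac1d$. The borderline case $\nabla\log\rho(x)\cdot\nabla u(x)=0$ (kink exactly at the maximizer) I would handle by continuity — perturb $u$ by a small linear function and pass to the limit, since both sides depend continuously on $u$ while the probability event does not — and the case $\nabla\log\rho(x)=0$ is immediate because then $[\nabla\log\rho(x)\cdot(y-x)]_-\equiv0$ and the argument collapses verbatim to that of Lemma~\ref{lem:Linfrate}.
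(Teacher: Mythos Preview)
Your proposal is correct and complete, but it follows a genuinely different route from the paper's proof. The paper does \emph{not} Taylor expand $\eta$; instead it performs a nonsmooth change of variables
\[
z=\Phi(y):=\frac{1}{s_k(x)}(y-x)\bigl(1-[v\cdot(y-x)]_+\bigr),\qquad v=-\tfrac1d\nabla\log\rho(x),
\]
whose inverse $y=x+s_kz+[v\cdot z]_+s_k^2z+O(\eps^3)$ makes the weight exactly radial, $\eta(|z|)$. The anisotropy is thereby pushed entirely into the Taylor expansion of $u(\Phi^{-1}(z))$, and the resulting inner problem is literally $H_{r,s_k}(v,\nabla u,\nabla^2u)$ with no preliminary linearization of the bracket. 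The drift coefficient $-1/d$ then falls out of the Jacobian of $\Phi^{-1}$ combined with $[v\cdot p]_+-[-v\cdot p]_+=v\cdot p$.

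Your approach, by contrast, keeps the bandwidth in $\eta$ and expands it, producing the factor $\kappa(r)=r\eta'(r)/\eta(r)$; the drift coefficient $-1/d$ then emerges from the first-order optimality $\eta(r_0)+r_0\eta'(r_0)=0$ implicit in \eqref{eq:strictmax}, i.e.\ $\kappa(r_0)=-1$. This is a nice structural observation the paper's argument hides. The price you pay is the extra linearization of $[v\cdot z]_-$ near the extremizer (and the attendant case split on $\mathrm{sgn}(v\cdot p)$), which the paper avoids entirely because Proposition~\ref{prop:Hcell} already handles the $[v\cdot z]_+$ coming from $\Phi^{-1}$. Both arguments optimize over $r$ via \eqref{eq:strictmax} and control the $|r-r_0|=O(\bar\eps)$ displacement in the same way; both reduce the discretization error via Proposition~\ref{prop:max}; and both invoke Lemma~\ref{lem:skewball} with parameter $O(\eps t)$ to land on the stated probability. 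In short: the paper trades your differentiation of $\eta$ and kink-linearization for one clever coordinate change, while your route makes the role of the criticality of $r\mapsto r\eta(r)$ explicit.
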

\begin{proof}
By Lemma \ref{lem:skewball} we have
\[\eta_{\eps_k(x,y)}(|x-y|) = \eta\left( \frac{|x-y|}{\eps_k(x,y)} \right) = \eta\left( \frac{|x-y|(1-[v\cdot(y-x)]_+}{s_k(x)} \right) + O(t\eps + \eps^2), \]
for $|x-y|\leq \eps_k(x,y)$, where $v = -\tfrac{1}{d}\nabla \log \rho(x)$, with probability at least $1-2n\exp\left( -C_2k\eps^2t^2 \right)$, where $0 < t \leq 1$ and $\eps = c(k/n)^{1/d}$. Therefore, by Proposition \ref{prop:max} we have
\begin{align*}
\max_{y\in \X_n}&\left\{\eta_{\eps_k(x,y)}(|x-y|) (u(y)-u(x))\right\}\\
&=\max_{y\in \X_n}\left\{\eta\left(\frac{|x-y|(1 - [v\cdot (y-x)]_+)}{s_k(x)}\right) (u(y)-u(x))\right\} + O(\|u\|_{C^1(B(x,\eps))}\eps^2(t+\eps))\\
&=\max_{y\in \Omega}\left\{\eta\left(\frac{|x-y|(1 - [v\cdot (y-x)]_+)}{s_k(x)}\right) (u(y)-u(x))\right\} + O\left( (1+\|u\|_{C^1(B(x,\eps))})(\delta_n + \eps^2t+\eps^3)\right)\\
\end{align*}
We now make a change of variables, setting
\[z = \Phi(y):= \frac{1}{s_k(x)}(y-x)(1 -  [v\cdot (y-x)]_+).\]
For $y$ sufficiently close to $x$, depending only on $|v|$, the mapping $y\mapsto \Phi(y)$ is invertible and 
\[y = \Phi^{-1}(z) = x + s_k z +[v\cdot z]_+s_k^2 z + O(\eps^3). \] 
where we write $s_k=s_k(x)$ for simplicity, and note that $s_k = O(\eps)$.
Therefore, using Proposition \ref{prop:Hcell} we have
\begin{align*}
\max_{y\in \X_n}&\left\{\eta_{\eps_k(x,y)}(|x-y|) (u(y)-u(x))\right\} \\
&=\max_{z\in B(0,1)}\left\{\eta(|z|) (u(x +s_k z + [v\cdot z]_+s_k^2z)-u(x))\right\}+ O\left( (1+\|u\|_{C^1(B(x,\eps))})(\delta_n + \eps^2t+\eps^3)\right) \\
&=s_k\max_{0 \leq r \leq 1}\left\{\eta(r)\max_{|z|=r}\left\{ \nabla u(x)\cdot ( z +[v\cdot z]_+s_k z) + \tfrac{s_k}{2}z^T\nabla^2 u(x)z\right\}\right\} \\
&\hspace{2in}+ O\left((1+\|u\|_{C^3(B(x,\eps))})(\delta_n + \eps^2t+\eps^3)\right)\\
&=s_k\max_{0\leq r \leq 1}\left\{ \eta(r) H_{r,s_k}(v,\nabla u(x),\nabla ^2u(x)) \right\} +O\left((1+\|u\|_{C^3(B(x,\eps))})(\delta_n + \eps^2t+\eps^3)\right)\\
&=s_k\max_{0\leq r \leq 1}\left\{ r\eta(r)|\nabla u(x)| + \left([v\cdot \nabla u(x)]_+  + \tfrac{1}{2}\Delta_\infty u(x)\right)r^2\eta(r)s_k\right\} \\
&\hspace{1in}+O\left((1+|\nabla u(x)|^{-1}\|u\|^2_{C^2(B(x,\eps))} + \|u\|_{C^3(B(x,\eps))})(\delta_n + \eps^2t+\eps^3)\right).
\end{align*}
Let us set
\[B = \max_{0\leq r \leq 1}\left\{ r\eta(r)|\nabla u(x)| + \left([v\cdot \nabla u(x)]_+  + \tfrac{1}{2}\Delta_\infty u(x)\right)r^2\eta(r)s_k\right\}\]
and let $r_1 \in [0,1]$ so that
\[B = r_1\eta(r_1)|\nabla u(x)| + \left([v\cdot \nabla u(x)]_+  + \tfrac{1}{2}\Delta_\infty u(x)\right)r_1^2\eta(r_1)s_k.\]
By \eqref{eq:strictmax} we have
\begin{align*}
B&\geq r_0\eta(r_0)|\nabla u(x)| + \left([v\cdot \nabla u(x)]_+  + \tfrac{1}{2}\Delta_\infty u(x)\right)r_0^2\eta(r_0)s_k\\
&\geq \theta(r_1-r_0)^2|\nabla u(x)|+  r_1\eta(r_1)|\nabla u(x)| + \left([v\cdot \nabla u(x)]_+  + \tfrac{1}{2}\Delta_\infty u(x)\right)r_0^2\eta(r_0)s_k.
\end{align*}
It follows that
\[\theta(r_1-r_0)^2|\nabla u(x)|\leq \left([v\cdot \nabla u(x)]_+  + \tfrac{1}{2}\Delta_\infty u(x)\right)\left( r_0^2\eta(r_0) - r_1^2\eta(r_1) \right)s_k\]
and so
\[|r_1-r_0|\leq C\theta^{-1}\left(1 + |\nabla u(x)|^{-1}|\Delta_\infty u(x)|\right)s_k.\]
This yields 
\begin{align*}
B &= r_0\eta(r_0)|\nabla u(x)| +\left([v\cdot \nabla u(x)]_+  + \tfrac{1}{2}\Delta_\infty u(x)\right)r_0^2\eta(r_0)s_k\\
&\hspace{2in}+O\left((1+|\nabla u(x)|^{-1})\|u\|^2_{C^2(B(x,\eps))}\eps^2\right).
\end{align*}
Inserting this above we have
\begin{align*}
\max_{y\in \X_n}&\left\{\eta_{\eps_k(x,y)}(|x-y|) (u(y)-u(x))\right\} \\
&\hspace{0.75in}= r_0\eta(r_0)|\nabla u(x)|s_k +\left(\tfrac{1}{2}\Delta_\infty u(x)+[v\cdot \nabla u(x)]_+ \right)r_0^2\eta(r_0)s_k^2\\
&\hspace{1.5in}+O\left((1+|\nabla u(x)|^{-1}\|u\|^2_{C^2(B(x,\eps))} + \|u\|_{C^3(B(x,\eps))})(\delta_n + \eps^2t+\eps^3)\right).
\end{align*}
We can apply the same argument as above to $-u$ to obtain 
\begin{align*}
\min_{y\in \X_n}&\left\{\eta_{\eps_k(x,y)}(|x-y|) (u(y)-u(x))\right\} \\
&\hspace{0.75in}= -r_0\eta(r_0)|\nabla u(x)|s_k +\left(\tfrac{1}{2}\Delta_\infty u(x) - [-v\cdot \nabla u(x)]_+ \right)r_0^2\eta(r_0)s_k^2\\
&\hspace{1.5in}+O\left((1+|\nabla u(x)|^{-1}\|u\|^2_{C^2(B(x,\eps))} + \|u\|_{C^3(B(x,\eps))})(\delta_n + \eps^2t+\eps^3)\right).
\end{align*}
Therefore
\begin{align*}
\L^{n,k}_{s,\infty}u(x)&=\rho(x)^{-2/d}\left(\Delta_\infty u(x)-\tfrac{1}{d}\nabla \log \rho(x)\cdot \nabla u(x)  \right)\\
&\hspace{1in}+O\left((1+|\nabla u(x)|^{-1}\|u\|^2_{C^2(B(x,\eps))} + \|u\|_{C^3(B(x,\eps))})(\delta_n + \eps^2t+\eps^3)\right).
\end{align*}
\end{proof}

\subsection{Discrete to continuum convergence}
\label{sec:continuum}

Many types of discrete to continuum convergence results have been proven recently for various learning problems on graphs, using tools like $\Gamma$-convergence \cite{trillos2019maximum,garcia2019variational,garciatrillos16}, the maximum principle and viscosity solutions \cite{calder2018game,calder2019consistency}, and even Martingale techniques \cite{calder2019rates}. The $\Gamma$-convergence results are variational in nature and do not apply to the game theoretic $p$-Laplacian. The viscosity solution approach in \cite{calder2018game} does not require a variational structure, but used uniform equicontinuity of the sequence of learned functions to establish compactness.

We present here a very general technique for proving discrete to continuum convergence results in a general setting that applies to all the symmetric and nonsymmetric operators discussed in Sections \ref{sec:epsgraph}, \ref{sec:nonknn} and \ref{sec:symknn}, and any others that satisfy certain monotonicity properties. The framework only requires the graph problem to have a maximum principle, and to be pointwise consistent with a well-posed equation in the continuum. It is not necessary to prove that the sequence of functions is equicontinuous, or that the solution of the continuum PDE is smooth. The framework is essentially an adaptation of the Barles-Souganidis framework \cite{barles1991convergence} to convergence of discrete problems on graphs.

Let $\Omega\subset \R^d$ be an open and bounded domain. For each $n\geq 1$, let $\X_n\subset \Omega$ be a collection of $n$ points in $\Omega$, and let $L^2(\X_n)$ denote the space of functions $u:\X_n\to \R$. Let 
\[F_n:L^2(\X_n)\times \R\times \X_n \to \R\]
denote our graph operator, generalizing a graph Laplacian, and suppose $u_n\in L^2(\X_n)$ is a solution of the boundary value problem
\begin{equation}\label{eq:Fnbvp}
\left\{\begin{aligned}
F_n(u_n,u_n(x),x)  &= 0,&&\text{if }x\in\X_n\cap \Omega_{\eps_n}\\ 
u_n(x) &=g(x),&&\text{if }x \in \X_n\cap \partial_{\eps_n}\Omega,
\end{aligned}\right.
\end{equation}
where $g:\Omega\to \R$ is a given continuous function, and $\eps_n>0$ represents the length scale on which the graph is connected. For convenience, we recall that $\partial_r \Omega= \{x\in \Omega \, : \, \text{dist}(x,\partial \Omega)\leq r\}$ and $\Omega_r = \Omega\setminus \partial_r \Omega$.  The graph equation \eqref{eq:Fnbvp} represents our generalization of a semi-supervised learning problem on a graph with labels $g$ on the set $\X_n\cap \partial_{\eps_n}\Omega$. This is just one model for labeled data, and others are possible (see \cite{calder2019rates} for other models). 

We now lay out simple and general conditions on $\X_n$, $u_n$ and $F_n$ that ensure $u_n$ has a well-posed continuum limit; that is, $u_n$ converges uniformly to the solution of a continuum PDE.   We say the points $\X_n$ are \emph{space filling} if
\begin{equation}\label{eq:Xn}
\lim_{n\to \infty} \sup_{x\in \Omega} \text{dist}(x,\X_n) = 0.
\end{equation}
We say $F_n$ is \emph{monotone} if for all $u,v\in L^2(\X_n)$, $t\in \R$ and $x\in \Omega$ 
\begin{equation}\label{eq:monotone}
 u\leq v \implies F_n(u,t,x) \geq F_n(v,t,x).
\end{equation}
We say $F_n$ is \emph{proper} if for all $u\in L^2(\X_n)$, $s,t\in \R$, and $x\in \Omega$
\begin{equation}\label{eq:proper}
 s \leq t \implies F_n(u,s,x) \leq F_n(u,t,x).
\end{equation}
We say the operators are \emph{consistent} as $n\to \infty$ with the differential operator\footnote{$\S(d)$ denotes the space of symmetric real-valued matrices of size $d\times d$.} 
\[F:\S(d)\times \R^d \times \R \times \Omega \to \R\]
if for every $\varphi\in C^\infty(\R^d)$ and every sequence of real numbers $\xi_n\to 0$ we have
\begin{equation}\label{eq:consistency}
\lim_{n\to \infty} \max_{x\in \X_n\cap \Omega_{\eps_n}} \left|F_n(\varphi + \xi_n,\varphi(x) + \xi_n,x) - F(\nabla^2 \varphi(x),\nabla \varphi(x),\varphi(x),x)\right|=0.
\end{equation}
We note that the connectivity length scale $\eps_n$ of the graph is encoded into the consistency statement, since we do not assume consistency near the boundary, where the ball $B(x,\eps_n)$ overlaps with $\partial\Omega$. 

\begin{remark}\label{rem:pc}
In the context of Sections \ref{sec:epsgraph}, \ref{sec:nonknn} and \ref{sec:symknn}, we would set $F_n(u,u(x),x) = -\L u(x)$, where $\L$ is any of the graph Laplacians in those sections. For example, if $\L$ is the unnormalized graph Laplacian \eqref{eq:graph2}, then 
\[F_n(u,t,x) = \sum_{y\in \X_n}w_{xy}(t - u(y)).\]
The first argument of $F_n$ encodes the dependence of $F_n$ on the neighboring values $u(y)$ for $y\in \X_n$ with $y\neq x$, and the second argument $t$ encodes the dependence on $t=u(x)$. Since all graph Laplacians are increasing functions of the difference $u(y)-t$,  they are both monotone and proper.  The consistency results established in Sections \ref{sec:epsgraph}, \ref{sec:nonknn} and \ref{sec:symknn} show that \eqref{eq:consistency} holds with probability one for any choice of $\L$ from those sections, provided $\eps=\eps_n \to 0$ sufficiently slowly. For example, in  Theorem \ref{thm:Ceps}, in order to make sure the results hold with high probability we require
\[n\eps^{3d/2}\gg \log(n) \ \ \text{ and }\ \ n\eps^{d+2}\gg \log (n).\]
This can be rewritten as lower bounds on the length scale $\eps$ as follows
\begin{equation}\label{eq:lowereps}
\eps\gg \left(\frac{\log(n)}{n}\right)^{2/(3d)} \ \ \text{ and }\ \ \eps\gg \left(\frac{\log (n)}{n}\right)^{1/(d+2)}.
\end{equation}
\label{rem:RGG}
\end{remark}

We now follow the Barles-Souganidis framework \cite{barles1991convergence} to prove that the sequence $u_n$ converges uniformly to the solution of the boundary value problem
\begin{equation}\label{eq:bvp}
\left\{\begin{aligned}
F(\nabla^2 u,\nabla u,u,x) &= 0,&&\text{in }\Omega\\ 
u &=g,&&\text{on }\partial \Omega,
\end{aligned}\right.
\end{equation}
provided the equation \eqref{eq:bvp} is well-posed in the viscosity sense with generalized Dirichlet condition $u=g$ on $\partial \Omega$. We review the definition of viscosity solution and the generalized Dirichlet problem in the appendix in Section \ref{sec:viscosity}.

\begin{theorem}\label{thm:D2C}
Assume $g$ is continuous, \eqref{eq:bvp} enjoys strong uniqueness, $F_n$ is monotone, proper, and consistent with $F$, $\X_n$ is space filling, and $\eps_n\to 0$ as $n\to \infty$. Let $u_n$ be a sequence of solutions of \eqref{eq:Fnbvp} that are uniformly bounded. Then
\begin{equation}\label{eq:unifom}
\lim_{n\to \infty} \max_{x\in \X_n}|u_n(x) - u(x)|=0,
\end{equation}
where $u\in C(\bar{\Omega})$ is the unique viscosity solution of \eqref{eq:bvp}.
\end{theorem}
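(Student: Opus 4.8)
The plan is to use the Barles--Souganidis method of half-relaxed limits, adapted to graphs. Since $\{u_n\}$ is uniformly bounded, the relaxed upper and lower limits
\[
\bar u(x) := \limsup_{\substack{n\to\infty\\ \X_n\ni y\to x}} u_n(y), \qquad \underline u(x):=\liminf_{\substack{n\to\infty\\ \X_n\ni y\to x}} u_n(y)
\]
are finite on $\bar\Omega$; by construction $\bar u$ is upper semicontinuous, $\underline u$ is lower semicontinuous, and $\underline u\le \bar u$ on $\bar\Omega$. The goal is to prove that $\bar u$ is a viscosity subsolution and $\underline u$ a viscosity supersolution of \eqref{eq:bvp} in the generalized (Dirichlet) sense. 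Strong uniqueness then forces $\bar u\le\underline u$, so $\bar u=\underline u=:u$; this common function is continuous on $\bar\Omega$, is the unique viscosity solution of \eqref{eq:bvp}, and the uniform convergence \eqref{eq:unifom} follows.

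For the subsolution property, let $\varphi\in C^\infty(\R^d)$ be such that $\bar u-\varphi$ has a local maximum at $x_0\in\bar\Omega$; after the usual reduction we may assume the maximum is strict over some $\overline{B(x_0,r)}\cap\bar\Omega$. Choosing $x_n\in\argmax_{\X_n\cap B(x_0,r)}(u_n-\varphi)$, the standard relaxed-limit argument (using strictness of the maximum and the uniform bound) shows that along a subsequence $x_n\to x_0$ and $u_n(x_n)\to\bar u(x_0)$. Set $\xi_n:=u_n(x_n)-\varphi(x_n)\to 0$, so that $u_n\le\varphi+\xi_n$ on $\X_n\cap B(x_0,r)$. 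Replacing $\varphi$ by a global smooth function $\tilde\varphi\ge\varphi$ that coincides with $\varphi$ near $x_0$ and is large enough outside $B(x_0,r)$ that $u_n\le\tilde\varphi+\xi_n$ everywhere (possible since $\sup_n\|u_n\|_\infty<\infty$), monotonicity of $F_n$ gives
\[
0 = F_n(u_n,u_n(x_n),x_n) \ge F_n(\tilde\varphi+\xi_n,\,\tilde\varphi(x_n)+\xi_n,\,x_n)
\]
for $n$ large (so that $x_n\in\X_n\cap\Omega_{\eps_n}$ and $\tilde\varphi(x_n)=\varphi(x_n)$). Letting $n\to\infty$ and using consistency \eqref{eq:consistency} together with $\tilde\varphi\equiv\varphi$ near $x_0$ yields $F(\nabla^2\varphi(x_0),\nabla\varphi(x_0),\varphi(x_0),x_0)\le 0$ whenever $x_0\in\Omega$. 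When $x_0\in\partial\Omega$, I split on whether infinitely many $x_n$ lie in $\partial_{\eps_n}\Omega$, where $u_n(x_n)=g(x_n)\to g(x_0)$ gives $\bar u(x_0)\le g(x_0)$, or infinitely many lie in $\Omega_{\eps_n}$, where the computation above gives the PDE branch of the generalized boundary condition; either way the generalized subsolution inequality holds at $x_0$. The supersolution property of $\underline u$ is proved symmetrically, exchanging maxima and minima and using $\tilde\varphi-\xi_n$ as a lower test function together with monotonicity and properness.

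Finally, strong uniqueness for \eqref{eq:bvp} with generalized Dirichlet data, applied to the subsolution $\bar u$ and the supersolution $\underline u$, gives $\bar u\le\underline u$, hence $\bar u=\underline u=:u\in C(\bar\Omega)$, the unique viscosity solution. If \eqref{eq:unifom} failed, there would be $\delta>0$, a subsequence, and $x_n\in\X_n$ with $|u_n(x_n)-u(x_n)|\ge\delta$; passing to a further subsequence with $x_n\to\bar x\in\bar\Omega$, the sandwich $\underline u(\bar x)\le\liminf_n u_n(x_n)\le\limsup_n u_n(x_n)\le\bar u(\bar x)$ together with continuity of $u$ forces $u_n(x_n)\to u(\bar x)$ and $u(x_n)\to u(\bar x)$, a contradiction. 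The main obstacle is the middle step: arranging the comparison so that the global monotonicity of $F_n$ can be combined with the consistency hypothesis evaluated at $x_n$ (this is where the uniform bound on $u_n$ and the connectivity length scale $\eps_n$ enter), and handling the boundary through the generalized Dirichlet condition rather than expecting $\bar u=\underline u=g$ pointwise on $\partial\Omega$, which may fail when $x_n$ approaches $\partial\Omega$ no faster than $\eps_n\to 0$. A secondary point is the discontinuity of $F$ at zero gradient for the $\infty$-Laplacian-type operators, which is absorbed by the viscosity framework as in Remark~\ref{rem:zerograd}.
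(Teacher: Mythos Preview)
Your proposal is correct and follows essentially the same Barles--Souganidis half-relaxed limits argument as the paper. The only notable difference is cosmetic: the paper handles the passage from a local to a strict global maximum in one step by setting $\psi(x)=\phi(x)+C|x-x_0|^4+\bar u(x_0)-\phi(x_0)$, which simultaneously normalizes the value at $x_0$ (so that $\xi_k\to 0$ holds without further comment) and penalizes away from $x_0$ without disturbing $\nabla\phi(x_0)$ or $\nabla^2\phi(x_0)$; you instead make the maximum strict locally and then patch $\varphi$ to a large $\tilde\varphi$ outside a ball. Both devices are standard and yield the same conclusion. One small point: your claim $\xi_n\to 0$ requires the normalization $\varphi(x_0)=\bar u(x_0)$, which you should fold into ``the usual reduction,'' or else absorb the nonzero limit into the test function as the paper implicitly does.
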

\begin{remark}
In Theorem \ref{thm:D2C}, by uniformly bounded we mean that there exists $C>0$ such that $\max_{x\in \X_n}|u_n(x)| \leq C$ for all $n\geq 1$. Bounds of this nature are generally proved using the discrete maximum principle on the graph, and often rely on graph-connectivity. For a simple example, consider 
\[F_n(u,u(x),x) = u_n(x) + \alpha_n\sum_{y\in \X_n\cap  B(x,\eps_n) }(u_n(x)-u_n(y)) - 1.\]
For an appropriate choice of normalization parameter $\alpha_n$, the equation $F_n=0$ would be consistent with the elliptic equation $u - \Delta u =1$ in the continuum limit. To establish the uniform bound on $u_n$, we use a maximum principle argument. Let $x\in \X_n$ be a point where $u_n$ attains its maximum value over $\X_n$. If $x\in \partial_{\eps_n}\Omega$, then $u_n(x)\leq g(x)$. If $x\in \Omega_{\eps_n}$, then we use that $u_n(x) - u_n(y) \geq 0$ for any  $y\in \X_n$ to obtain
\[0=F_n(u,u(x),x) = u_n(x) + \alpha_n\sum_{y\in \X_n\cap  B(x,\eps_n) }(u_n(x)-u_n(y)) - 1 \geq u_n(x) - 1.\]
Therefore $u_n(x) \leq 1$ and we obtain the bound 
\[\max_{x\in \X_n} u_n(x) \leq \max\{1,\|g\|_\infty\}=:C.\]
A bound on the minimum of $u_n$ is obtained similarly.
\end{remark}
\begin{proof}[Proof of Theorem \ref{thm:D2C}]
We define the weak upper and lower limits
\[\bar{u}(x) = \limsup_{\substack{n\to \infty\\ \Omega_n\ni y \to x }} u_n(y) \ \ \text{ and } \ \ \underline{u}(x) = \liminf_{\substack{n\to \infty\\ \Omega_n\ni y \to x }} u_n(y).\]
Due to \eqref{eq:Xn} and the assumption that $u_n$ are uniformly bounded, we have $-\infty < \underline{u}(x) \leq \bar{u}(x) < \infty$ for all $x\in \bar{\Omega}$. Furthermore, it is straightforward to check (see, e.g., \cite{calderViscosity}) that $\bar{u}\in \usc(\bar{\Omega})$ and $\underline{u}\in \lsc(\bar{\Omega})$, where $\usc(\bar{\Omega})$ and $\lsc(\bar{\Omega})$ denote the spaces of upper semicontinuous and lower semicontinuous functions on $\bar{\Omega}$, respectively. We claim that $\bar{u}$ is a viscosity subsolution of \eqref{eq:bvp} and $\underline{u}$ is a viscosity supersolution. Once we establish this, it follows from strong uniqueness that $\bar{u}\leq \underline{u}$. Therefore $\bar{u}=\underline{u}$ and \eqref{eq:unifom} immediately follows.

We will show that $\bar{u}$ is a viscosity subsolution; the proof that $\underline{u}$ is a supersolution is similar. Let $x_0\in \bar{\Omega}$ and $\phi\in C^\infty(\R^d)$ such $\bar{u}-\phi$ has a local maximum at $x_0$ with respect to $\bar{\Omega}$. Define
\[\psi(x) = \phi(x) + C|x-x_0|^4 + \bar{u}(x_0) - \phi(x_0).\]
Then $\psi(x_0) = \bar{u}(x_0)$, and we can choose $C>0$ large enough so that $\bar{u}-\psi$ has a strict global maximum at $x_0$ relative to $\bar{\Omega}$. It follows that there exists $n_k\to \infty$, $\X_{n_k}\ni x_{n_k}\to x_0$ with $u_{n_k}(x_{n_k}) \to \bar{u}(x_0)$ such that $u_{n_k} - \psi$ attains its maximum value over $\X_{n_k}$ at the point $x_{n_k}$ for each $k$. Set $\xi_k = u_{n_k}(x_{n_k}) - \psi(x_{n_k})$ so that $u_{n_k}\leq \psi + \xi_k$.  Then since $F_n$ is monotone (i.e., \eqref{eq:monotone} holds) we have
\begin{equation}\label{eq:Fnz}
F_{n_k}(\psi + \xi_k,\psi(x_{n_k}) + \xi_k,x_{n_k}) \leq F_{n_k}(u_{n_k},u_{n_k}(x_{n_k}),x_{n_k})  = 0.
\end{equation}

We now have two cases.  

1. If $x_0 \in \Omega$ then $x_{n_k}\in \X_{n_k}\cap \Omega_{\eps_{n_k}}$ for $k$ sufficiently large, and since $\xi_k\to 0$ as $k\to \infty$ we can combine consistency (i.e., \eqref{eq:consistency}) with \eqref{eq:Fnz} to find that 
\[F(\nabla ^2 \psi(x_0),\nabla \psi(x_0),\psi(x_0),x_0) \leq 0.\]
Since $\nabla ^2 \psi(x_0) = \nabla ^2 \phi(x_0)$, $\nabla \psi(x_0)=\nabla \phi(x_0)$ and $\psi(x_0)=\bar{u}(x_0)$ we have, as desired, the viscosity subsolution condition
\begin{equation}\label{eq:subs}
F(\nabla ^2 \phi(x_0),\nabla \phi(x_0),\bar{u}(x_0),x_0) \leq 0.
\end{equation}

2. If $x_0\in \partial \Omega$, then we can pass to a further subsequence, if necessary, so that either $x_{n_k}\in \X_{n_k}\cap \Omega_{\eps_{n_k}}$ or $x_{n_k}\in \X_{n_k}\cap \partial_{\eps_{n_k}}\Omega$ for all $k$. In the former case we again find that \eqref{eq:subs} holds. In the latter case, we have $u_{n_k}(x_{n_k})= g(x_{n_k})$ for all $k$, and since $g$ is continuous we have that $\bar{u}(x_0)=g(x_0)$. Thus, when $x\in \partial \Omega$ we have
\[\min\left\{ F(\nabla ^2\phi(x_0),\nabla \phi(x_0),\bar{u}(x_0),x_0), \bar{u}(x_0)-g(x_0)\right\}\leq 0, \]
which is the viscosity subsolution condition on the boundary. This completes the proof.
\end{proof}
\begin{remark}
In the context of semi-supervised learning, Theorem \ref{thm:D2C} shows that the discrete graph problems are well-posed in the continuum with $O(n\eps_n)$ labeled data points, which is the number of points in $\X_n\cap \partial_{\eps_n}\Omega$, provided $\X_n$ are roughly evenly spread (e.g., an \emph{i.i.d.}~sequence with Lebesgue density). This is a labeling rate of $O(\eps_n)$, which vanishes as $n\to \infty$. However, it may vanish very slowly, since pointwise consistency requires lower bounds on $\eps_n$, as explained in Remark \ref{rem:pc} (see Eq.~\eqref{eq:lowereps}). We emphasize that this is a general result that is \emph{independent} of the structure of the learning algorithm or of the continuum equation. For the game-theoretic $p$-Laplacian with $p>d$, it was shown in \cite{calder2018game} that the $\eps$-ball game-theoretic $p$-Laplacian is well-posed in the continuum with $O(1)$ labels, which is a labeling rate of $O(1/n)$. This is far smaller than $O(\eps_n)$ due to \eqref{eq:lowereps}. A similar result was proved for the variational $p$-Laplacian in \cite{slepcev2019analysis}, though here there is a restriction on $\eps_n$ for well-posedness, even when $p>d$. In \cite{calder2019rates}, it is shown that when $p=2$, Laplacian regularization is well-posed at the lower label rate of $O(\eps_n^2)$ using random walk techniques. For the $p$-Laplacian with $2\leq p \leq d$, it is an open problem to determine the lowest labeling rate at which the algorithm has a well-posed continuum limit. One would conjecture the lowest rate is $O(\eps_n^p)$.
\label{rem:labelingrate}
\end{remark}

\section{Algorithms for p-Laplacian learning}
\label{sec:alg}

We now present algorithms for $p$-Laplacian learning with both the game-theoretic and variational $p$-Laplacians. Section \ref{sec:newton} reviews how to apply Newton's method to the variational $p$-Laplace equation, and discuss how to apply homotopy on $p$ to accelerate convergence. Section \ref{sec:game_theoretic} presents three algorithms for solving the game-theoretic $p$-Laplacian on a graph: a gradient descent approach, a Newton-like method, and a semi-implicit algorithm.

\subsection{Newton's Method for variational p-Laplacian\label{sec:newton}}

Since $J_p$ is smooth and convex, it is natural to use Newton's method to minimize $J_p$. We give here the explicit details of the Newton iteration for minimizing $J_p$. It is useful to first rewrite the function $J_p(u)$ using vector notation. Let $\mX = \{ x^1, \dots, x^{n+m} \}\subset \R^d$, where $\mO = \{  { x}^{n+1}, \dots, { x}^{n+m} \}$ is the observation set. We define $u_i = u(x^i)$ and set ${\bf u}= (u_1, \dots, u_{n}) \in \R^{n}$. Similarly, set $w_{ij} = w_{x_i x_j}$, 
$f_i = f(x^i)$, 
$g_i=g(x^{i+n})$, 
${\bf f} = (f_1, \dots, f_{n}) \in \R^{n}$, 
and ${\bf g} = (g_1, \dots, g_m) \in \R^m$. Then, subject to the constraints in \eqref{eq:plap_optimality}, we can write
\begin{equation} J_p({\bf u}) = \frac{1}{p} \Bigg(\sum^{n}_{i=1} \sum^n_{j=i+1} w_{ij} |u_i - u_j|^p + \sum^{n}_{i=1} \sum^{m}_{j=1} w_{i,j+n} |u_i - g_j|^p \Bigg) + \sum^{n}_{i=1} f_i u_i
.\label{eq:Hp_definition}\end{equation}
%We wish to minimize (\ref{eq:Hp_definition}) with respect to $u_1, \dots, u_n$. Minimizers satisfy \eqref{eq:plap_graph}.

Newton's method corresponds to the iteration
\begin{equation} {\bf u}^{k+1} = {\bf u}^k - \Big[\nabla^2 J_p({\bf u}^k)\Big]^{-1} \nabla J_p({\bf u}^k). \label{eq:newton_iteration}\end{equation}
For notational convenience, define $a_{ij}({\bf u})=w_{ij}|u_i-u_j|^{p-2}$ and $b_{ij}({\bf u})=w_{i,j+n}|u_i-g_j|^{p-2}$, and 
%Let us write  $A({\bf u}) = \big(a_{ij} ({\bf u}) \big)_{ij} \in \R^{n \times n}$ and $B({\bf u}) = \big(b_{ij} ({\bf u}) \big)_{ij} \in \R^{n \times m}$, where  
\begin{equation}  
d_i({\bf u}) = \sum^n_{j=1} a_{ij} ({\bf u}) + \sum^m_{j=1} b_{ij} ({\bf u}).
\end{equation}
Then, we write $A({\bf u}) = \big(a_{ij} ({\bf u}) \big)_{ij} \in \R^{n \times n}, B({\bf u}) = \big(b_{ij} ({\bf u}) \big)_{ij} \in \R^{n \times m}$ and $D({\bf u}) = \text{diag}(d_i({\bf u}))\in \R^{n\times n}$. In this notation, we compute 
\[ \nabla J_p({\bf u}) = L({\bf u}){\bf u} - B({\bf u}) {\bf g}  + {\bf f} 
\hsp\hsp\mbox{ and } \hsp\hsp \nabla^2 J_p({\bf u}) = (p - 1) L({\bf u}), \]
where $L( {\bf u}) := D( {\bf u}) - A({\bf u})$,  and thus the Newton update is given by
\begin{equation} {\bf u}^{k+1} = \frac{p-2}{p-1} {\bf u}^k + \frac{1}{p-1} L({\bf u}^k)^{-1}  
\Big[ B({\bf u}^k) {\bf g} - {\bf f} \Big]
%B({\bf u}^k) {\bf g}
. \label{eq:newton_iteration_matrix} \end{equation}
The inversion of $L({\bf u}^k)$ is performed with an iterative method, such as the preconditioned conjugate gradient. The matrix $L({\bf u}^k)$, being a graph-Laplacian, is always positive semi-definite. If the graph is connected and ${\bf u}$ is nondegenerate, then it is also non-singular.  
\begin{proposition}\label{prop:L}
If the graph on $n$ nodes with weights $(a_{ij}({\bf u}))_{i,j=1}^n$ is connected, and $b_{ij}({\bf u})>0$ for some $i,j$, then $L({\bf u})$ is positive definite. 
\end{proposition}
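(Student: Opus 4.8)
The plan is to show that $L(\mathbf{u})$ is symmetric positive semi-definite with trivial kernel under the stated hypotheses. First I would write down the quadratic form: for any $\mathbf{v}\in\R^n$,
\[
\mathbf{v}^T L(\mathbf{u})\mathbf{v} = \sum_{i=1}^n d_i(\mathbf{u}) v_i^2 - \sum_{i,j=1}^n a_{ij}(\mathbf{u}) v_i v_j.
\]
Since $d_i(\mathbf{u}) = \sum_j a_{ij}(\mathbf{u}) + \sum_j b_{ij}(\mathbf{u})$, I would split the diagonal term accordingly and recognize the standard graph-Laplacian identity for the $a$-part, obtaining
\[
\mathbf{v}^T L(\mathbf{u})\mathbf{v} = \frac12\sum_{i,j=1}^n a_{ij}(\mathbf{u})(v_i-v_j)^2 + \sum_{i=1}^n\Big(\sum_{j=1}^m b_{ij}(\mathbf{u})\Big) v_i^2.
\]
Both terms are manifestly nonnegative since $a_{ij}(\mathbf{u}) = w_{ij}|u_i-u_j|^{p-2}\geq 0$ and $b_{ij}(\mathbf{u})\geq 0$, so $L(\mathbf{u})$ is positive semi-definite.

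Next I would argue the form vanishes only at $\mathbf{v}=0$. Suppose $\mathbf{v}^T L(\mathbf{u})\mathbf{v}=0$. Then every term vanishes: $a_{ij}(\mathbf{u})(v_i-v_j)^2=0$ for all $i,j$, and $\big(\sum_j b_{ij}(\mathbf{u})\big)v_i^2=0$ for all $i$. The first set of conditions says $v_i=v_j$ whenever $a_{ij}(\mathbf{u})>0$, i.e., whenever $i,j$ are joined by an edge in the graph with weights $(a_{ij}(\mathbf{u}))$; since this graph is connected by hypothesis, $\mathbf{v}$ is constant, say $v_i\equiv c$. The second set of conditions, using that $b_{i_0 j_0}(\mathbf{u})>0$ for some pair $(i_0,j_0)$, forces $v_{i_0}^2 = 0$, hence $c=0$ and $\mathbf{v}=0$. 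Therefore $L(\mathbf{u})$ is positive definite.

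There is essentially no obstacle here beyond bookkeeping: the only subtlety is ensuring the "connected" hypothesis is applied to the correct graph, namely the one weighted by $a_{ij}(\mathbf{u}) = w_{ij}|u_i-u_j|^{p-2}$ rather than by $w_{ij}$ itself. For $p>2$ these two graphs have the same edge set precisely when $u_i\neq u_j$ on every original edge (the "nondegenerate" condition alluded to before the proposition); the statement as phrased sidesteps this by assuming connectivity of the $a$-weighted graph directly, so I would simply quote that hypothesis. (For $p=2$ the two graphs coincide and the distinction is moot.) The argument is otherwise identical to the proof that a graph Laplacian with a positive boundary penalty is invertible.
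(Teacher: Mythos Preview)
Your proof is correct and cleaner than the paper's, but it follows a different route. The paper argues via a discrete maximum principle: assuming $L(\mathbf{u})\mathbf{x}=0$, it looks at a node $i$ where $\mathbf{x}$ attains its maximum, uses \eqref{eq:graphL} to show the maximum propagates along $a$-edges, and then walks along a path in the connected $a$-graph to a node $k$ with $\sum_j b_{kj}(\mathbf{u})>0$, where the equation forces $x_k\leq 0$; applying the same to $-\mathbf{x}$ gives $\mathbf{x}=0$. Your approach instead writes the quadratic form explicitly as $\tfrac12\sum_{i,j}a_{ij}(v_i-v_j)^2 + \sum_i(\sum_j b_{ij})v_i^2$ and reads off positive definiteness directly. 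Your argument is shorter and more self-contained; the paper's maximum-principle argument is more in keeping with the comparison techniques used elsewhere in the paper and would generalize to non-symmetric operators where no quadratic form is available. Your closing remark about which graph the connectivity hypothesis refers to is apt and matches the paper's Remark after the proposition.
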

\begin{proof}
Since $L({\bf u})$ is positive semi-definite, we simply have to prove that $L({\bf u})$ is non-singular. The proof follows a maximum principle argument. If $L({\bf u})\x = 0$, $\x=(x_1,\dots,x_n)$, then we have
\begin{equation}\label{eq:graphL}
\sum_{j=1}^n a_{ij}({\bf u})(x_i - x_j) + x_i\sum_{j=1}^m b_{ij}({\bf u}) = 0
\end{equation}
for all $i=1,\dots,n$. 

Let $i$ be an index for which $x_i \geq x_j$ for all $j$; that is, the node where $\x$ attains its maximum over the graph. We have two cases now.

Case 1. If $\sum_{j=1}^m b_{ij}({\bf u})=0$, then it follows from \eqref{eq:graphL}, and the fact that $x_i-x_j\geq 0$ for all $j$, that
\[ a_{ij}({\bf u})(x_i - x_j) =0  \ \ \text{for all }j=1,\dots,n.\]
 Thus, at any neighbor in the graph where $a_{ij}({\bf  u})>0$ we have $x_i=x_j$.

Case 2. If $\sum_{j=1}^m b_{ij}({\bf u})>0$, then it follows from \eqref{eq:graphL} that 
\[x_i\sum_{j=1}^m b_{ij}({\bf u}) \leq 0,\]
and so $x_i\leq 0$.

Note the observations above hold for any node $i$ where $\x$ attains its maximum value. We claim that these observations imply that $\max_{1\leq j \leq n}x_j \leq 0$. To see this, choose an index $i$ for which $\x$ attains it maximum value, and let $k\in \{1,\dots,n\}$ be a node for which
\begin{equation}\label{eq:cond}
\sum_{j=1}^m b_{kj}({\bf u})>0,
\end{equation}
which is guaranteed to exist by assumption. Since the graph with weights $(a_{ij}({\bf u}))_{i,j=1}^n$ is assumed connected, we can construct a path $i=i_0,i_1,\dots,i_q=k$ between node $i$ and $k$ for which $a_{i_\ell,i_{\ell+1}}({\bf u})>0$ for $\ell=0,\dots,q-1$. We can assume that 
\[\sum_{j=1}^m b_{i_{\ell},j}({\bf u})=0 \ \ \text{ for }\ell=0,\dots,q-1\]
or else we can redefine $k$ as the earliest node along the path for which \eqref{eq:cond} holds. Thus, we can apply Case 1 above along the path to show that $x_{i_{\ell}}=x_{i_{\ell+1}}$ for $\ell=0,\dots,q-1$. Thus, $x_i=x_k$. Then we apply Case 2 above, since \eqref{eq:cond} holds, to show that $x_i=x_k\leq 0$, which proves the claim.

We have proved that if $L({\bf u})\x = 0$ then $x_i\leq 0$ for all $i$. Since the equation is linear, we also have $L({\bf u})(-\x) = 0$ and so $-x_i\leq 0$ for all $i$. Thus, $L({\bf u})\x = 0$ implies $\x\equiv 0$, and so $L({\bf u})$ is nonsingular.
\end{proof}
\begin{remark}
The conditions in Proposition \ref{prop:L} hold when the original graph $G=(\X,\mW)$ is connected, and $u_i\neq u_j$ for all $i\neq j$, and $u_i\neq g_j$ for some $i,j$ with $w_{i,j+n}>0$. In other words, if all the values $\{u_1,\dots,u_n,g_1,\dots,g_m\}$ are unique then $L({\bf u})$. We can get into trouble when ${\bf u}$ is constant, or locally constant, since the conditions in the Proposition \ref{prop:L} fail to hold.  In practice we find that $L({\bf u})$ remains non-singular throughout the Newton iterations.
\label{lab:Lnewton}
\end{remark}

\begin{remark}
By the Newton-Kantorovich Theorem~\cite{ortega1968newton}, Newton's method is guaranteed to converge provided the initial guess is sufficiently close to the true solution. Since $J_p$ is convex, but not strongly convex for $p>2$, convergence may not be quadratic. In fact, according to \cite{flores2018algorithms}, the number of iterations required for Newton's method to converge may scale at least linearly in some cases. 
\end{remark}

\begin{remark}[Homotopy on $p$]
Let us mention that one can significantly speed up Newton's method with a good starting point ${\bf u^0}$, since most of the computational cost of Newton's method comes from approaching the solution, and once we reach the quadratic convergence region, only a few steps are required for convergence. As a result, we can solve \eqref{eq:plap_optimality} efficiently with a homotopy method in $p$. That is, we compute the solution for an increasing sequence of values of $p$, starting at $p=2$, and initializing Newton's method each time from the solution from the previous value of $p$. If the steps in $p$ are small enough, this initialization falls in the quadratic convergence region and only a handful of Newton iterations are required for each increment in $p$.  This approach is known in the literature as homotopy, and has been applied to a variety of problems in order to improve performance on related problems. For example, see \cite{4785969, fletcher1971calculation}. We illustrate the effectiveness of Newton's method with homotopy in Section \ref{sec:homotopy_numerics}.
\label{rem:homotopy}
\end{remark}

\subsection{Algorithms for the game theoretic formulation \label{sec:game_theoretic}}

We now consider algorithms for solving the game theoretic problem \eqref{eq:plap_graph_game}. We shall discuss a gradient descent-type method in Section \ref{sec:game_grad}, a Newton-like algorithm in Section \ref{sec:newtonlike}, and a semi-implicit method in Section \ref{sec:semi}. 

\subsubsection{Gradient Descent Approach\label{sec:game_grad}}
We first consider a gradient descent-type approach to solving \eqref{eq:plap_graph_game}, which is based on iterating 
\begin{equation}  \label{eq:game_grad_iteration}
u^{k+1}(x) = 
\begin{cases}
u^k(x) + \alpha \big( \mathscr{L}^G_p u^k(x) + f(x) \big),&\text{if }x\in \X\setminus \mO,\\
g(x),&\text{if }x\in \mO,
\end{cases}
\end{equation}
where $\alpha>0$ is the time step. We call this a gradient descent-type iteration, where $\mathscr{L}^G_p u^k(x)+f(x)$ plays the role of a gradient, but it is important to point out it is not gradient descent, since the game theoretic $p$-Laplacian does not arise from a variational principle. The most straightforward stopping condition is to fix $\eps>0$ and iterate until
\[|\mathscr{L}^G_p u^k(x) + f(x) |\leq \eps\]
for all $x\in \X\setminus \mO$.  However, this stopping condition does not guarantee that $u^k$ is within $\eps$ of the solution $u$ of the $p$-Laplacian learning problem \eqref{eq:plap_graph_game}, since the stability of the operator $\mathscr{L}^G_p $ depends in complicated ways on the graph and choice of boundary nodes $\mO$. We present here a modification of the gradient descent method that uses the comparison principle to inform the stopping condition.

\begin{lemma}[Comparison principle]\label{lem:comparison}
Assume $w_{xy}\leq 1$ for all $x,y\in \X$. Let $u^k,v^k:\X\to \R$ satisfy 
\begin{equation}\label{eq:subGD}
u^{k+1}(x) \leq u^k(x) +\alpha \big( \mathscr{L}^G_p u^k(x) + f(x) \big) 
\end{equation}
and
\begin{equation}\label{eq:superGD}
v^{k+1}(x) \geq v^k(x) +\alpha \big( \mathscr{L}^G_p v^k(x) + f(x) \big) 
\end{equation}
for all $x\in \X\setminus \mO$ and $0 \leq k \leq T-1$, where $T\in \N$, and assume that $u^0 \leq v^0$ and $u^k(x)\leq v^k(x)$ for all $x\in \mO$ and $1\leq k \leq T$. If $\alpha \leq p / (2p - 3)$ then $u^k\leq v^k$ on $\X$ for all $0 \leq k \leq T$.
\end{lemma}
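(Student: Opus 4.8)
**The plan is to prove the comparison principle by a discrete induction on $k$ combined with a maximum-principle argument at each step.** We want to show that if $u^0 \le v^0$ and $u^k \le v^k$ on $\mO$ for all $k$, then $u^k \le v^k$ everywhere, provided $\alpha \le p/(2p-3)$. The natural strategy is induction on $k$: assuming $u^k \le v^k$ on all of $\X$, we use the update inequalities \eqref{eq:subGD}, \eqref{eq:superGD} together with monotonicity of the operator $\mathscr{L}^G_p$ in the neighboring values to propagate the inequality to step $k+1$. On $\mO$ this is given by hypothesis, so the content is at the interior nodes $x \in \X \setminus \mO$.

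\textbf{The key computation is a one-step estimate.} Set $w^k = u^k - v^k$, so $w^k \le 0$ on $\X$ by the inductive hypothesis, and $w^k \le 0$ on $\mO$ for all $k$. Subtracting \eqref{eq:superGD} from \eqref{eq:subGD} gives, for $x \in \X \setminus \mO$,
\[
w^{k+1}(x) \le w^k(x) + \alpha\big( \mathscr{L}^G_p u^k(x) - \mathscr{L}^G_p v^k(x) \big).
\]
Now I would expand $\mathscr{L}^G_p = \frac{1}{d_x p}\Delta^G_2 + \lambda(1-\tfrac 2p)\Delta^G_\infty$ and bound the difference $\mathscr{L}^G_p u^k(x) - \mathscr{L}^G_p v^k(x)$. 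The point is that each piece ($\Delta^G_2$ and $\Delta^G_\infty$) is monotone in the values at neighbors $y \ne x$ and has a controlled, nonpositive coefficient on the diagonal value at $x$. Concretely, writing the $2$-Laplacian term, $\frac{1}{d_x p}\sum_y w_{xy}(u^k(y) - u^k(x))$, the difference is $\frac{1}{d_x p}\sum_y w_{xy}(w^k(y) - w^k(x))$; since $\sum_y w_{xy} = d_x$ this contributes $\frac{1}{p} \overline{w^k}_x - \frac{1}{p} w^k(x)$ where $\overline{w^k}_x$ is a weighted average of $w^k$ at neighbors. For the $\infty$-Laplacian term one uses that $\min_y$ and $\max_y$ are monotone and $1$-Lipschitz, so $\Delta^G_\infty u^k(x) - \Delta^G_\infty v^k(x) \le (w^k(y_1) - w^k(x)) + (w^k(y_2) - w^k(x))$ for suitable neighbors $y_1, y_2$ (attaining the relevant extrema for $u^k$), using $w_{xy}\le 1$ to absorb the weights. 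Collecting terms, one arrives at an inequality of the schematic form
\[
w^{k+1}(x) \le (1 - \alpha c)\, w^k(x) + \alpha c\, (\text{weighted combination of } w^k \text{ at neighbors}),
\]
where $c = \frac{1}{p} + 2\lambda(1 - \tfrac 2p)$ — this is where the bound $\alpha \le p/(2p-3)$ enters (taking $\lambda = 1$, $c = \frac{2p-3}{p}$, so $1 - \alpha c \ge 0$). The coefficient $1 - \alpha c$ is nonnegative exactly under the stated step-size restriction, and the neighbor-combination coefficients are nonnegative and sum to $\le \alpha c$, so the right-hand side is a (sub-)convex combination of values of $w^k$, all of which are $\le 0$ by the inductive hypothesis. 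Hence $w^{k+1}(x) \le 0$.

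\textbf{The main obstacle} will be the careful bookkeeping of the $\infty$-Laplacian term: because the arg-min/arg-max neighbors for $u^k$ and $v^k$ differ, one must choose the comparison neighbors correctly (use $u^k$'s minimizer to bound $\min_y w_{xy}(u^k(y)-u^k(x))$ from above against $v^k$, and symmetrically for the max) to get an inequality in the right direction, and then verify that after multiplying by $\lambda(1-\tfrac 2p)$ and adding the $2$-Laplacian contribution, the total diagonal coefficient is precisely $-\alpha c$ with $c$ as above, so that $1 - \alpha c \ge 0$ is equivalent to $\alpha \le 1/c = p/(2p-3)$ (for $\lambda=1$). Once the one-step estimate $w^{k+1} \le 0$ on $\X \setminus \mO$ is established, combined with $w^{k+1} \le 0$ on $\mO$ by hypothesis, the induction closes and $u^k \le v^k$ on $\X$ for all $0 \le k \le T$, as claimed.
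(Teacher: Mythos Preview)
Your approach is essentially identical to the paper's: prove the one-step monotonicity $u\le v \Rightarrow u+\alpha\mathscr{L}^G_p u \le v+\alpha\mathscr{L}^G_p v$ by expanding into the $\Delta^G_2$ and $\Delta^G_\infty$ pieces, choosing the comparison neighbors so that the $\min/\max$ inequalities go the right way, and checking the diagonal coefficient $1-\tfrac{\alpha}{p}-\alpha(1-\tfrac{2}{p})(w_{xy_1}+w_{xy_2})\ge 0$ under $w_{xy}\le 1$ and $\alpha\le p/(2p-3)$, then conclude by induction. One small slip: for the $\min$ term you should use $v^k$'s argmin (and for the $\max$ term $u^k$'s argmax), not both from $u^k$; with that correction your bookkeeping goes through exactly as you describe.
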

\begin{proof}
Fix $u,v:\X\to \R$ and assume $u(x)\leq v(x)$ for all $x\in \X$. Fix $x\in \X$ and let $y_1,y_2\in \X$ such that 
\[\min_{y\in \X}w_{xy}(v(y)-v(x)) = w_{xy_1}(v(y_1)-v(x))\]
and
\[\max_{y\in \X}w_{xy}(u(y)-u(x)) = w_{xy_2}(u(y_2)-u(x)).\]
Since $w_{xy}\leq 1$ and $\alpha \leq p / (2p - 3)$ we have that
\[1-\frac{\alpha}{p} -\frac{\alpha}{p} (p-2)(w_{xy_1}+w_{xy_2}) \geq 0.\]
We can now compute
\begin{align*}
u(x) + \alpha\L^G_p u(x) &=u(x) +  \frac{\alpha}{d_x p} ~ \Delta^G_2 u(x) + \alpha\left(1-\tfrac{2}{p}\right) \Delta^G_\infty u(x) \\
&=u(x) + \frac{\alpha}{d_x p}\sum_{y\in \mX}w_{xy}(u(y)-u(x)) \\
&\hspace{1in}+ \alpha(1-\tfrac{2}{p}) \left\{\min_{y\in \mX}w_{xy}(u(y)-u(x)) + \max_{y\in \mX}w_{xy}(u(y)-u(x))\right\}\\
&\leq\left( 1-\frac{\alpha}{p} -\frac{\alpha}{p} (p-2)(w_{xy_1}+w_{xy_2}) \right)u(x) + \frac{\alpha}{p}\frac{1}{d_x }\sum_{y\in \mX}w_{xy}u(y)\\
&\hspace{2.5in}+ \frac{\alpha}{p}(p-2)(w_{xy_1}u(y_1)+ w_{xy_2}u(y_2))\\
&\leq\left( 1-\frac{\alpha}{p} -\frac{\alpha}{p} (p-2)(w_{xy_1}+w_{xy_2}) \right)v(x) + \frac{\alpha}{p}\frac{1}{d_x }\sum_{y\in \mX}w_{xy}v(y)\\
&\hspace{2.5in}+ \frac{\alpha}{p}(p-2)(w_{xy_1}v(y_1)+ w_{xy_2}v(y_2))\\
&\leq v(x) + \frac{\alpha}{d_x p}\sum_{y\in \mX}w_{xy}(v(y)-v(x)) \\
&\hspace{1in}+ \alpha(1-\tfrac{2}{p}) \left\{\min_{y\in \mX}w_{xy}(v(y)-v(x)) + \max_{y\in \mX}w_{xy}(v(y)-v(x))\right\}\\
&\leq v(x) + \alpha\L^G_p v(x).
\end{align*}
Thus, we have shown that when $\alpha \leq p / (2p - 3)$ we have
\[u\leq v \implies u + \alpha\L^G_p u \leq v + \alpha\L^G_p v.\]
The proof is completed by using \eqref{eq:subGD} and \eqref{eq:superGD} to write
\[u^{k+1}(x) - v^{k+1}(x)\leq u^k(x) + \alpha\L^G_p u^k(x) - (v^k(x) + \alpha\L^G_p v^k(x)),\]
for $x\in \X\setminus \mO$ and using induction.
\end{proof}
   
We present our method in the case that $f(x)=0$. We define 
\begin{equation}\label{eq:uzero}
\bar{u}^0(x) = 
\begin{cases}
\max_{x\in \mO}g(x),&\text{if }x\in \X\setminus \mO,\\
g(x),&\text{if }x\in \mO,
\end{cases}
\end{equation}
and
\begin{equation}\label{eq:uzero2}
\underline{u}^0(x) = 
\begin{cases}
\min_{x\in \mO}g(x),&\text{if }x\in \X\setminus \mO,\\
g(x),&\text{if }x\in \mO,
\end{cases}
\end{equation}
and define $\bar{u}^k(x)$ and $\underline{u}_k(x)$ for $k\geq 1$ by
\begin{equation}\label{eq:iter}
\bar{u}^{k+1}(x) = \bar{u}^k(x) + \alpha \L^G_p \bar{u}^k(x), \ \ \text{ and } \ \  \underline{u}^{k+1}(x) = \underline{u}^k(x) + \alpha \L^G_p \underline{u}^k(x),
\end{equation}
for $x\in \X\setminus \mO$, and $\bar{u}^{k+1}(x) = \underline{u}^{k+1}(x) = g(x)$ for $x\in \mO$. 
\begin{remark}
The extension of this method to $f\neq 0$ is not immediately obvious. It is important for the convergence analysis in Theorem \ref{thm:GDconv} below that $\bar{u}^0$ and $\underline{u}^0$ are super- and subsolutions of \eqref{eq:plap_graph_game}, respectively. When $f$ is nonzero, it is not clear how to construct such super- and subsolutions to initialize the method. We leave the extension of the method to nonzero $f$ to future work. We recall that in semi-supervised learning we always take $f=0$, so this extension is not needed for machine learning applications (though it may be of interest in numerical analysis).
\label{rem:fnonzero}
\end{remark}

   We prove below that this iteration scheme converges to the solution $u_*:\X\to \R$ of  \eqref{eq:plap_graph_game} with $f=0$. 
%\begin{equation}\label{eq:plapzero}
%\left\{\begin{aligned}
%\L^G_p u_*(x) &= 0&&\text{if }x\in \X\setminus \mO\\ 
%u_*(x) &=g(x)&&\text{if }x\in \mO.
%\end{aligned}\right.
%\end{equation}
We remark that the solution $u_*$ of \eqref{eq:plap_graph_game} is unique when the graph is connected \cite{calder2018game,calder2019consistency}. 
\begin{theorem}[Convergence]\label{thm:GDconv}
Assume $w_{xy}\leq 1$ for all $x,y\in \X$, and assume the graph is connected. Let $u_*$ be the solution of \eqref{eq:plap_graph_game} with $f=0$, and define $u^k(x) = \tfrac{1}{2}(\bar{u}^k(x) + \underline{u}_k(x))$. If $\alpha \leq p / (2p - 3)$ then for all $k\geq 1$ and all $x\in \X$ we have
\begin{equation}\label{eq:GDconv}
|u^k(x) - u_*(x)| \leq \frac{1}{2}|\bar{u}^k(x) - \underline{u}_k(x)|
\end{equation}
and
\begin{equation}\label{eq:conv}
\lim_{k\to \infty}\bar{u}^k(x) = \lim_{k\to \infty}\underline{u}^k(x) = u_*(x).
\end{equation}
\end{theorem}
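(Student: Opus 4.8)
The plan is to show that $u_*$ is trapped between the two sequences, i.e. $\underline u^k\le u_*\le\bar u^k$ for every $k$, that each of $\bar u^k$ and $\underline u^k$ is monotone in $k$ (hence convergent), and that their common limit must be $u_*$ by uniqueness; the bound \eqref{eq:GDconv} then drops out immediately. Throughout, the comparison principle of Lemma \ref{lem:comparison} is essentially the only tool, and this is precisely where the hypothesis $\alpha\le p/(2p-3)$ gets used. The one preliminary remark is that $u_*$ is a stationary point of the iteration \eqref{eq:iter}: since $\L^G_p u_*=-f=0$ on $\X\setminus\mO$ and $u_*=g$ on $\mO$, the constant-in-$k$ sequence $u^k\equiv u_*$ satisfies both \eqref{eq:subGD} and \eqref{eq:superGD} with equality.

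\textbf{Monotonicity.} First I would verify that $\bar u^0$ defined in \eqref{eq:uzero} is a supersolution of the graph equation: for $x\in\X\setminus\mO$ every difference $\bar u^0(y)-\bar u^0(x)$ is $\le0$ (it vanishes if $y\notin\mO$ and equals $g(y)-\max_\mO g\le0$ if $y\in\mO$), so $\Delta^G_2\bar u^0(x)\le0$ and $\Delta^G_\infty\bar u^0(x)\le0$, whence $\L^G_p\bar u^0(x)\le0$ and therefore $\bar u^1(x)=\bar u^0(x)+\alpha\L^G_p\bar u^0(x)\le\bar u^0(x)$; on $\mO$ the two agree, so $\bar u^1\le\bar u^0$ on all of $\X$. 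Now apply Lemma \ref{lem:comparison} to the pair $u^k:=\bar u^{k+1}$ and $v^k:=\bar u^k$, both of which satisfy \eqref{eq:iter} with equality (hence both \eqref{eq:subGD} and \eqref{eq:superGD}), which agree on $\mO$, and which satisfy $\bar u^1\le\bar u^0$ at $k=0$; since the horizon $T$ is arbitrary, this yields $\bar u^{k+1}\le\bar u^k$ on $\X$ for all $k\ge0$. The mirror-image argument using $\underline u^0$ from \eqref{eq:uzero2}, which is a subsolution, gives $\underline u^{k+1}\ge\underline u^k$ for all $k$. One further application of Lemma \ref{lem:comparison}, this time to $u^k:=\underline u^k$ against $v^k:=\bar u^k$ (using the trivial ordering $\underline u^0=\min_\mO g\le\max_\mO g=\bar u^0$ and equality on $\mO$), gives $\underline u^k\le\bar u^k$ on $\X$ for all $k$.

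\textbf{Convergence and the error bound.} The sequence $\bar u^k$ is nonincreasing and bounded below by $\underline u^0$, and $\underline u^k$ is nondecreasing and bounded above by $\bar u^0$; hence $\bar u^k(x)\to\bar u^\infty(x)$ and $\underline u^k(x)\to\underline u^\infty(x)$ pointwise for some $\bar u^\infty,\underline u^\infty:\X\to\R$. Since $\L^G_p$ is continuous on $\R^{|\X|}$ (it is a finite combination of affine functions of the coordinates, sums, minima and maxima), one may pass to the limit $k\to\infty$ in $\bar u^{k+1}(x)=\bar u^k(x)+\alpha\L^G_p\bar u^k(x)$ to get $\L^G_p\bar u^\infty(x)=0$ for $x\in\X\setminus\mO$, while $\bar u^\infty=g$ on $\mO$; thus $\bar u^\infty$ solves \eqref{eq:plap_graph_game} with $f=0$, and by uniqueness of the solution on a connected graph, $\bar u^\infty=u_*$. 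Likewise $\underline u^\infty=u_*$, which is exactly \eqref{eq:conv}. Because $\bar u^k$ decreases to $u_*$ we have $\bar u^k\ge u_*$ for all $k$, and because $\underline u^k$ increases to $u_*$ we have $\underline u^k\le u_*$; hence $\underline u^k\le u_*\le\bar u^k$. Writing $u^k-u_*=\tfrac12(\bar u^k-u_*)+\tfrac12(\underline u^k-u_*)$, with the first summand $\ge0$ and the second $\le0$, and using $\bar u^k\ge\underline u^k$, gives $-\tfrac12|\bar u^k-\underline u^k|\le u^k-u_*\le\tfrac12|\bar u^k-\underline u^k|$, which is \eqref{eq:GDconv}.

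\textbf{Main obstacle.} There is no deep difficulty here; the care needed is in bookkeeping. Each invocation of Lemma \ref{lem:comparison} must be matched against its hypotheses, the crucial ones being the $k=0$ orderings $\bar u^1\le\bar u^0$ and $\underline u^1\ge\underline u^0$ (coming from $\bar u^0$, $\underline u^0$ being super- and subsolutions of the elliptic graph equation) and the trivial $\underline u^0\le\bar u^0$, and the standing constraint $\alpha\le p/(2p-3)$ is carried along simply because it is the hypothesis of that lemma. It is worth noting that no separate maximum principle for $u_*$ is required: the sandwich $\underline u^k\le u_*\le\bar u^k$ falls out for free from monotone convergence of the two sequences to $u_*$. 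The remaining ingredients — monotone convergence, continuity of $\L^G_p$, and uniqueness of the solution of \eqref{eq:plap_graph_game} — are routine.
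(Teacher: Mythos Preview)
Your proof is correct and follows the same overall architecture as the paper's: comparison principle, monotonicity of $\bar u^k$ and $\underline u^k$, pointwise convergence by monotone boundedness, identification of the limits with $u_*$ via continuity of $\L^G_p$ and uniqueness, and the sandwich $\underline u^k\le u_*\le\bar u^k$ for the error bound.

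There are two small differences in execution worth noting. First, you obtain monotonicity $\bar u^{k+1}\le\bar u^k$ by applying Lemma~\ref{lem:comparison} to the shifted pair $(\bar u^{k+1},\bar u^k)$ after checking the base case $\bar u^1\le\bar u^0$; the paper instead proves by induction that $\L^G_p\bar u^k\le 0$ for all $k$, via an auxiliary function $w$ that modifies $\bar u^k$ at a single point. Your route is cleaner and reuses the comparison lemma uniformly. Second, the paper derives the sandwich $\underline u^k\le u_*\le\bar u^k$ directly, by comparing each sequence against the stationary sequence $u_*$ (which satisfies both \eqref{eq:subGD} and \eqref{eq:superGD} with equality), and then deduces \eqref{eq:GDconv} \emph{before} proving convergence; you instead prove convergence first and then read off the sandwich from monotone convergence to $u_*$. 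Both orderings are valid, but the paper's gives the error bound \eqref{eq:GDconv} independently of the convergence statement, which is a slight conceptual gain.
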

\begin{remark}
Theorem \ref{thm:GDconv} proves convergence of the gradient descent-type scheme, and gives us a simple way to set the stopping condition. If we fix $\eps>0$ and iterate until
\begin{equation}\label{eq:stopping}
|\bar{u}^k(x) - \underline{u}_k(x)|\leq 2\eps,
\end{equation}
then Theorem \ref{thm:GDconv} guarantees that $u^k(x) = \tfrac{1}{2}(\bar{u}^k(x) + \underline{u}_k(x))$ satisfies $|u^k(x)-u_*(x)|\leq \eps$.
\label{rem:stopping}
\end{remark}
\begin{proof}
By Lemma \ref{lem:comparison} we have
\[\underline{u}_k(x) \leq u_*(x) \leq \bar{u}^k(x)\]
for all $k\geq 0$ and $x\in \X$. It follows that
\[u^k(x) - u_*(x) \leq \frac{1}{2}(\bar{u}^k(x) + \underline{u}_k(x)) - \underline{u}_k(x) \leq \frac{1}{2} |\bar{u}^k(x) - \underline{u}_k(x)|,\]
and
\[u_*(x) - u^k(x) \leq \bar{u}^k(x) - \frac{1}{2}(\bar{u}^k(x) + \underline{u}_k(x)) \leq \frac{1}{2} |\bar{u}^k(x) - \underline{u}_k(x)|.\]
This establishes \eqref{eq:GDconv}.

We now prove \eqref{eq:conv}. We claim that $\L^G_p \bar{u}^k(x) \leq 0$ for all $k\geq 0$ and $x\in \X\setminus \mO$. The proof is by induction. Since $\bar{u}^0(y) - \bar{u}^0(x) \leq 0$ for all $x\in \X\setminus \mO$ and all $y\in \X$, the case $k=0$ is trivial. Now assume $\L^G_p \bar{u}^k(x) \leq 0$ for all $x\in \X\setminus \mO$. Fix $x_0\in \X\setminus \mO$ and define
\[w(x) = 
\begin{cases}
\bar{u}^k(x_0) + \alpha\L^G_p \bar{u}^k(x_0),&\text{if }x=x_0,\\
\bar{u}^k(x),&\text{if }x\neq x_0.
\end{cases}\]
Then $w\leq \bar{u}^k$ and so 
\[w(x_0) + \alpha\L^G_p w(x_0) \leq \bar{u}^k(x_0) + \alpha\L^G_p \bar{u}^k(x_0) = \bar{u}^{k+1}(x_0),\]
as in the proof of Lemma \ref{lem:comparison}. Since $w(x_0)=\bar{u}^{k+1}(x_0)$ and $w \geq \bar{u}^{k+1}$, we have 
\[0 \geq \L^G_p w(x_0) \geq \L^G_p \bar{u}^{k+1}(x_0),\]
which establishes the claim. 

By a similar argument, we have that $\L^G_p \underline{u}^k(x) \geq 0$ for all $k\geq 0$ and $x\in \X\setminus \mO$. It follows that $\bar{u}^{k+1}\leq \bar{u}^k$ and $\underline{u}^{k+1} \geq \underline{u}^k$ for all $k\geq 0$. Therefore, there exists $\bar{u},\underline{u}:\X\to \R$ such that
\[\lim_{k\to \infty}\bar{u}^k(x) = \bar{u}(x) \ \  \text{ and }\ \ \lim_{k\to \infty}\underline{u}^k(x) = \underline{u}(x).\]
By continuity of $\L^G_p$ we have that $\bar{u}$ and $\underline{u}$ both satisfy \eqref{eq:plap_graph_game}. Since the graph is connected, solutions of \eqref{eq:plap_graph_game} are unique, and hence $\bar{u}=\underline{u}=u_*$, which completes the proof.
\end{proof}

If we make a minor modification to the equation, then we can obtain a linear convergence rate. Let $\eps>0$ and consider the iteration 
\begin{equation}\label{eq:iter2}
u_\eps^{k+1}(x) = u_\eps^k(x) + \alpha (\L^G_p u_\eps^k(x) - \eps u_\eps^k(x)),
\end{equation}
for $x\in \X\setminus \mO$, and $u^{k+1}(x) = g(x)$ for $x\in \mO$. We prove below that this iteration scheme converges at a \emph{linear rate} to the solution $u_\eps:\X\to \R$ of
\begin{equation}\label{eq:plapeps}
\left\{\begin{aligned}
\eps u_\eps - \L^G_p u_\eps(x) &= 0&&\text{if }x\in \X\setminus \mO\\ 
u_\eps(x) &=g(x)&&\text{if }x\in \mO.
\end{aligned}\right.
\end{equation}
When $\eps>0$, the solution $u_\eps$ of \eqref{eq:plapeps} is unique even when the graph is disconnected. We have the following convergence theorem, which is an adaptation of  the contraction argument from \cite{oberman2006convergent}.
\begin{theorem}[Convergence rate]\label{thm:LinearRate}
Assume $w_{xy}\leq 1$ for all $x,y\in \X$ and $\alpha \leq p / ((2+\eps)p - 3)$. Let $u^0:\X\to \R$ and define $u_\eps^k$ by \eqref{eq:iter2}, and let $u_\eps$ be the solution of \eqref{eq:plapeps}. Then for every $k\geq 0$ we have
\begin{equation}\label{eq:rate}
\max_{x\in \X}|u^k_\eps(x) - u_\eps(x)| \leq (1-\eps)^k\max_{x\in \X}|u^0(x) - u_\eps(x)|.
\end{equation}
\end{theorem}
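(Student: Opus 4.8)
The plan is to read the iteration \eqref{eq:iter2} as a Banach fixed-point iteration. Define the map $S$ on functions $v:\X\to\R$ by $Sv(x)=v(x)+\alpha\big(\L^G_p v(x)-\eps v(x)\big)$ for $x\in\X\setminus\mO$ and $Sv(x)=g(x)$ for $x\in\mO$. Then \eqref{eq:iter2} is exactly $u_\eps^{k+1}=S u_\eps^k$, and $v$ solves \eqref{eq:plapeps} if and only if $Sv=v$. Consequently, once $S$ is shown to be a contraction on $(\R^{|\X|},\|\cdot\|_{L^\infty(\X)})$, the Banach fixed-point theorem simultaneously gives the existence and uniqueness of $u_\eps$ (already recorded in the text) and the geometric estimate \eqref{eq:rate}. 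So the whole theorem reduces to establishing this contraction property, with explicit contraction factor $1-\alpha\eps$.

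To prove it, I would first record two elementary facts about $S$ restricted to $\X\setminus\mO$, both obtained by re-running the computation in the proof of Lemma \ref{lem:comparison} while carrying along the extra term $-\alpha\eps v(x)$. Fix $x\in\X\setminus\mO$ and, given $u\le v'$ on $\X$, choose $y_1,y_2\in\X$ realizing $\min_y w_{xy}(v'(y)-v'(x))$ and $\max_y w_{xy}(u(y)-u(x))$ respectively, exactly as in that proof. Expanding $Su(x)$ as a linear combination of nodal values, the $\L^G_p$ part contributes coefficient sum $0$, the term $v(x)$ contributes $1$, and $-\alpha\eps v(x)$ contributes $-\alpha\eps$, so the coefficients sum to $1-\alpha\eps$; the coefficient of the diagonal value $u(x)$ works out to $1-\alpha\eps-\tfrac{\alpha}{p}\bigl(1+(p-2)(w_{xy_1}+w_{xy_2})\bigr)$, which by $w_{xy}\le 1$ is at least $1-\alpha\,\tfrac{(2+\eps)p-3}{p}$, hence nonnegative precisely under the hypothesis $\alpha\le p/((2+\eps)p-3)$; all other coefficients are visibly nonnegative. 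Running the same chain of inequalities as in Lemma \ref{lem:comparison} then gives \emph{monotonicity}, $u\le v'$ on $\X\implies Su\le Sv'$ on $\X$, and since $\L^G_p$ annihilates additive constants it also gives the \emph{shift identity} $S(v'+c)(x)=Sv'(x)+(1-\alpha\eps)c$ for $x\in\X\setminus\mO$ and any $c\in\R$.

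Combining these yields the contraction. Given $u,v$ put $m=\|u-v\|_{L^\infty(\X)}$, so $v-m\le u\le v+m$; on $\X\setminus\mO$ monotonicity and the shift identity give $Su\le S(v+m)=Sv+(1-\alpha\eps)m$ and, symmetrically, $Su\ge Sv-(1-\alpha\eps)m$, while on $\mO$ one has $Su=g=Sv$ together with $(1-\alpha\eps)m\ge0$ (note $\alpha\eps\le1$ under the hypotheses, since $p/((2+\eps)p-3)\le1/\eps$ for $p\ge\tfrac{3}{2}$). Hence $\|Su-Sv\|_{L^\infty(\X)}\le(1-\alpha\eps)\|u-v\|_{L^\infty(\X)}$, and iterating from $u^0$ using $S u_\eps=u_\eps$ gives $\|u_\eps^k-u_\eps\|_{L^\infty(\X)}\le(1-\alpha\eps)^k\|u^0-u_\eps\|_{L^\infty(\X)}$, which is the asserted geometric rate \eqref{eq:rate}.

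The only genuinely technical point is the first step: carrying the $-\alpha\eps v(x)$ term through the algebra of Lemma \ref{lem:comparison} and verifying that the diagonal coefficient stays nonnegative, which is exactly what pins down the stepsize restriction $\alpha\le p/((2+\eps)p-3)$. Everything afterward is a routine Banach-contraction wrap-up; the one place needing separate attention is the labeled set $\mO$, where $S$ is a constant rather than the affine-shift map, so the two inequalities must be checked there by hand (they hold trivially because $(1-\alpha\eps)m\ge0$).
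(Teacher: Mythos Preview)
Your approach is essentially the paper's: define the iteration map, establish monotonicity and a constant-shift identity by rerunning the coefficient computation from Lemma~\ref{lem:comparison}, and conclude contraction in $L^\infty$. The only organizational difference is that the paper factors $\Phi_\eps[u]=(1-\alpha\eps)\bigl(u+\tfrac{\alpha}{1-\alpha\eps}\L^G_p u\bigr)$ and then invokes the $\eps=0$ monotonicity of Lemma~\ref{lem:comparison} with the modified step $\tilde\alpha=\alpha/(1-\alpha\eps)$, whereas you carry the $-\alpha\eps$ term through the algebra directly; both routes pin down the same stepsize restriction $\alpha\le p/((2+\eps)p-3)$ and the same contraction factor $1-\alpha\eps$.

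One point worth flagging: both your argument and the paper's actually yield the rate $(1-\alpha\eps)^k$, not the $(1-\eps)^k$ appearing in \eqref{eq:rate}. Since $\alpha$ is only assumed to satisfy an upper bound, $(1-\alpha\eps)^k$ can be strictly larger than $(1-\eps)^k$ (e.g.\ for large $p$ the stepsize bound forces $\alpha<1$). So the statement as written appears to contain a typo in the exponent base; your proof is correct for the rate $(1-\alpha\eps)^k$, which is what the argument genuinely delivers.
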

\begin{remark}
Letting $u_*$ solve \eqref{eq:plap_graph_game} and $u_\eps$ solve \eqref{eq:plapeps}, it follows from Theorem \ref{thm:LinearRate} that
\[\max_{x\in \X}|u^k_\eps(x) - u_*(x)| \leq C\left[\max_{x\in \X}|u_\eps(x) - u_*(x)| + (1-\eps)^k\right].\]
If we can quantify the error $|u_\eps - u_*|$, then this would prove a convergence rate for the original problem \eqref{eq:plap_graph_game} with $\eps=0$. However, it seems that proving error estimates between \eqref{eq:plap_graph_game} and \eqref{eq:plapeps} would require some additional strong assumptions on properties of the graph.
\label{rem:comb}
\end{remark}
\begin{proof}
Define
\begin{equation}\label{eq:Phi}
\Phi_\eps[u](x) =
\begin{cases}
u(x) + \alpha(\L^G_p u(x) - \eps u(x)),&\text{if }x\in \X\setminus \mO,\\
g(x),&\text{if }x\in \mO.
\end{cases}
\end{equation}
Then we have $u^k_\eps = \Phi^k_\eps[u^0]$. As in the proof of Lemma \ref{lem:comparison} we have
\[u\leq v \implies \Phi_0[u] \leq \Phi_0[v].\]
We also have
\[\Phi_0[u+C] = \Phi_0[u]+C\]
for any constant $C>0$.  It follows that 
\[\Phi_0[u] - \Phi_0[v]=\Phi_0[u - \max_{x\in \X}(u-v)] - \Phi_0[v] + \max_{x\in \X}(u-v) \leq \max_{x\in \X}(u-v).\]
%The opposite inequality is similar, so we obtain
%\[\|\Phi[u] - \Phi[v]\|_\infty \leq \|u-v\|_\infty.\]
Since
\[\Phi_\eps[u]=(1-\alpha\eps)\left( u + \frac{\alpha}{1-\alpha\eps}\L^G_p u \right),\]
we have
\[\Phi_\eps[u] - \Phi_\eps[v] \leq (1-\eps)\max_{x\in \X}(u-v)\]
provided 
\[\frac{\alpha}{1-\alpha\eps}\leq \frac{p}{2p-3},\]
which is equivalent to
\[\alpha \leq \frac{p}{(2+\eps)p - 3}.\]
It follows that $\Phi_\eps$ is a contraction in the norm $\|u\|_\infty:=\max_{x\in \X}|u(x)|$, and so there exists a unique fixed point $u_\eps:\X\to \R$ such that $\Phi_\eps[u_\eps]=u_\eps$ and
\[\|u^k_\eps - u_\eps\|_\infty \leq (1-\eps)^k\|u^0 - u_\eps\|_\infty\] 
for any $u:\X\to \R$. This completes the proof.
\end{proof}

% This is the Newton Like Formulation
\subsubsection{A Newton-like Algorithm\label{sec:newtonlike}}

We now consider a Newton-like method for solving \eqref{eq:plap_graph_game}. Newton's method is based on iteratively solving a linearized version of the problem.  In order to linearize (\ref{eq:plap_graph_game}), we define $y^k_+$ and $y^k_-$ by
\begin{equation}
\begin{alignedat}{2}
y^k_+(x) &\in \mbox{argmax} ~~ w_{xy} ( u^k(x) - u^k(y) ) \\
y^k_-(x)  &\in \mbox{argmin}  ~~ w_{xy} ( u^k(x) - u^k(y) ), 
\end{alignedat} \label{eq:index_beta}
\end{equation}
 and define $\beta^k_{xy}$ by
\begin{equation} \beta^k_{xy} = w_{xy} \Big(1 + d_x (p-2) \big[ \delta_{y = y^k_+} + \delta_{y = y^k_-} \big] \Big). \label{eq:beta_coeffs} \end{equation}
We also define
\begin{equation} \mathscr{L}^G_{p,k}u(x) :=\frac{1}{d_xp}\sum_y \beta^k_{xy} ( u(y) - u(x) ). \label{eq:newtonlike_def} \end{equation}
The Newton-like iteration computes $u^{k+1}$ as the solution of
\begin{equation}\label{eq:newtonlike_iter}
\left\{\begin{aligned}
-\mathscr{L}^G_{p,k} u^{k+1}(x) &= f(x)&&\text{if }x \in \mX\setminus \mO\\ 
u(x) &=g(x)&&\text{if }x \in \mO.
\end{aligned}\right.
\end{equation}
We note that if $y^{k+1}_\pm=y^k_\pm(x)$ for all $x$, then $f=-\mathscr{L}^G_{p,k} u^{k+1}=-\L^G_p u^{k+1}$. Hence, we obtain convergence to the exact solution as soon as the locations of the min and max in \eqref{eq:index_beta} are correct. Thus, the algorithm can converge to the exact solution in a finite number of iterations. It seems rather difficult to construct a convergence proof for the Newton-like iterations, and we leave this to future work. We note that $\mathscr{L}^G_{p,k}$ is a linear operator, but may not be symmetric, which is one drawback of the method.

\subsubsection{Semi-implicit Approach\label{sec:semi}}

Here, we extend the semi-implicit method of Oberman \cite{oberman2013finite} to the graph setting.  Given $\theta(x) \geq 1$, we add $- \theta(x)  \Delta^G_2 u(x) / (2 d_x)$ to both sides of \eqref{eq:plap_graph_game}, to obtain
\[ -\frac{\theta(x)}{2 d_x} \Delta^G_2 u(x) = - \bigg( \frac{\theta(x)}{2 d_x} - \frac{1}{d_x p} \bigg) \Delta^G_2 u(x) + \bigg(1-\frac{2}{p}\bigg) \Delta^G_\infty u(x) + f(x). \]
 Solving for $\Delta^G_2 u(x)$ on the left hand side reduces this equation to
\[ -\Delta^G_2 u(x) = - \bigg( \frac{\theta(x) p - 2}{\theta(x) p} \bigg) \Delta^G_2 u(x) + \frac{2 d_x}{\theta(x) p} \big(p-2\big) \Delta^G_\infty u(x) + \frac{2 d_x}{\theta(x)} f(x), \] 
 which suggests the iterative scheme
\begin{equation} -\Delta^G_2 u^{k+1}(x) = \beta(x) \Big(2 \gamma(x) \Delta^G_\infty u^k(x) - \Delta^G_2 u^k(x) \Big) + \frac{2 d_x}{\theta(x)} f(x), \label{eq:semi_iteration}\end{equation}
 where we have defined
\[ \beta(x) = \frac{\theta(x) p - 2}{\theta(x) p} \hsp\hsp\hsp \mbox{ and } \hsp\hsp\hsp \gamma(x) = d_x ~ \frac{p-2}{\theta(x) p - 2}. \]

One fundamental advantage of the semi-implicit iteration \eqref{eq:semi_iteration} is that, unlike (\ref{eq:newtonlike_iter}), the iteration (\ref{eq:semi_iteration}) requires the solution of the same symmetric positive definite system at each iteration. This means we can dramatically speed up the solver, for large scale problems, by pre-computing a Cholesky factorization and using it at each iteration, or pre-computing an Incomplete Cholesky factorization, to be used as a preconditioner for CG. Another favorable feature of this scheme is the fact that the conditioning of the system is independent of $p$, allowing the scheme to reliably solve problems for any $p$.    

The choice of $\theta(x)$ affects stability and convergence of the scheme. We give a heuristic argument here suggesting the iteration \eqref{eq:semi_iteration} is a contraction when 
\begin{equation} \label{eq:theta_bound} 
\theta(x) \geq \frac{2}{p} + d_x \Big(1 - \frac{2}{p} \Big) =: \eta(x).
\end{equation}
Define $y^k_\pm(x)$ as in \eqref{eq:index_beta} and write $y_\pm$ in place of $y^k_\pm(x)$. Noting that \eqref{eq:theta_bound} implies $-1 \leq 2\gamma(x) - 1 \leq 1$, we have
\begin{align*}
&|2 \gamma \Delta^G_\infty u^k(x) - \Delta^G_2 u^k(x)| \\
&=\left| (2 \gamma(x) - 1) \Big( w_{xy_{-}} \big(u^k(y_-) - u^k(x)\big) + w_{xy_+} \big(u^k(y_+) - u^k(x)\big) \Big) - \sum_{y \neq y_\pm} w_{xy} \big( u^k(y) - u^k(x) \big)\right|\\
&\leq|2 \gamma(x) - 1| \Big( w_{xy_{-}} |u^k(y_-) - u^k(x)| + w_{xy_+} |u^k(y_+) - u^k(x)| \Big) + \sum_{y \neq y_\pm} w_{xy} | u^k(y) - u^k(x) |\\
&\leq  \sum_{y \in \mX} w_{xy} | u^k(y) - u^k(x) |.
\end{align*}
Therefore, for  $f\equiv 0$ it follows from the definition of the iteration \eqref{eq:semi_iteration} that
\[\left|\sum_{y \in \mX} w_{xy} ( u^{k+1}(y) - u^{k+1}(x) )\right| \leq \beta(x)\sum_{y \in \mX} w_{xy} | u^k(y) - u^k(x) |,\]
where $0 < \beta(x)<1$.  While this is not a contraction, it is suggestive of what we observe in practice, namely that the semi-implicit iteration is a contraction when \eqref{eq:theta_bound} is satisfied. It seems that a convergence proof for the semi-implicit method not straightforward. Indeed, even in the case of a uniform grid in $2$-dimensions, a proof of convergence for the semi-implicit method is not available \cite{oberman2013finite}.

\section{Algorithm Comparisons}
\label{sec:experiments}

We now give a numerical study of the performance of each algorithm on synthetic problems. This allows us, in particular, to control the intrinsic dimension of the graph---the dimension of the ambient Euclidean space or manifold from which the graph is sampled---and study how the intrinsic dimension affects convergence rates.

\subsection{Experiment Design}
This section specifies design choices for our synthetic data experiments. 
\subsubsection{Problem S (synthetic data)\label{sec:synthetic}} 
We sample $\mX$ from a uniform distribution on $[0,1]^{d}$, and label $10$ of these points with labels sampled from a uniform distribution on $[0, 1]$. The experiments we report on use $d=2$, $d = 5$ and $d=10$, as the computational cost did not change substantially for $d > 10$.

\subsubsection{Graph Construction\label{sec:graph_construction}}

Define the relation $\sim_K$ on $\mX$ by $x\sim_K y$ if $x$ is among the $k$ nearest neighbors of $y$ in Euclidean distance. We construct a symmetric $K$-nearest neighbor graph as follows
\begin{equation} \label{eq:weights_definition}
w_{xy} =
\begin{cases}
\exp \left( - \frac{ |x - y|^2} { \sigma^2} \right) ,& \text{if }x \sim_K y \text{ or } y \sim_K x \\
0,&  \text{otherwise.}
\end{cases}
\end{equation}
The constant $\sigma$ plays the role of being a \emph{typical length scale} for the problem. In our experiments, we compute it as 
$ \sigma = \frac{1}{2} \max \big\{ |x - y| \, : \, w_{xy} > 0 \big\}$. We use $K = 10$ in all experiments. Other choices of $K$ would produce similar accuracy, but larger $K$ would increase the cost of each iteration. The only requirement is that $K$ needs to be large enough to ensure the graph is connected. It should be noted that, even though we prescribe $K = 10$, after we apply (\ref{eq:weights_definition}), most vertices will have a higher number of neighbors (so that symmetry can be enforced). 

\subsubsection{Error Reporting\label{sec:vari_error_conventions}}
For the variational formulation, we report
\begin{equation} \ep  = \frac{1}{n \sigma^{d + p - 1}}\max_{x \in \mX} \big| \Delta^G_p u(x) \big|, \label{eq:vari_res_tol} \end{equation}
which amounts to scaling the largest residual in equation (\ref{eq:plap_optimality}), by the number of points, and the typical length scale $\sigma$. This scaling ensures a fair comparison across different $n, p, d$. For the game theoretic formulation, the scaling of the residual in equation (\ref{eq:plap_graph_game}) is different, and we report 
\begin{equation} \ep = \frac{1}{\sigma} \max_{x \in \mX} \big| \mathscr{L}^G_p u(x) \big|. \label{eq:game_res_tol} \end{equation}

\subsection{Homotopy Results for Newton and Newton-like Methods}
\label{sec:homotopy_numerics}

Homotopy is very effective at improving the performance and reliability of Newton's method for higher $p$. We illustrate this by solving problem S with $d=10$, $p=50$, $m=10$ and $n=10^4$. We use the solution to the $p = 2$ problem as ${\bf u}^0$ for the $p = 3$ problem. Then, we apply homotopy until $p = 50$ is reached. For each $p$, we require $\ep < 10^{-12}$ according to (\ref{eq:vari_res_tol}). Table \ref{table:homotopy_newton} reports $\ep$ at each iteration. \nl 

\nd We repeat the experiment using the Newton-like algorithm for the game-theoretic formulation (now $\ep$ is given by (\ref{eq:game_res_tol})). As we can see from Table \ref{table:homotopy_newton_like}, homotopy is equally effective in this case.

\begin{table}
\begin{center} 
\begin{tabular}{|c | c | c | c | c | c | c | c | c | c | c | c |} \cline{2-12}
\multicolumn{1}{c}{}  & \multicolumn{11}{|c|} {$p$} \\ \hline
$N$ & $3$ & $4$ & $6$ & $8$ & $10$ & $15$ & $20$ & $25$ & $30$ & $40$ & $50$ \\ \hline 
1 & 2e-01 & 3e-02 & 1e-02 & 8e-04 & 2e-04 & 8e-05 & 4e-06 & 3e-07 & 3e-08 & 1e-09 & 2e-11 \\ \hline 
2 & 5e-02 & 3e-03 & 4e-04 & 1e-05 & 7e-07 & 1e-06 & 2e-08 & 6e-10 & 3e-11 & 5e-12 & 3e-14 \\ \hline 
3 & 7e-03 & 2e-04 & 4e-05 & 7e-08 & 2e-10 & 3e-10 & 3e-13 & 4e-15 & 1e-16 & 9e-17 & - \\ \hline 
4 & 8e-04 & 1e-05 & 4e-06 & 2e-09 & 1e-13 & 2e-14 & - & - & - & - & - \\ \hline 
5 & 5e-05 & 4e-07 & 8e-07 & 5e-12 & - & - & - & - & - & - & - \\ \hline 
6 & 4e-07 & 1e-09 & 7e-08 & 2e-16 & - & - & - & - & - & - & - \\ \hline 
7 & 2e-11 & 2e-14 & 6e-10 & - & - & - & - & - & - & - & - \\ \hline 
8 & 8e-15 & - & 1e-12 & - & - & - & - & - & - & - & - \\ \hline 
9 & - & - & 2e-16 & - & - & - & - & - & - & - & - \\ \hline 
\end{tabular} \end{center} 
\caption{Newton's method with homotopy solved problem S for $p=50,\ep < 10^{-12}$ in just $56$ iterations. We take increasingly large steps in $p$ while remaining in the quadratic convergence regime, suggesting we could go much farther than $p = 50$. \label{table:homotopy_newton} \vspace{-20pt}}
\end{table}

\begin{table}
\begin{center} 
\begin{tabular}{|c | c | c | c | c | c | c | c | c | c | c | c |} \cline{2-12}
\multicolumn{1}{c}{}  & \multicolumn{11}{|c|} {$p$} \\ \hline
$N$ & $3$ & $4$ & $6$ & $8$ & $10$ & $15$ & $20$ & $25$ & $30$ & $40$ & $50$ \\ \hline 
1 & 4e-02 & 1e-02 & 9e-03 & 4e-03 & 2e-03 & 3e-03 & 1e-03 & 7e-04 & 5e-04 & 6e-04 & 3e-04\\ \hline 
2 & 4e-03 & 3e-03 & 3e-03 & 2e-03 & 1e-03 & 3e-03 & 2e-03 & 7e-04 & 3e-04 & 9e-04 & 6e-04\\ \hline 
3 & 1e-03 & 8e-04 & 2e-03 & 6e-04 & 5e-04 & 2e-03 & 9e-04 & 1e-04 & 9e-05 & 1e-04 & 3e-05\\ \hline 
4 & 1e-04 & 1e-04 & 5e-04 & 5e-04 & 6e-05 & 3e-04 & 5e-04 & 3e-16 & 3e-16 & 3e-16 & 3e-16\\ \hline 
5 & 6e-06 & 1e-05 & 9e-05 & 2e-04 & 2e-07 & 6e-05 & 3e-16 & - & - & - & -\\ \hline 
6 & 2e-16 & 2e-16 & 2e-16 & 8e-06 & 4e-16 & 3e-16 & - & - & - & - & -\\ \hline 
7 & - & - & - & 3e-16 & - & - & - & - & - & - & -\\ \hline 
\end{tabular} \end{center} 
\caption{The Newton-like method with homotopy solved problem S for $p = 50$ and $\ep < 10^{-12}$ in $52$ iterations.
Similar to Newton's method, increasingly large step sizes in $p$ are possible, suggesting the ability to go even higher in $p$. \vspace{-20pt}}
\label{table:homotopy_newton_like}
\end{table}

\subsection{Computational Cost vs.~Dimension \texorpdfstring{$d$}{d} and Size \texorpdfstring{$n$}{n} \label{sec:comp_cost}}

Our timing results were performed on a laptop with 16GB of RAM memory. For the gradient descent method, we used a C-language implementation. Other methods were implemented in MATLAB 2018, as the computational cost was dominated by matrix operations. For these methods, we observed the cost of direct solvers increase by as much as $100 \times$, when going from $d = 2$ up to $d = 100$, hence we used iterative methods instead. For Newton's method, we used a conjugate gradient's method (CG), preconditioned with an Incomplete Cholesky factorization. Given that the matrix system for Newton-like's method is no longer symmetric, we used a Generalized Minimal Residual Algorithm (GMRES) \cite{saad1986gmres}, preconditioned with an Incomplete LU factorization instead. The semi-implicit method performed very well with a preconditioned CG method. Since the matrix to be inverted does not change, we compute an Incomplete Cholesky factorization, with a drop tolerance of $10^{-1}$, at the very beginning, and use it as our preconditioner in each iteration.

We explore how the Newton, Newton-like, Semi-Implicit, and Gradient Descent algorithms scale with $n, d$. To measure this relationship, we solved problem S for $p = 11$, until $\ep < 10^{-7}$, according to (\ref{eq:vari_res_tol}) and (\ref{eq:game_res_tol}) respectively. These results are reported in Figure \ref{fig:comp_cost1}. In the figure, the complexity of Newton's method scales roughly like $n^{0.8}$, the Newton-like method scales roughly like $n^{1.3}$, the semi-implicit method scales like $n^1$ for $d=2$ and $d^{0.75}$ for $d=5,10$, and the gradient descent-type method scales like $n^2$ for $d=2$, and $n^{1.5}$ for $d=5,10$.  We note that Figure \ref{fig:comp_cost1}a and Figures \ref{fig:comp_cost1}(b-d) cannot be directly compared, since the parameters $p, \ep$ do not have the same meaning in both problems, but give a rough idea of the relative performance of the variational and game-theoretic solvers.  We also illustrate our ability to solve large scale problems by solving problem S, with $m = 10$, $K = 10$, $n = 5 \cdot 10^5$ and $d = 10$. For the variational formulation, we use Newton's method starting from the solution of the $p = 2$ problem ($1$-step homotopy). For the game theoretic formulation, we choose the semi-implicit method. In both cases, iterative solvers were used. We plot the results, averaged over $5$ trials in Figure \ref{fig:large_scale_new}. In the figure, the CPU time for Newton's method scales like $\eps^{-0.05}$ with tolerance $\eps>0$, while the CPU time for the semi-implicit method scales like $\eps^{-0.2}$.  
% trim: left, bottom, right, top

\begin{figure}
\centering
\hspace{-1cm}
\subfloat[Newton's cost for $2^6 \leq n \leq 2^{15}$, $p = 11$.\label{fig:method_vs_ndA} ]
{\includegraphics[width=0.51\textwidth,clip = true, trim = 10 30 50 70]{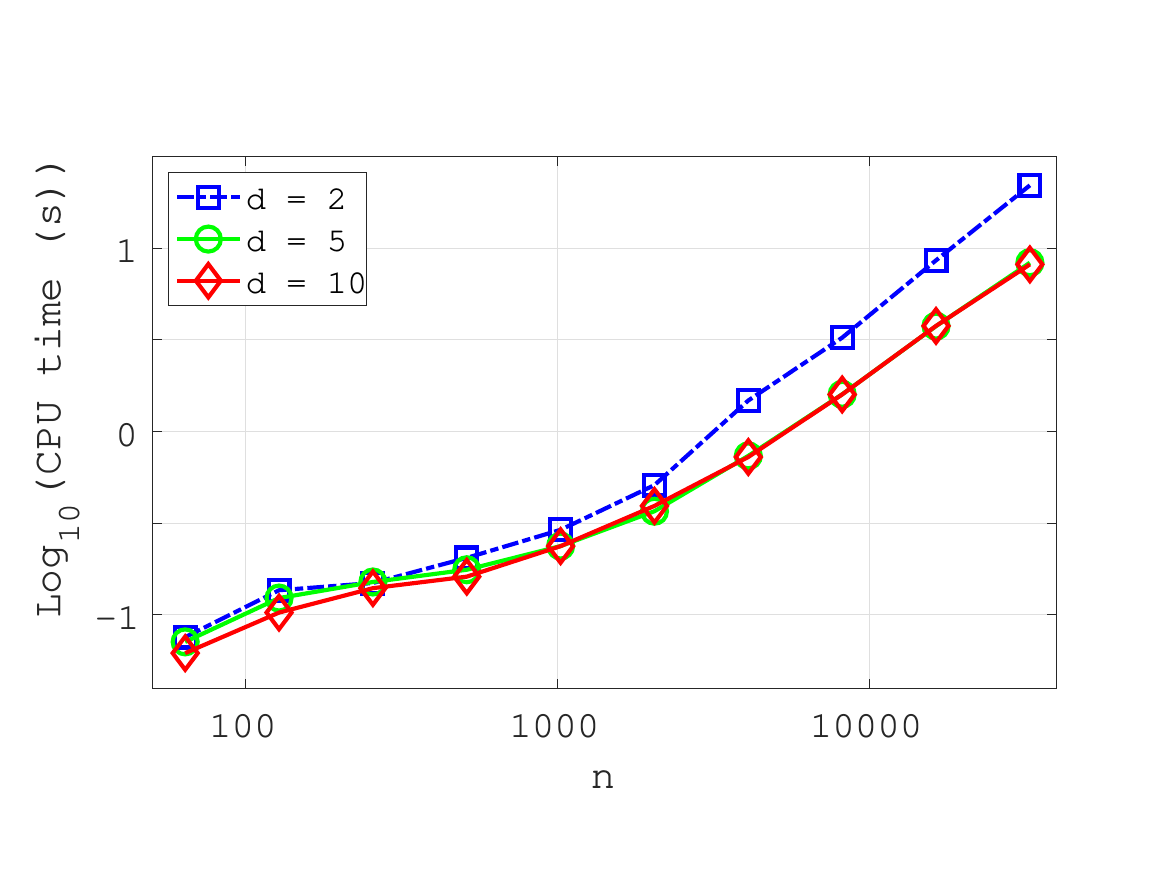}}
\subfloat[Newton-like's cost for $2^6 \leq n \leq 2^{15}$, $p = 11$. \label{fig:method_vs_ndB} ]
{\includegraphics[width=0.51\textwidth,clip = true, trim = 10 30 50 70]{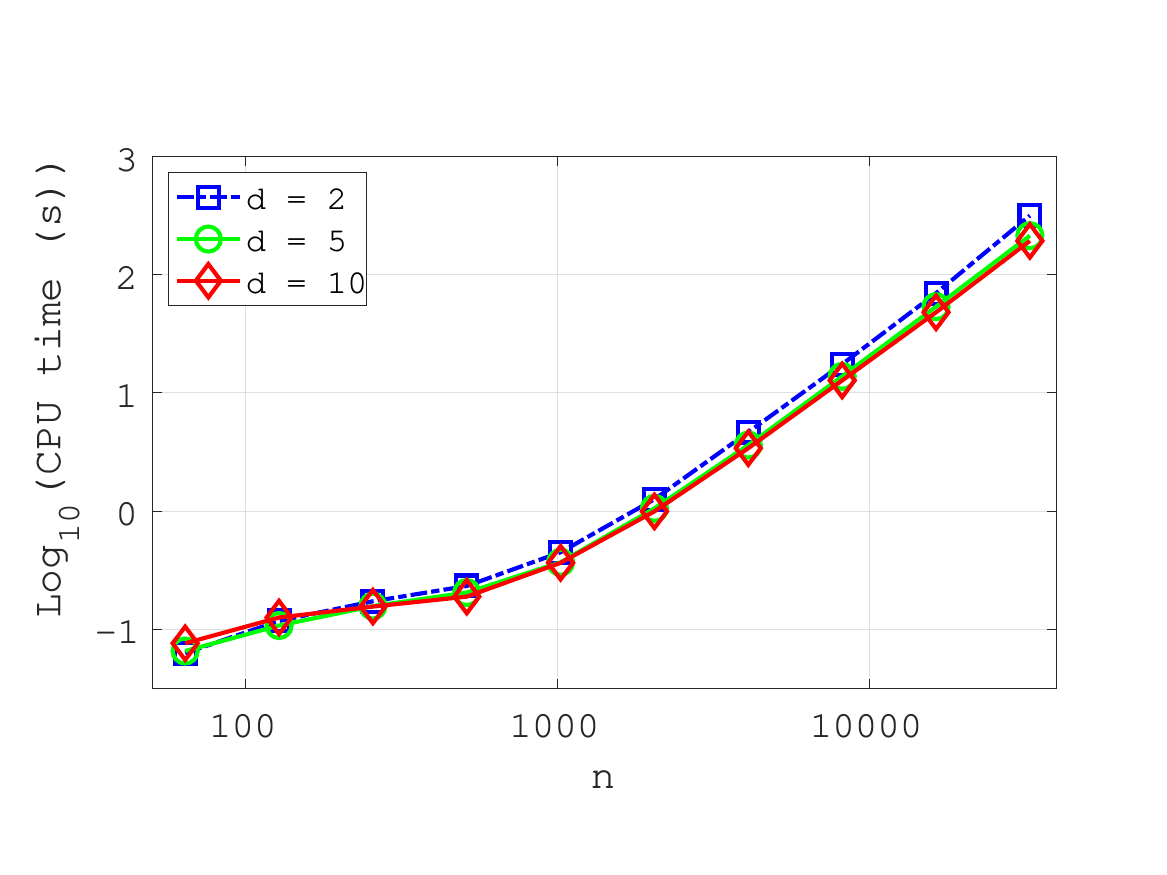}}\\
\vspace{-0.3cm}\hspace{-1cm}
\subfloat[Semi-Implicit's cost for $2^6 \leq n \leq 2^{15}$, $p = 11$. \label{fig:method_vs_ndC} ]
{\includegraphics[width=0.51\textwidth,clip = true, trim = 10 30 50 70]{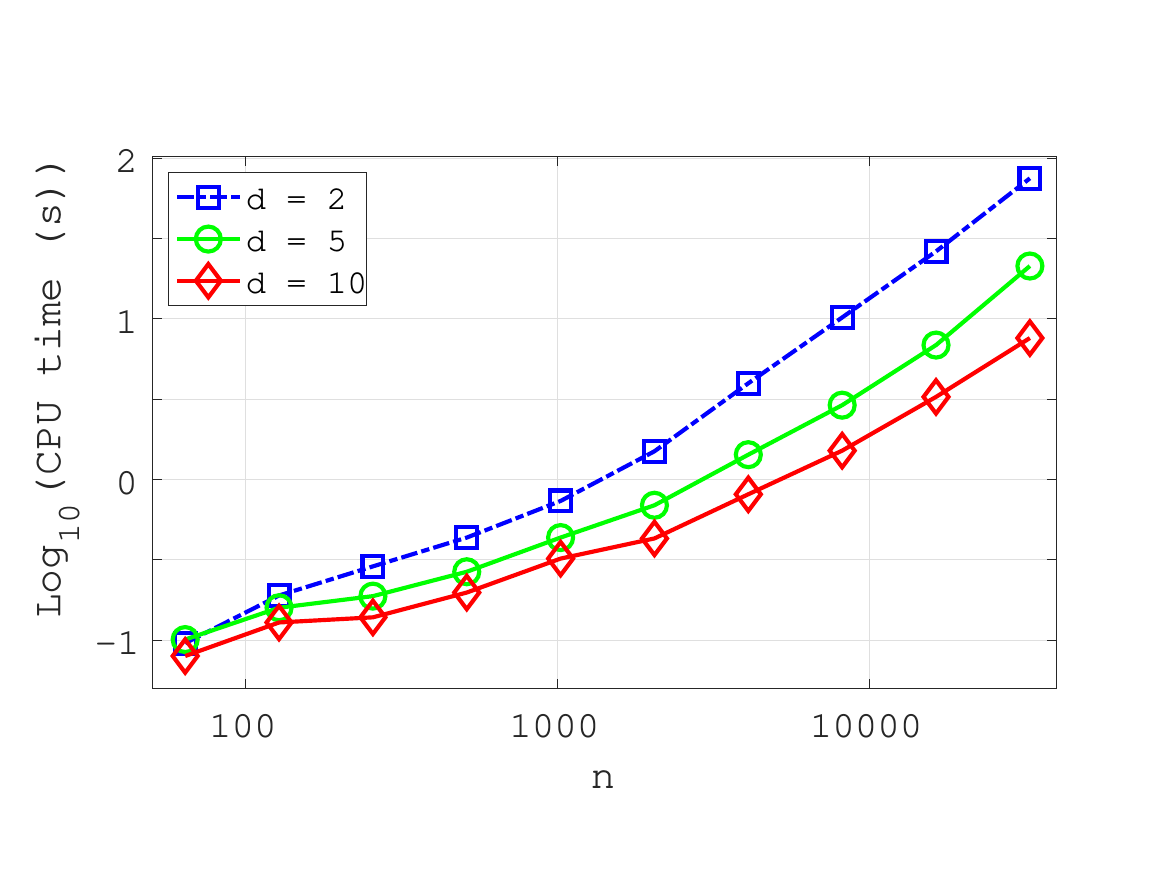}}
\subfloat[Gradient descent's cost, $2^6 \leq n \leq 2^{15}, p = 11$. \label{fig:method_vs_ndD} ]
{\includegraphics[width=0.51\textwidth,clip = true, trim = 10 30 50 70]{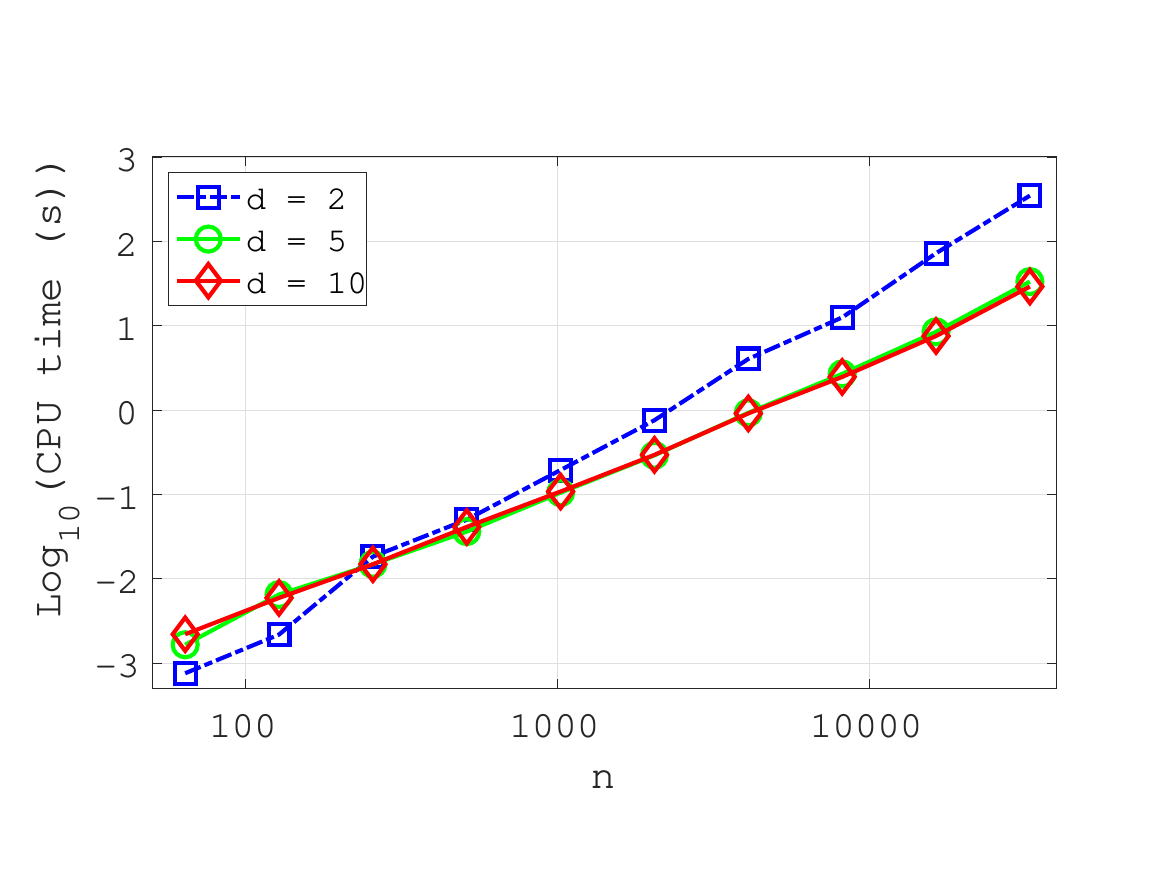}}\\
\caption{The computational cost of solving problem $S$, averaged over $5$ trials. (a) Newton's cost in solving problem $S$ until $\ep < 10^{-7}$ with preconditioned CG. (b) Newton-like's cost in solving problem $S$, until $\ep < 10^{-7}$, with preconditioned GMRES. (c) Semi-implicit method's cost for solving problem $S$ until $\ep < 10^{-7}$, with preconditioned CG. (d) Gradient descent's cost for solving problem $S$ until $\ep < 10^{-7}$.}
\label{fig:comp_cost1} 
\end{figure}

\begin{remark}
It is interesting to point out that the results in Figure \ref{fig:comp_cost1} show that the cost of all four methods decreases (sometimes significantly) as the dimension $d$ increases. This is due to the graph Laplacian matrices having better condition numbers in higher dimensions. To see why this is the case, we recall that on a uniform grid, the condition number for a discrete Laplacian is on the order of $\Delta x^{-2}$, where $\Delta x$ is the grid resolution. When $\Delta x$ is small, the Laplacian is poorly conditioned and iterative methods are stiff without some form of preconditioning. The same scaling for the condition number holds for graph Laplacians constructed from random geometric graphs, due to the spectral convergence results in, for example, \cite{calder2019improved}, but now $\Delta x$ should be replaced by the length scale $\eps>0$ on which the graph is constructed. In dimension $d$, the length scale is at least $\eps \geq n^{-1/d}$, to ensure graph connectivity. This implies that as a function of $n$ and $d$, the condition number of the graph Laplacian should scale like $n^{-2/d}$. Thus, the curse of dimensionality \emph{improves} the condition number of graph Laplacians in higher dimensions, and explains why iterative methods perform better for larger $d$. The stiffness of Laplace equations is essentially a phenomenon of high resolution meshes, which are only possible to construct in low dimensional spaces (e.g., $d=2,3$). By the manifold assumption in machine learning \cite{ssl}, we expect our graphs to have intrinsic dimensionality significantly higher than $d=3$, and so the graph Laplacians we encounter in practice are better conditioned than what we may expect from experience with solving PDEs in $d=2$ or $d=3$ dimensions. 
\label{rem:condition}
\end{remark}

\begin{figure}
\centering
\hspace{-1cm}
\subfloat[Newton]{\includegraphics[width=0.51\textwidth,clip = true, trim = 10 65 70 100]{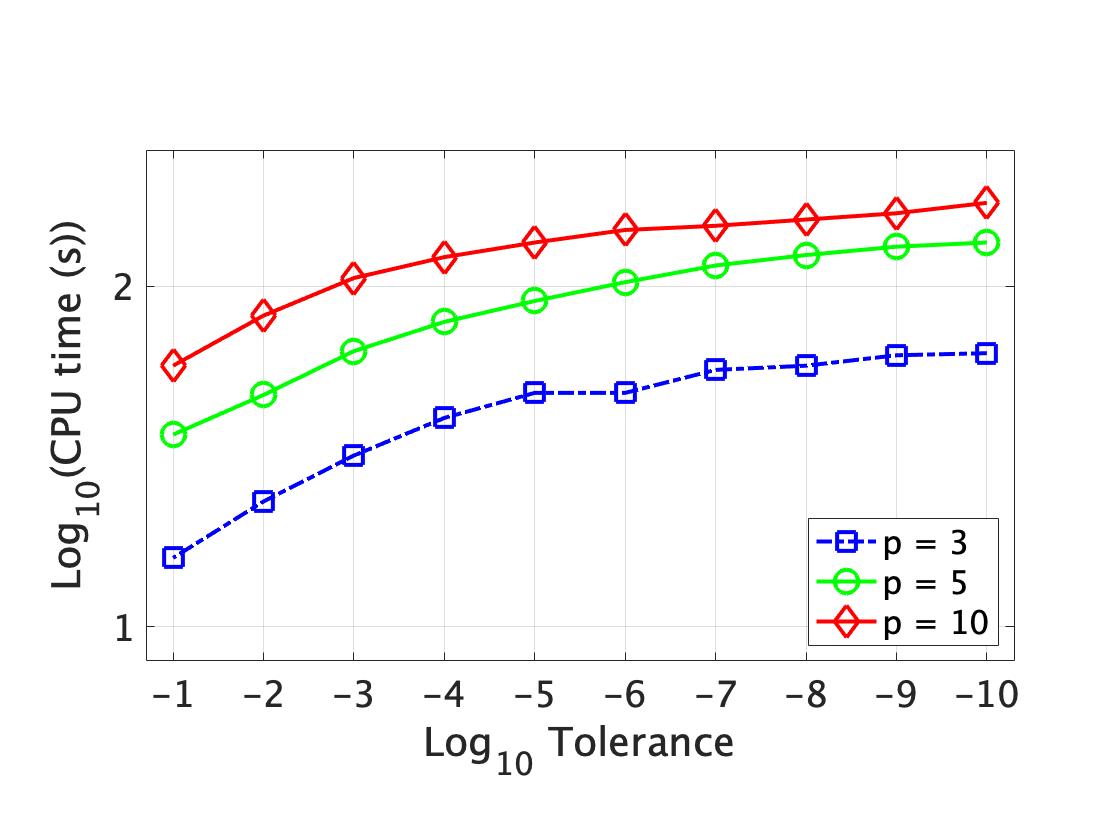}}
\subfloat[Semi-implicit]{\includegraphics[width=0.51\textwidth,clip = true, trim = 10 65 70 100]{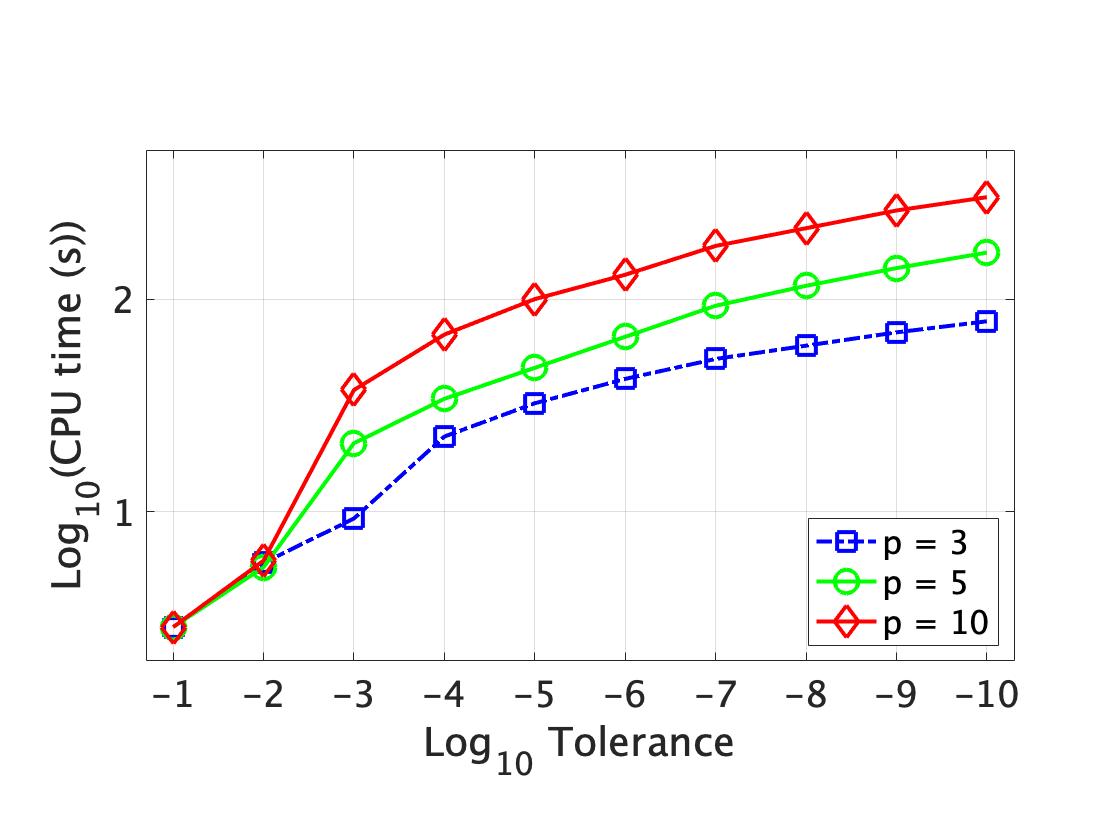}}\\
\caption{Solving problem S at large scale with $n = 5 \cdot 10^5$, $p = 5$ and $d = 10$. \\
(a) Newton's cost for solving the variational formulation, using preconditioned CG.
(b) Semi-implicit's cost for the game-theoretic problem, with preconditioned CG.}
\label{fig:large_scale_new} 
\end{figure}

\subsection{Synthetic Experiment Conclusions}

The experiments shown demonstrate that both variational and game-theoretic formulations can be solved with the algorithms we propose for large scale graphs with high intrinsic dimensionality, similar to what will be observed in practice. For the game-theoretic problem, the semi-implicit formulation is likely to perform the best, although it lacks the convergence guarantees enjoyed by the gradient descent method. The Newton and Newton-like algorithms show strong resemblance to one another, and both methods improve their performance and reliability substantially when homotopy is utilized. 

\section{Experiments with Real Data}
\label{sec:real}

We now give an experimental study of $p$-Laplacian semi-supervised learning classification on real datasets, including MNIST, and the more complex datasets Fashion MNIST and Extended MNIST. 

\subsection{Description of the Experiments and Datasets}

On each dataset, we shall perform two experiments. For the first, we explore the performance as the number of labels varies, with the goal of understanding how different models perform with increasing, but very small, amounts of labeled data. For the second experiment, we fix the number of labels to $1$ per class, and explore the performance as a function of the amount of unlabeled data. This experiment helps to illustrate the degeneracy of the $2$-Laplacian, while also illustrating how other models can profit from an increasing amount of unlabeled data, which is the premise upon which semi-supervised learning is built. Before proceeding to the results, we provide some detail on the datasets we work with.

In all three problems, we first preprocess each image via a Scattering Transform \cite{bruna2013invariant} in order to extract features upon which to build the graph. The graphs are constructed as $K$-nearest neighbor graphs in the feature space using the weights \eqref{eq:weights_definition}. In all experiments we chose $K = 10$, to ensure graph connectivity and sufficient sparsity. We also experimented with $K = 25$ and $K = 50$ and found that increasing the number of neighbors in the graph does not provide any advantages, and results in significantly higher computational cost. 

We solve the $M$-class classification problem with the standard one-vs-rest approach, whereby we solve $M$ binary classification problems classifying each digit against the rest. This gives $M$ scores for each unlabeled image, and the final classification is chosen as the class with the highest score.

\subsubsection{MNIST}

The MNIST dataset is a standard benchmark that consists of $70,000$ images of handwritten digits $0$ through $9$ \cite{lecun1998gradient}. Each image is a $28 \times 28$ pixel grayscale image, meaning it can be represented as a vector in $\R^{784}$ dimensions. The classes are well balanced, with roughly $7,000$ examples for each digit.

\subsubsection{Fashion MNIST}

This dataset, recently introduced by \cite{xiao2017fashion}, was designed as a drop-in replacement for MNIST, in order to test classification accuracy for a significantly harder problem. This dataset also consists of $70,000$ grayscale images of size $28\times 28$ pixels, except the $10$ handwritten digits are replaced by $10$ classes of clothing items, such as sandals, and dresses. For an example, see Figure \ref{fig:mnist_fashion_sample}.

\begin{figure} \begin{center} 
\includegraphics[width = 0.6\textwidth]{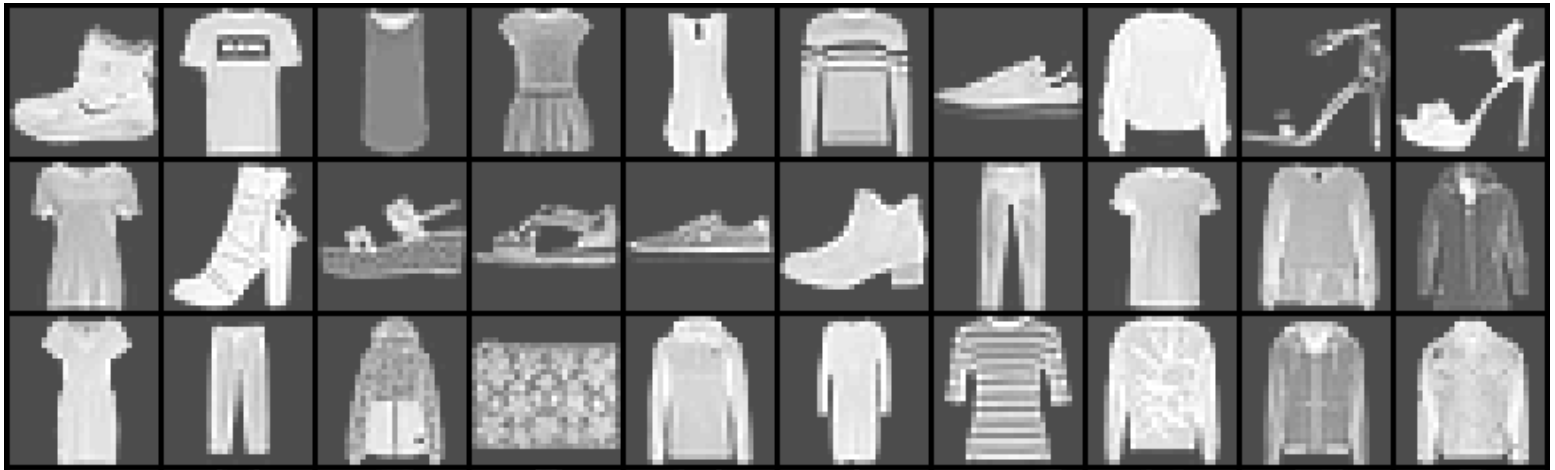}
\caption{Sample $28 \times 28$-pixel images from the Fashion MNIST dataset. \label{fig:mnist_fashion_sample}}
\end{center} \end{figure}

\subsubsection{Extended MNIST}

The Extended MNIST (EMNIST) dataset \cite{cohen2017emnist} contains images of handwritten letters, of $28 \times 28$ pixels. Given that letters may be uppercase or lowercase, and there are now $26$ classes, the semisupervised task on this dataset is substantially more challenging. To simplify the task slightly, we downsample the dataset for the first experiment (performance vs.~number of labels) so that all classes have the same number of labels. We end up with $3419$ samples for each of the $26$ classes, for a total of $n = 88,894$ images. For the second task, we measure performance with subsampled datasets of $2^5, 2^6, \dots, 2^{11}$ samples per class, always having $1$ labeled sample. 

\subsection{Experimental Results}

We report the results of our experiments in Figure \ref{fig:mnist_results}. In all experiments, we report results using $5$ formulations: the standard $2$-Laplacian, the Weighted Non-local Laplacian (WNLL) \cite{shi2017weighted}, and the game-theoretic $p$-Laplacian for $p = 5$, $p = 9$ and $p = \infty$ (e.g. Lipschitz learning). All models were solved to a tolerance of $\ep < 10^{-2}$, which provides consistent results. The results we report paint a similar picture across all three datasets. For the first type of experiment (left column), all formulations improve their accuracy as the number of labels increases, and they gradually approach a similar level as $m$ grows. This means that the choice of a good model is of particular importance in the regime when labeled data is extremely limited. Both the $p$-Laplace and WNLL methods clearly outperform the $2$-Laplacian at low label rates, while the WNLL and $p$-Laplacian give fairly similar results.

The second experiment (right column) provides further evidence of the superiority of the $p$-Laplacian model over the $2$-Laplacian. In this case we fix $1$ label per class, which means $m = 10$ for MNIST and Fashion MNIST, while $m = 26$ for EMNIST. The premise of semi-supervised is that we may achieve superior performance by including both labeled and unlabeled data. If we keep $m$ fixed, we should expect the accuracy to improve as the number of unlabeled images increases. In all three datasets, the $2$-Laplacian classification performance decreases substantially as the amount of unlabeled data $n$ grows. This illustrates the need for alternative models when $n \gg m$. On the other hand, the $p$-Laplacian models (for sufficiently large $p$) do not degenerate as $n$ grows, and in fact, their performance increases slightly as $n$ grows. The Weighted Non-local Laplacian exhibits mixed results. For the MNIST and Fashion MNIST datasets, performance decreases slightly when $n = 70,000$. Meanwhile, this model outperforms others on the EMNIST dataset.

Let us make some final remarks comparing and contrasting WNLL and $p$-Laplace learning. The experimental results in Figure \ref{fig:mnist_results} show that neither method is strictly better than the other. For the most part, the methods are comparable. The largest difference is seen at 1 label per class on MNIST where $p$-Laplace is roughly 10\% better than WNLL. The difference is much more pronounced in terms of the theoretical guarantees for each method. It was shown in \cite{calder2018game} (see \cite{slepcev2019analysis} for the variational $p$-Laplacian) that $p$-Laplace learning is well-posed with arbitrarily few labeled examples; in fact, we can take the number of labeled examples to be finite while sending the number of unlabeled examples to infinity, and still obtain a well-posed continuous extension of the label values. On the other hand, the WNLL was shown in \cite{calder2019properly} to be \emph{ill-posed} in the same setting of finite labeled data and infinite unlabeled data. In particular, \cite[Corollary 3.8]{calder2019properly} shows that the WNLL with finite labeled data converges to a constant labeling function as the amount of unlabeled data tends to infinity. To the best of our knowledge, the only semi-supervised learning algorithm with theoretical guarantees at arbitrarily low label rates, such as the ones given in \cite{calder2018game,slepcev2019analysis}, is the graph $p$-Laplacian.

\begin{figure}
\centering
\hspace{-1cm}
\subfloat[MNIST, $n = 70000, 10 \leq m \leq 50$.\label{fig:mnist_resultsA} ]
{\includegraphics[width=0.51\textwidth,clip = true, trim = 10 50 70 100]{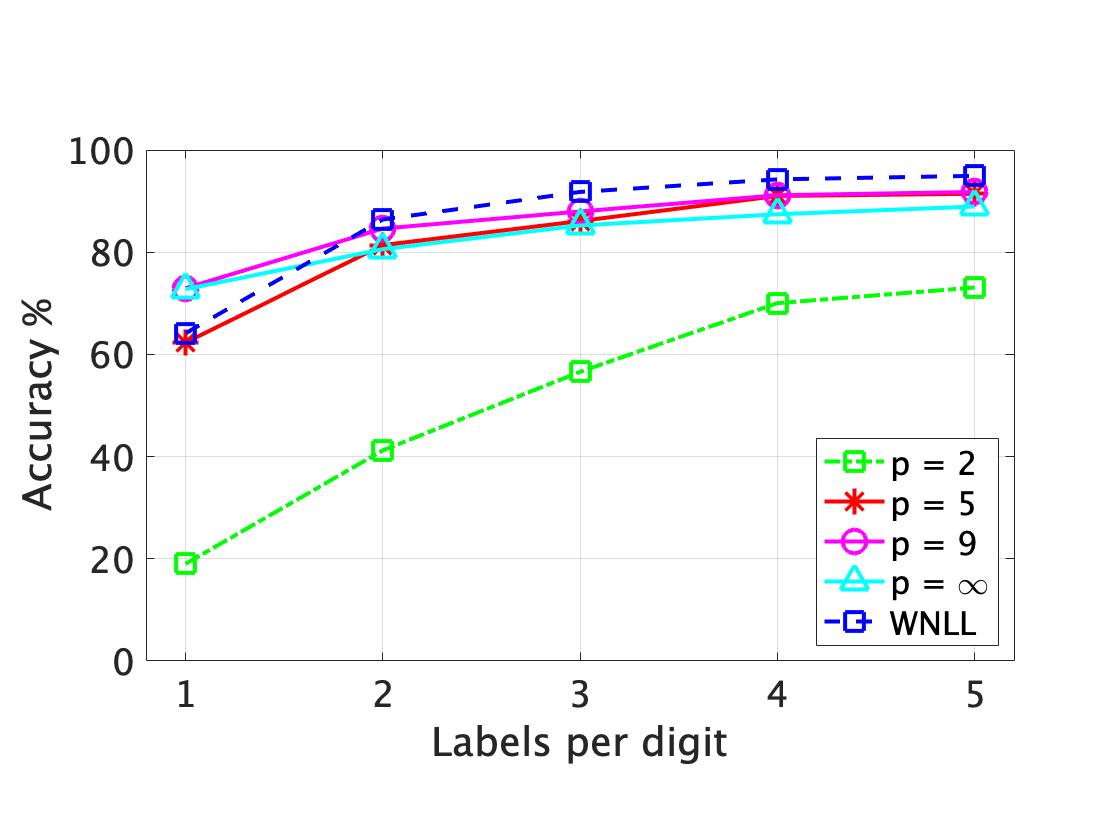}}
\subfloat[MNIST, $m = 10, 2188 \leq n \leq 70000$. \label{fig:mnist_resultsB} ]
{\includegraphics[width=0.51\textwidth,clip = true, trim = 10 50 70 100]{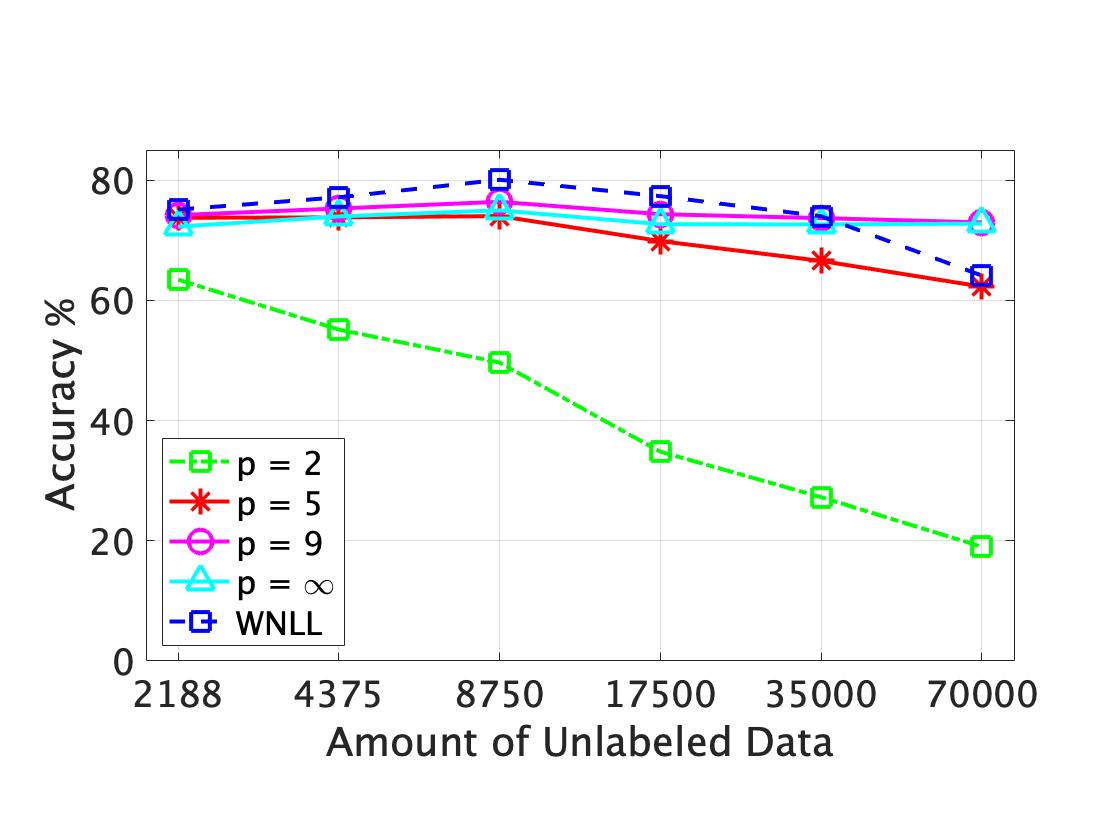}}\\
\vspace{-0.3cm}\hspace{-1cm}
\subfloat[Fashion MNIST, $n = 70000, 10 \leq m \leq 50$. \label{fig:mnist_resultsC} ]
{\includegraphics[width=0.51\textwidth,clip = true, trim = 10 50 70 100]{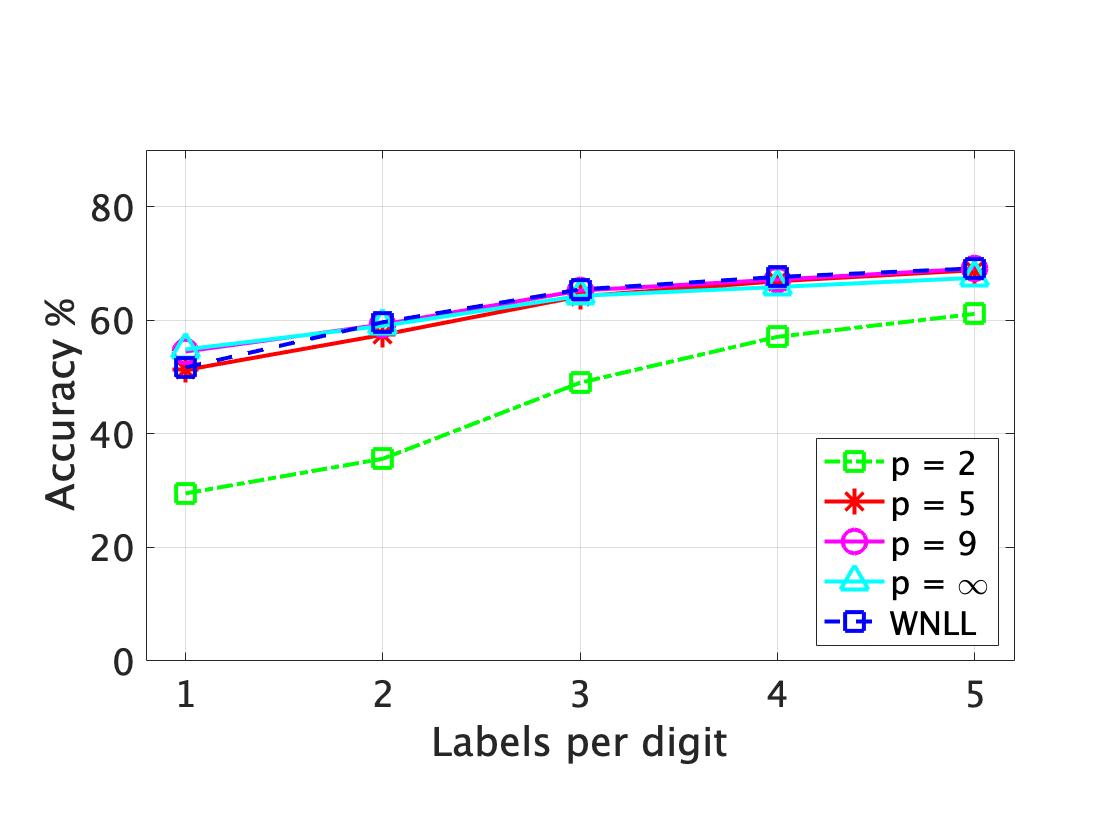}}
\subfloat[Fashion MNIST, $m = 10, 2188 \leq n \leq 70000$. \label{fig:mnist_resultsD} ]
{\includegraphics[width=0.51\textwidth,clip = true, trim = 10 50 70 100]{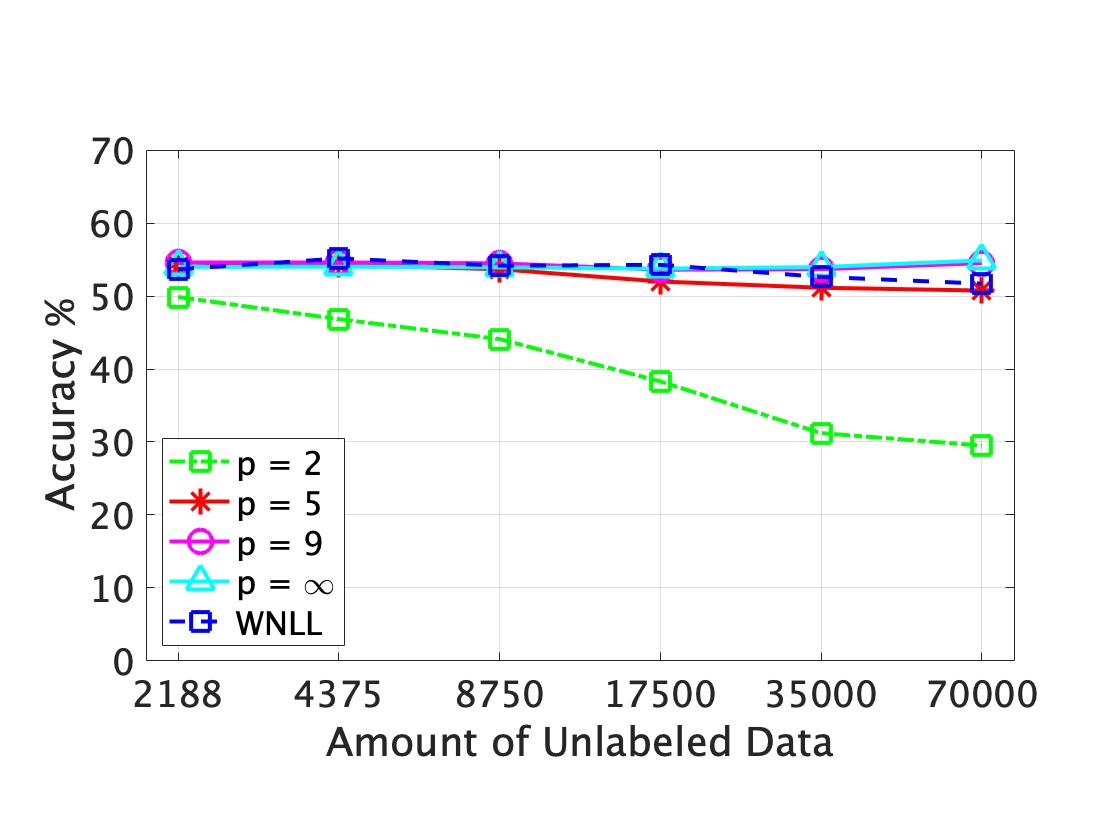}}\\
\vspace{-0.3cm}\hspace{-1cm}
\subfloat[EMNIST, $n = 88894, 10 \leq m \leq 50$.\label{fig:emnist_resultsA} ]
{\includegraphics[width=0.51\textwidth,clip = true, trim = 10 50 70 100]{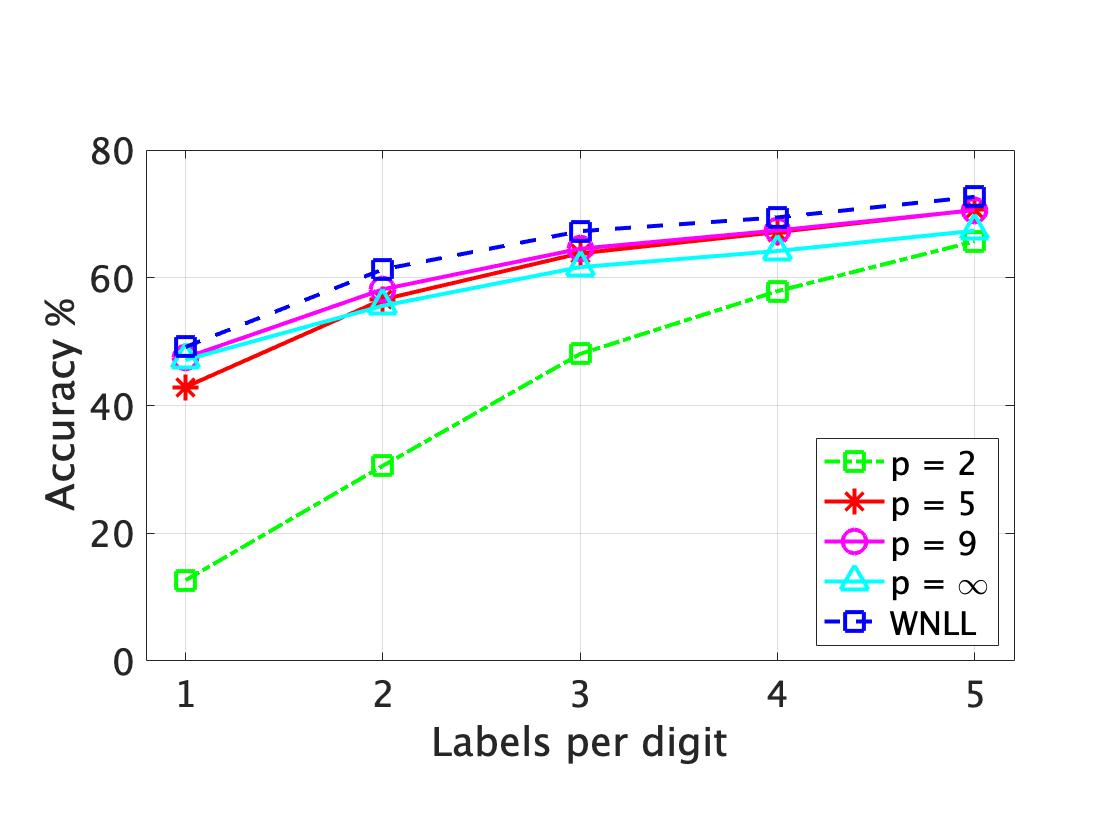}}
\subfloat[EMNIST, $m = 26, 832 \leq n \leq 53248$. \label{fig:emnist_resultsB} ]
{\includegraphics[width=0.51\textwidth,clip = true, trim = 10 50 70 100]{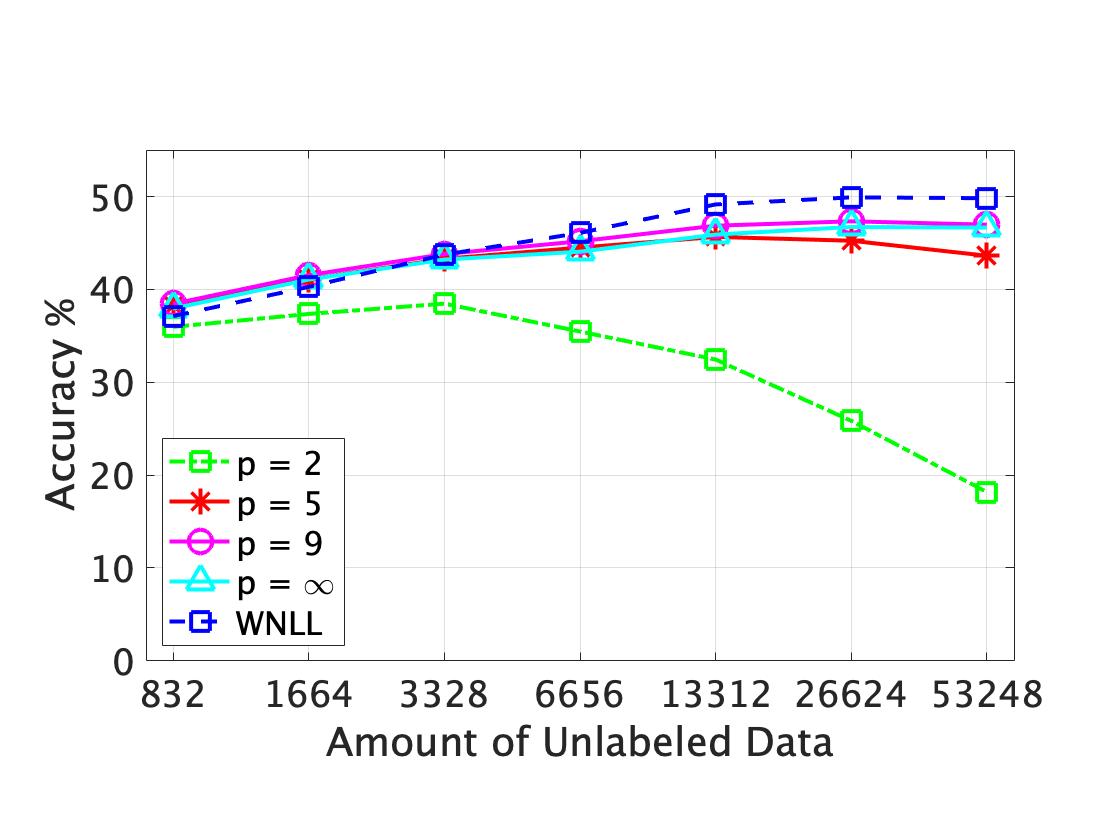}}\\
\caption{Classification accuracy for the MNIST, Fashion MNIST and Extended MNIST datasets. Figures (a,c,e) show the performance as a function of the number of labels per class, while (b,d,f) show the performance for 1 label per class as a function of the amount of unlabeled data used.}
\label{fig:mnist_results} 
\end{figure}

\section{Conclusions\label{sec:conclusions}}

This paper is focused on theory and applications of $\ell_p$-Laplacian regularized semi-supervised learning. We preformed a detailed analysis of discrete to continuum theory for the $p$-Laplacian on $k$-NN graphs, which are more commonly used in practice compared to random geometric graphs, and made the surprising discovery that the $p$-Laplacian models retain information about the data distribution as $p\to \infty$ on $k$-NN graphs, contrary to conventional wisdom from the existing $p$-Laplace theory on random geometric graphs. We also presented a simple and very general framework for proving discrete to continuum convergence results that only requires pointwise consistency and a monotonicity property. We expect this framework to be useful in future work.

We also studied and developed algorithms for solving the variational and game-theoretic formulations of the $p$-Laplacian on a weighted graph. The variational formulation may be solved efficiently using Newton's method with homotopy. The semi-implicit method is the fastest method for solving the game-theoretic formulation, while the gradient-descent approach enjoys rigorous convergence guarantees. Our experiments with real data show that $p$-Laplacian learning is superior to Laplace learning ($p=2$) at very low label rates on common image classification datasets including MNIST, FashionMNIST and EMNIST.

\appendix

\section{Review of viscosity solutions}
\label{sec:viscosity}

Viscosity solutions are a notion of weak solution for partial differential equations that obeys the maximum principle and enjoys strong stability and uniqueness theorems. The theory is especially useful for passing from discrete to continuum limits (see, e.g., \cite{calderViscosity,calder2018limit,calder2014hamilton}). We review here the basic definitions. Let $\usc(\bar{\Omega})$ (resp.~$\lsc(\bar{\Omega})$) denote the collection of functions that are upper (resp.~lower) semicontinuous at all points in $\bar{\Omega}$. We make the following definitions.
\begin{definition}\label{def:visc_bc}
We say $u \in \usc(\bar{\Omega})$ is a \emph{viscosity subsolution} of \eqref{eq:bvp} if for all $x \in \bar{\Omega}$ and every $\phi \in C^\infty(\R^n)$ such that $u-\phi$ has a local maximum at $x$ with respect to $\bar{\Omega}$
\[\begin{cases}
F(\nabla ^2u(x),\nabla \phi(x),u(x),x) \leq 0,&\text{if } x \in \Omega\\
\min\left\{ F(\nabla ^2\phi(x),\nabla \phi(x),u(x),x), u(x)-g(x)\right\} \leq 0&\text{if }x \in \partial \Omega.
\end{cases}\]

Likewise, we say that $u \in \lsc(\bar{\Omega})$ is a \emph{viscosity supersolution} of \eqref{eq:bvp} if for all $x \in \bar{\Omega}$ and every $\phi \in C^\infty(\R^n)$ such that $u-\phi$ has a local minimum at $x$ with respect to $\bar{\Omega}$
\[\begin{cases}
F(\nabla ^2u(x),\nabla \phi(x),u(x),x) \geq 0,&\text{if } x \in \Omega\\
\max\left\{ F(\nabla ^2\phi(x),\nabla \phi(x),u(x),x), u(x)-g(x)\right\} \geq 0&\text{if }x \in \partial \Omega.
\end{cases}\]

Finally, we say that $u$ is a \emph{viscosity solution} of \eqref{eq:bvp} if $u$ is both a viscosity sub- and supersolution. In this case, we say that the boundary conditions in \eqref{eq:bvp} hold in the \emph{viscosity sense}, which is also known as the generalized Dirichlet sense.
\end{definition}

\begin{definition}
We say that \eqref{eq:bvp} enjoys \emph{strong uniqueness} if whenever $u\in \usc(\bar{\Omega})$ is a subsolution of \eqref{eq:bvp} and $v\in \lsc(\bar{\Omega})$ is a supersolution, we have $u\leq v$ on $\bar{\Omega}$.
\label{def:SU}
\end{definition}

We refer the reader to \cite{crandall1992user} for the proof of the comparison principle for viscosity solutions with generalized Dirichlet boundary conditions, which implies strong uniqueness. 

\bibliographystyle{abbrv}
\bibliography{ref}

\end{document}